\RequirePackage[l2tabu, orthodox]{nag}
\documentclass[11pt]{article}%

\usepackage{amssymb}
\usepackage{amsfonts}
\usepackage{amsmath}
\usepackage{mathtools}
\usepackage{dsfont}
\usepackage{mathrsfs}

\usepackage{amsthm}
\usepackage{hyperref}

\usepackage{float}
\usepackage{graphicx}
\usepackage{wrapfig}
\usepackage{subfig}
\usepackage{fancybox}
\usepackage{framed}
\usepackage[usenames,dvipsnames,svgnames,table]{xcolor}
\usepackage{esint}
\usepackage{caption}
\usepackage{epstopdf} 
\usepackage[all]{xy} 
\usepackage{tikz} 
\usepackage{tabularx}
\usepackage{afterpage}
\usepackage{arydshln}

\usepackage{xparse}

\usepackage[titletoc, title]{appendix} 
\usepackage{titling} 
\usepackage{url} 
\usepackage{color}
\usepackage{enumerate, multirow, longtable} 

\usepackage[latin1]{inputenc}
\usepackage[
    a4paper,
    hmargin=2cm,
    top=1.85cm,
    bottom=3.85cm
]{geometry}
\usepackage{microtype}

\newtheorem{theo}{Theorem}[section]

\newtheorem{lem}[theo]{Lemma}
\newtheorem{cor}[theo]{Corollary}

\newtheorem{rem}[theo]{Remark}
\newtheorem{defi}[theo]{Definition}

\numberwithin{equation}{section}

\newcommand{\sts}{\omega} 

\newcommand{\hide}[1]{}

\newcommand{\mb}[1]{\mathbb{ #1 }}
\newcommand{\mbf}[1]{\mathbf{ #1 }}
\newcommand{\mc}[1]{\mathcal{ #1 }}
\newcommand{\mr}[1]{\mathrm{ #1 }}

\newcommand{\dd}{\mathrm{d}}

\newcommand{\ind}{\mathbf{1}}

\newcommand{\eps}{\varepsilon}

\NewDocumentCommand{\Var}{o}{%
  \mathfrak{C}_{\sts\IfValueT{#1}{, #1}}%
}
\NewDocumentCommand{\var}{o}{%
  \mathfrak{c}_{\sts\IfValueT{#1}{, #1}}%
}

\title{Power variations of critical Gaussian multiplicative chaos \\ and their spectral applications}
\author{Mo Dick Wong\thanks{Department of Mathematics, The University of Hong Kong}}
\date{\today}
\begin{document}
\maketitle

\abstract{
We introduce power variations of critical Gaussian multiplicative chaos (GMC) along refining partitions in arbitrary dimension.
Under suitable renormalisation,
we prove uniform fractional moment bounds via Laplace transform estimates as well as stable convergence to supercritical GMCs.
Together, these results provide a unified approach to regularity and spectral questions for critical chaos, 
yielding:
(i) the sharp exponent for the logarithmic modulus of continuity,
(ii) the existence of non-empty essential spectrum for critical Liouville quantum gravity surfaces; and
(iii) an almost-sure quantitative Fourier decay.
}
\section{Introduction}
\subsection{Setup and main results}
Gaussian multiplicative chaos (GMC) is a family of random measures formally given by $e^{\gamma X(x)} \dd x$
where $\gamma > 0$ is an intermittency parameter and $X(\cdot)$ is a log-correlated Gaussian field on some domain $D \subset \mb{R}^d$.
First constructed in Kahane's seminal work \cite{Kah1985},
the theory of GMC was originally motivated by a turbulence model of Kolmogorov-Obukhov-Mandelbrot \cite{Man1971},
but since then has played an indispensable role in random planar geometry and Liouville quantum gravity \cite{DS2011,DKRV2016,BE2026,CKRV2024},
and in recent years it has also found applications in random matrix theory \cite{CFLW2021,KW2022,NPS2023}
as well as connections to number theory \cite{SW2020,Har2020,Har2024}.

The theory of GMC exhibits three distinct regimes,
namely the subcritical ($\gamma \in (0, \sqrt{2d})$), critical ($\gamma = \sqrt{2d}$) and supercritical ($\gamma > \sqrt{2d}$) phases,
each characterised by different renormalisations and markedly different limiting behaviour.
Unlike the subcritical and critical cases in which convergence under suitable renormalisation may be established in probability
(see e.g. \cite{Ber2017,Pow2021,Lac2024}),
the construction of supercritical GMC only holds at the level of stable convergence in distribution
due to the dominant contribution coming from rare extreme values of the underlying field $X(\cdot)$.
Crucially, this resulting limit measure is purely atomic and given by a Poisson random measure whose conditional spatial intensity is proportional to the critical GMC,
a manifestation of the freezing phenomenon in the literature on branching processes and log-correlated fields \cite{MRV2016,BH2025a,BH2025b}.

The purpose of this article is to establish another natural link between critical and supercritical GMCs via power variations.
For simplicity, we restrict ourselves to the case where $X$ is a $\star$-scale invariant Gaussian field on $\mb{R}^d$,
and we denote by $(X_t)_{t \ge 0}$ its martingale decomposition and $(\mc{F}_t)_{t \ge 0}$ the natural filtration (see Definition \ref{def:star-scale-invariant} for details).
The associated chaos at criticality $\gamma = \gamma_c = \sqrt{2d}$ can then be defined via the Seneta-Heyde normalisation
\begin{align*}
M(\dd x) := \lim_{t \to \infty} \sqrt{t} e^{\gamma_c X_t(x) - dt}\dd x
= \lim_{t \to \infty} M_t(\dd x)
\end{align*}

\noindent where the convergence holds in probability with respect to the weak$^*$ topology.
For any $\beta > 1$, $n \in \mb{N}$ and non-negative function $f$,
the $\beta$-variation on the unit cube $Q:=(0,1)^d$ at scale $n$ is defined as
\begin{align}
V_n^{\beta}(f) := \sum_{\mbf{j} \in \mc{J}_{n}} M(Q_{n, \mbf{j}})^{\beta} f(x_{n, \mbf{j}})
\end{align}

\noindent where
\begin{align*}
Q_{\mbf{j}} := \mbf{j}  + (0, 1)^d,
\qquad Q_{n, \mbf{j}} :=  n^{-1}Q_\mbf{j},
\qquad \mc{J}_{n} := \{0, \dots, n-1\}^d
\end{align*}

\noindent and $x_{n, \mbf{j}}$ is the centre of $Q_{n, \mbf{j}}$.
Abusing the notation, we write $V_n^{\beta}(Q)$ when $f|_Q \equiv 1$. 
The shorthand
\begin{align*}
\ell(n) := \log n,
\qquad \widetilde{V}_n^{\beta}(f) := \ell(n)^{\frac{\beta}{2}}V_n^{\beta}(f)
\qquad \text{and} \qquad \widetilde{V}_n^\beta(Q) :=  \ell(n)^{\frac{\beta}{2}}V_n^{\beta}(Q)
\end{align*}

\noindent for renormalised $\beta$-variations is also used.

We study the asymptotic behaviour of $V_n^{\beta}(f)$ as $n \to \infty$ through moment estimates and limiting distributions.
Our first result is the following bound on the Laplace transform:
\begin{theo}\label{thm:uniform-integrability}
For any $\beta > 1$ and $r \in (0, 1/\beta)$, there exists a constant $C = C(\beta, r) \in (0, \infty)$ such that 
\begin{align*}
\sup_{n \ge 1} \mb{E} \left[ 1 - \exp\left\{-q\widetilde{V}_n^\beta(Q)\right\}\right] \le C q^{r}
\qquad \forall q \in [0,1/2].
\end{align*}
\end{theo}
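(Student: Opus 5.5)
The strategy is to reduce the Laplace transform bound to a uniform tail estimate $\mb{P}(\widetilde V_n^\beta(Q) > u) \le C u^{-r}$ for $u \ge 1$. Once this holds, the elementary inequality $1-e^{-x} \le \min(1,x)$ gives
\begin{align*}
\mb{E}\bigl[1-e^{-q\widetilde V_n^\beta(Q)}\bigr] \le \mb{P}(\widetilde V_n^\beta(Q) > q^{-1}) + q\,\mb{E}\bigl[\widetilde V_n^\beta(Q)\ind_{\{\widetilde V_n^\beta(Q)\le q^{-1}\}}\bigr],
\end{align*}
and the truncated first moment is at most $\int_0^{1/q} \mb{P}(\widetilde V_n^\beta > u)\,\dd u \lesssim 1 + q^{r-1}$ because $r<1$. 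This yields the bound $Cq^r$. Equivalently, it suffices to bound $\mb{E}[(\widetilde V_n^\beta(Q))^{s}]$ uniformly in $n$ for some $s\in(r,1/\beta)$, since the threshold $1/\beta$ is exactly where such fractional moments stop being available.

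To get the fractional moment, first use subadditivity of $x\mapsto x^{s}$ for $s\le 1$:
\begin{align*}
\mb{E}\bigl[(\widetilde V_n^\beta(Q))^{s}\bigr] \le \ell(n)^{\beta s/2}\sum_{\mbf{j}\in\mc{J}_n}\mb{E}\bigl[M(Q_{n,\mbf{j}})^{\beta s}\bigr].
\end{align*}
Set $p=\beta s<1$. The task is then to show $\mb{E}[M(Q_{n,\mbf j})^{p}] \lesssim n^{-d}\ell(n)^{-p/2}$, which gives a bound uniform in $n$. For this, decompose via $\star$-scale invariance at $t=\log n$. Conditionally on $\mc F_t$, we have $M(Q_{n,\mbf j}) \approx \sqrt{t}\,e^{\gamma_c X_t(x)-dt}\cdot\frac{1}{\sqrt t}\cdot n^{-d}\widehat M(Q)\cdot\sqrt{\cdot}$-type corrections. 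More precisely, $M(Q_{n,\mbf j}) \approx n^{-d} e^{\gamma_c X_t(x_{n,\mbf j}) - dt}\,\widehat M^{(\mbf j)}(Q)$, where $\widehat M$ is an independent copy of the critical chaos and the smooth oscillation of $X_t$ on $Q_{n,\mbf j}$ is controlled by a bounded-variance supremum. Since critical GMC of a unit cube has all moments of order $p<1$, this gives
\begin{align*}
\mb{E}[M(Q_{n,\mbf j})^{p}] \lesssim n^{-dp}\,\mb{E}\bigl[e^{p\gamma_c X_t - pdt}\bigr] = n^{-dp} e^{(p^2\gamma_c^2/2 - pd)t} = n^{-dp + dp^2 - dp}.
\end{align*}
This naive bound is far from $n^{-d}\ell(n)^{-p/2}$. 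The crude annealed computation fails because for $p<1$ the $p$-th moment is dominated by atypically high values of $X_t$, and those values are exactly excluded by the barrier/Seneta–Heyde structure at criticality.

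The main obstacle is therefore to incorporate the barrier. I would first write $M=\lim \sqrt{t}\,e^{\gamma_c X_t-dt}$ restricted to the event that $\sup_{u\le t}(X_u(x)-\gamma_c u)\le \alpha$. On the complement, the total critical mass is small in probability uniformly; on this event, the chaos equals the derivative-martingale-type measure $M^{\alpha}$, and $M = M^\alpha$ with probability tending to $1$ as $\alpha\to\infty$. Because the target is a Laplace transform (a bounded functional), the complement only costs $\mb{P}(M\ne M^\alpha)$. This can be made small, but not polynomially small in $q$, so I would instead choose $\alpha$ depending on $q$, say $\alpha = \log(1/q)$. I would then use the Gaussian estimate $\mb{P}(\sup_u (X_u-\gamma_c u)>\alpha)\lesssim e^{-c\alpha}$ for the rooted maximum, i.e. the right tail of the maximum of a BRW-type field, which is exponential with rate $\gamma_c$.

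On the barrier event, the cell contribution is computed using Girsanov with $X_t(x)\mapsto X_t(x)+\gamma_c t$, which turns the process into a Brownian motion below $\alpha$. Ballot-type estimates then give $\mb{E}[\ell(n)^{p/2} M^{\alpha}(Q_{n,\mbf j})^{p}]$ with the correct entropy. Specifically, the Brownian endpoint $-y$ below the barrier has density $\asymp \alpha(y+\alpha)t^{-3/2}e^{-y^2/2t}$. The Girsanov weight for moment $p$ produces a factor $e^{-(1-p)\gamma_c y}$ times $t^{p/2}$ from the $\sqrt t$ normalisation, and the sum over $n^d$ cells cancels the $e^{-dt}$ prefactor. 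Integrating in $y$ gives a bound $\lesssim \alpha^{C}t^{-3/2}t^{p/2}\ell(n)^{\beta s/2}$. Some exponent bookkeeping is required here, and this is where $p=\beta s<1$ together with the $\ell(n)^{\beta/2}$ renormalisation must match exactly, producing a bound polynomial in $\alpha$. Combining $\alpha^C$ with the tail $e^{-c\alpha}$ and optimising in $\alpha=C'\log(1/q)$ then yields $Cq^{r}$ for every $r<1/\beta$. The delicate step is this ballot/Girsanov computation with the correct powers of $\ell(n)$, uniformly in $n$.
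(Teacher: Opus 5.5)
There is a genuine gap. Your reduction to a uniform fractional moment bound $\sup_n\mb{E}[\widetilde V_n^\beta(Q)^s]<\infty$ is fine, but the next step cannot work. Once you apply subadditivity of $x\mapsto x^{s}$ cell by cell, you need $\sum_{\mbf{j}}\mb{E}[M(Q_{n,\mbf{j}})^{p}]\lesssim\ell(n)^{-p/2}$ with $p=\beta s<1$. This is false by a polynomial factor.

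To see this, write $M(Q_{n,\mbf{j}})\ge(\inf_{Q_{n,\mbf{j}}}W_{\ell(n)})\,M^{(\ell(n))}(Q_{n,\mbf{j}})$, where $M^{(\ell(n))}(Q_{n,\mbf{j}})$ is independent of $\mc{F}_{\ell(n)}$ and distributed as $n^{-d}M(Q)$. Tilt by $W_{\ell(n)}(x_{n,\mbf{j}})^{p}$ and control the oscillation of $X_{\ell(n)}$ over the cell as in Lemma \ref{lem:lgf-oscillations}. This gives $\mb{E}[M(Q_{n,\mbf{j}})^{p}]\gtrsim n^{-dp}\,\mb{E}[W_{\ell(n)}(0)^{p}]=n^{-d(2p-p^{2})}$. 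So the bound of Lemma \ref{lem:moment-criticalGMC} is attained, and the sum over the $n^{d}$ cells grows like $n^{d(1-p)^{2}}$.

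A barrier does not help. Under the $p$-tilt the process $u\mapsto X_u(x)$ has drift $p\gamma_c<\gamma_c$, so it stays below $\gamma_c u+\alpha$ with probability at least $1-e^{-2(1-p)\gamma_c\alpha}$. The paths dominating the $p$-th moment are therefore not atypically high and are not removed. Your ballot computation only appears to close because of a sign error. Writing $X_t=\gamma_c t-y$ after the $p=1$ Girsanov shift, the leftover weight $W_t^{p-1}$ equals $e^{-(1-p)dt}e^{+(1-p)\gamma_c y}$, not $e^{-(1-p)\gamma_c y}$. Against the density $e^{-y^{2}/(2t)}$ the $y$-integral is then dominated by $y\approx(1-p)\gamma_c t$, where the ballot factor is of order one, and it reproduces exactly the divergent factor $n^{d(1-p)^{2}}$.

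The missing idea is that the fractional power must be taken only after the cells have been summed inside a conditional expectation given $\mc{F}_{\ell(n)}$. Conditionally, each cell mass is $e^{\gamma_c H_n(\mbf{j})-2d\ell(n)}\widetilde{M}_{n,\mbf{j}}$ with $\mb{P}(\widetilde{M}_{n,\mbf{j}}>u\mid\mc{F}_{\ell(n)})\le C_M e^{\gamma_c H^*_{n,\mbf{j}}}/u$. Lemma \ref{lem:laplace-estimate} then gives $\mb{E}[Y_{n,\mbf{j}}\mid\mc{F}_{\ell(n)}]\lesssim q^{\alpha}e^{2\gamma_c H^*_{n,\mbf{j}}}M_{\ell(n)}(Q_{n,\mbf{j}})$. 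The $u^{-1}$ tail of the fine critical mass turns the $\beta$-th power into a quantity \emph{linear} in the coarse weight, and $(\ell(n)^{\beta/2})^{\alpha}=\ell(n)^{1/2}$ is exactly the Seneta--Heyde factor.

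Combining $1-\prod_{\mbf{j}}(1-Y_{n,\mbf{j}})\le\min(1,\sum_{\mbf{j}}Y_{n,\mbf{j}})$ with conditional Jensen for $x\mapsto\min(1,x)$ yields $\mb{E}[1-e^{-q\widetilde{V}_n^\beta(Q)}]\le\mb{E}[\min(1,\lambda_n)]\le\mb{E}[\lambda_n^{r/\alpha}]$. Finally, $\lambda_n\lesssim q^{\alpha}[M_{\ell(n)}(Q)+\widetilde{\mc{E}}_n(1,2\gamma_c)]$ has uniformly bounded moments of order $r/\alpha<1$ by Lemma \ref{lem:SH-moment-uniform} and Corollary \ref{cor:local-fluctuation}. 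No barrier, ballot estimate or $q$-dependent truncation is needed. The delicate input is the uniform control of the local oscillations $H^*_{n,\mbf{j}}$ in Lemma \ref{lem:local-fluctuation}.
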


An immediate consequence is the following integrability result.
\begin{cor}\label{cor:uniform-integrability}
For any fixed $\beta > 1$ and $r \in (0, \beta^{-1})$,
we have $\sup_{n \ge 1} \mb{E}\left[\widetilde{V}_n^\beta(Q)^r \right] < \infty$.
\end{cor}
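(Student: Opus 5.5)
The corollary should follow from Theorem~\ref{thm:uniform-integrability} through the standard integral representation of fractional moments by the Laplace transform. For a non-negative random variable $Y$ and $r \in (0,1)$ we have
\begin{align*}
Y^r = \frac{r}{\Gamma(1-r)} \int_0^\infty \left(1 - e^{-qY}\right) q^{-1-r} \, \dd q ,
\end{align*}
which one checks by substituting $u = qY$ and integrating by parts in $\int_0^\infty (1-e^{-u}) u^{-1-r}\,\dd u = \Gamma(1-r)/r$. Since $\beta > 1$, every $r \in (0, \beta^{-1})$ lies in $(0,1)$, so the identity applies to $Y = \widetilde{V}_n^\beta(Q)$. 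By Tonelli we may take expectations inside the integral:
\begin{align*}
\mb{E}\left[\widetilde{V}_n^\beta(Q)^r\right] = \frac{r}{\Gamma(1-r)} \int_0^\infty \mb{E}\left[1 - e^{-q\widetilde{V}_n^\beta(Q)}\right] q^{-1-r}\,\dd q .
\end{align*}

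Split the integral at $q = 1/2$. On $[1/2, \infty)$, bound the integrand by $q^{-1-r}$, since $1 - e^{-x} \le 1$. Its integral is finite and does not depend on $n$.

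On $[0,1/2]$, fix $r' \in (r, \beta^{-1})$, which is possible because $r < \beta^{-1}$ strictly. Apply Theorem~\ref{thm:uniform-integrability} with exponent $r'$: uniformly in $n$, the expectation is at most $C(\beta,r') q^{r'}$. The integrand is then bounded by $C q^{r'-r-1}$, which is integrable near $0$ because $r' > r$. Adding the two pieces gives a bound on $\mb{E}[\widetilde{V}_n^\beta(Q)^r]$ that is independent of $n$, and taking the supremum over $n \ge 1$ proves the claim.

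The only point needing care is this choice of a strictly larger exponent $r'$ in the theorem. Applying the theorem with exponent $r$ itself would produce the borderline integrand $q^{-1}$, which diverges at $0$; the gap $r < \beta^{-1}$ is exactly what lets us avoid it.
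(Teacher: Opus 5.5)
Your proposal is correct and matches the paper's proof: the same Laplace-transform representation of $Y^r$ combined with Fubini/Tonelli, the same split of the integral at $q = 1/2$, and the same use of Theorem~\ref{thm:uniform-integrability} with a strictly larger exponent $r' \in (r, \beta^{-1})$ to make the integral near $0$ converge. Your observation that using exponent $r$ itself would leave a non-integrable $q^{-1}$ singularity is exactly why the paper picks $r_0 > r$.
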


\begin{proof}
Choose $r_0 \in (r, 1/\beta)$. Using the formula
\begin{align}\label{eq:moment-via-laplace}
Y^r
= r\int_0^\infty \ind_{s \le Y} ds \int_0^\infty \frac{q^{-r}e^{-qs}}{\Gamma(1-r)}dq
= \frac{r}{\Gamma(1-r)}\int_0^\infty \left(1- e^{-qY} \right) \frac{dq}{q^{r+1}}
\end{align}

\noindent for any $Y \ge 0$ by Fubini's theorem, we conclude by Theorem \ref{thm:uniform-integrability} that
\begin{align*}
\sup_{n \ge 1} \mb{E}\left[\widetilde{V}_n^\beta(Q)^r\right]
\le \frac{r}{\Gamma(1-r)} \left[\int_0^{1/2} C q^{r_0} \frac{dq}{q^{r+1}} + \int_{1/2}^{\infty} \frac{dq}{q^{r+1}} \right] < \infty.
\end{align*}
\end{proof}

Refining the proof of Theorem \ref{thm:uniform-integrability},
we also establish the following distributional convergence.

\begin{theo}\label{thm:critical-to-supercritical}
Let $\beta > 1$ and $\alpha := 1/\beta$.
Suppose $f$ is a non-negative function of the form $f(x) = f_0(x) \ind_D(x)$,
where $D \subset Q$ is an open Jordan measurable set and $f_0 \in C(\overline{D})$.\footnote{
A set $D \subset \mb{R}^d$ is said to be Jordan measurable if its inner and outer Jordan measures coincide,
or equivalently the Lebesgue measure of its boundary $\partial D$ is equal to $0$.
Intuitively, this means the measure of $D$ is well-approximated by cubes of equal side-length.
Inspecting the proof, 
Theorem \ref{thm:critical-to-supercritical} readily extends to test functions $f$ obtained from positive linear combinations of functions of the form $f_0 \ind_D$.
}
Then for any bounded $\mc{F}_\infty$-measurable random variable $U$, we have
\begin{align}\label{eq:check-stable}
\mb{E}\left[U \exp\left(-\widetilde{V}_n^\beta(f)\right)\right]
\xrightarrow[n \to \infty]{} \mb{E}\left[U \exp\left(-\frac{\Gamma(1-\alpha)}{\sqrt{\pi d}}\int_Q f(x)^{\alpha} M(\dd x)\right)\right].
\end{align}

\noindent Equivalently, if
\begin{align*}
\mc{P}_{\alpha}[M](\dd x) := \int_0^{\infty}z \eta_{\alpha}[M](\dd x, \dd z)
\end{align*}

\noindent where $\eta_{\alpha}[M]$ (conditioned on $\mc{F}_\infty$) is a Poisson random measure on $\mb{R}^d \times (0, \infty)$ with intensity measure $M(dx) \otimes z^{-(1+\alpha)} \dd z$,
then we have 
\begin{align}\label{eq:stable-distributional-convergence}
\widetilde{V}_{n}^{\beta}(f) = \ell(n)^{\frac{\beta}{2}} V_{n}^{\beta}(f) 
\xrightarrow[n \to \infty]{(d)}  \left(\frac{\alpha}{\sqrt{\pi d}}\right)^{\beta} \mc{P}_\alpha[M](f)
\end{align}

\noindent where the distributional convergence is stable with respect to $\mc{F}_\infty$,
the sigma-algebra generated by the martingale decomposition of $X$.
\end{theo}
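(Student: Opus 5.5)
The strategy is to condition on an intermediate $\sigma$-algebra $\mc{F}_{t}$, with $t = t_n$ chosen suitably, and to show that the cubes $Q_{n,\mbf{j}}$ then contribute almost independently. Each cube produces a Laplace-transform defect of order $\ell(n)^{-1/2}$ times its critical mass, and summing these defects yields the exponent on the right-hand side of \eqref{eq:check-stable}. The distributional statement \eqref{eq:stable-distributional-convergence} is then automatic. The Laplace functional of $(\alpha/\sqrt{\pi d})^\beta\mc{P}_\alpha[M](f)$ given $\mc{F}_\infty$ is
\begin{align*}
\exp\left(-\int_Q\int_0^\infty (1-e^{-c f(x) z}) z^{-1-\alpha}\dd z\, M(\dd x)\right),
\end{align*}
and the inner integral equals $\Gamma(1-\alpha)\alpha^{-1}(cf(x))^\alpha$. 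With $c=(\alpha/\sqrt{\pi d})^\beta$ this matches \eqref{eq:check-stable}. Convergence of Laplace transforms jointly with bounded $\mc{F}_\infty$-measurable $U$ gives stable convergence.

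\textbf{Step 1 (reductions).} By approximation I may take $U$ to be $\mc{F}_s$-measurable for a fixed $s$. Tightness, which follows from Theorem \ref{thm:uniform-integrability}, and Jordan measurability of $D$ let me replace $f$ by a function that is constant on the cubes of a fixed coarse mesh. The cubes meeting $\partial D$ carry vanishing mass in the limit, because $M$ has no atoms and $|\partial D|=0$, and Theorem \ref{thm:uniform-integrability} controls their contribution. This reduces everything to proving, for fixed $\lambda>0$,
\begin{align*}
\mb{E}\left[\exp\left(-\lambda\ell(n)^{\beta/2}\sum_{\mbf{j}} M(Q_{n,\mbf{j}})^\beta\right)\Big|\mc{F}_{t}\right] \approx \exp\left(-\lambda^\alpha\tfrac{\Gamma(1-\alpha)}{\sqrt{\pi d}} M_{t}(Q)\right)
\end{align*}
for $t=t_n$ with $s\ll t_n\ll \log n$. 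Here $M_t$ is then replaced by $M$ using convergence in probability.

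\textbf{Step 2 (decoupling and one-cube asymptotics).} Take $t=\log n - K$ with $K$ large. Then $X-X_t$ has correlation length $e^{-t}$, so blocks of cubes separated by distance $e^{-t}\asymp K'/n$ are independent given $\mc{F}_t$. I would use a standard block/corridor argument: drop thin separating strips, whose cost is controlled by Theorem \ref{thm:uniform-integrability}. The conditional Laplace transform then factorises up to errors. On each cube I write $M(Q_{n,\mbf{j}})\approx e^{\gamma_c X_t(x_{n,\mbf{j}})-dt}\sqrt{t}\cdot n^{-d}Z_{\mbf{j}}\cdot(\text{local factor})$, where $Z_{\mbf{j}}$ is a critical chaos of a unit cube from the scale-invariant field after rescaling. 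The key input is the tail asymptotic of critical chaos,
\begin{align*}
\mb{P}(M(Q_1)>u)\sim \frac{c_*}{u}\quad (u\to\infty),
\end{align*}
or more precisely a conditioned version of it. From this tail, $1-\mb{E}[e^{-\mu Z^\beta}] \sim \Gamma(1-\alpha)\,c_*\,\mu^\alpha$ as $\mu\to0$. Here $\mu=\lambda\ell(n)^{\beta/2}(\text{prefactor})^\beta\to0$ because of $e^{-dt}$ damping, and the factor $\mu^\alpha$ is linear in $e^{\gamma_c X_t-dt}\sqrt{t}n^{-d}\cdot\ell(n)^{1/2}/\sqrt{t}$. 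The factor $\sqrt{t}/\sqrt{\ell(n)}$ combined with the Seneta–Heyde normalisation gives $M_t(Q_{n,\mbf{j}})$ times the constant. Identifying the constant $1/\sqrt{\pi d}$ comes from the derivative-martingale/critical tail constant in the $\star$-scale invariant normalisation.

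\textbf{Step 3 (summation and error control).} I take logarithms of the product of conditional Laplace transforms, use $\log(1-x)\approx -x$ uniformly (the individual defects are small on a good event where $\max_x X_t(x)\le \gamma_c t - \tfrac{3}{2\gamma_c}\log t + O(1)$), and sum to get $\lambda^\alpha\frac{\Gamma(1-\alpha)}{\sqrt{\pi d}}M_t(f^\alpha)+o(1)$. The main obstacle is Step 2. The tail asymptotic must hold uniformly in the conditioning and with the correct constant, and at criticality the tail $u^{-1}$ sits exactly at the boundary of integrability. I would therefore derive it by refining the Laplace-transform estimates from the proof of Theorem \ref{thm:uniform-integrability}. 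The approach is to run the field from scale $t$ to $\infty$ as a branching-random-walk-type process and use a barrier (ballot) estimate, which produces the $\ell(n)^{1/2}$ correction and the Gaussian constant $1/\sqrt{\pi d}$.
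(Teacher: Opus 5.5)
Your overall architecture matches the paper: you reduce to $\mc{F}_{t_0}$-measurable $U$, condition on a coarse field, compare each cube's conditional Laplace defect with $\frac{\Gamma(1-\alpha)}{\sqrt{\pi d}}$ times its Seneta--Heyde mass, and read off \eqref{eq:stable-distributional-convergence} via Campbell's formula. Your Poisson computation is also correct. The genuine gap is in Steps~2--3, where you take logarithms of a \emph{product} of one-cube conditional Laplace transforms. Given $\mc{F}_t$, the cubes are not independent. For $t=\log n-K$ a cube depends on every cube within distance of order $e^{K}/n$, and even for $t=\ell(n)$ it depends on its $3^d$ neighbours. Your corridor argument factorises only over blocks, so inside a block you must still show
\begin{align*}
1-\mb{E}\Big[\prod_{\mbf{j}}(1-Y_{\mbf{j}})\,\Big|\,\mc{F}_t\Big]=\sum_{\mbf{j}}\mb{E}\big[Y_{\mbf{j}}\,\big|\,\mc{F}_t\big]+o(1),
\qquad Y_{\mbf{j}}:=1-e^{-\mu_{\mbf{j}}Z_{\mbf{j}}^{\beta}},
\end{align*}
that is, that the cross terms $\mb{E}[Y_{\mbf{j}}Y_{\mbf{k}}\,|\,\mc{F}_t]$ are negligible after summation. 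This cannot be extracted from the one-cube tail $c_*/u$, conditioned or not. Cauchy--Schwarz only gives $\mb{E}[Y_{\mbf{j}}Y_{\mbf{k}}]\le(\mb{E}[Y_{\mbf{j}}]\,\mb{E}[Y_{\mbf{k}}])^{1/2}\asymp\mu^{\alpha}$, the same order as the main term. A priori, the thick point producing a mass of size $\mu^{-\alpha}$ could sit near a shared face and make both neighbouring masses large. The paper supplies exactly this joint information through Lemma~\ref{lem:mom-hyperplane}: $\mb{E}[(M(B_1)M(B_2))^h]<\infty$ for some $h>\tfrac12$ when $B_1,B_2$ are separated by a hyperplane. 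Combined with $\min(1,x)\le x^{h\alpha}$, this gives $\mb{E}[b_{n,2}\ind_{A_n(a,L)}]\lesssim\ell(n)^{(2h-1)(1/2-a)}\to0$ for $a>\tfrac12$ (Lemma~\ref{lem:b2-estimate}). The Arratia--Goldstein--Gordon bound (Theorem~\ref{thm:poisson-approximation}), applied conditionally on $\mc{F}_{\ell(n)}$, then replaces your product/logarithm step. Conditioning at $t=\ell(n)$ rather than $\log n-K$ also makes the corridors unnecessary, since the dependency neighbourhoods are then just the adjacent cubes.

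The step you single out as the main obstacle is also left open, and the route you propose is heavier than necessary and partly misattributed. The constant $1/\sqrt{\pi d}$ is already provided by Theorem~\ref{thm:critical-tail} (from \cite{Won2019}). The factor $\ell(n)^{1/2}$ is just the Seneta--Heyde normalisation, since $q_{n,\mbf{j}}^{\alpha}\int_Q e^{\gamma_c H_{n,\mbf{j}}(y)}\dd y=f^{\alpha}M_{\ell(n)}(Q_{n,\mbf{j}})$. No barrier or ballot estimate is needed for either. What is needed is uniformity in the random local profile $H_{n,\mbf{j}}$ (your ``local factor''), which the paper obtains in two pieces. First, the Laplace asymptotic holds uniformly over the Arzel\`a--Ascoli compact set $\mc{K}_L$ of profiles with $\|\varphi\|_{C^{1/2}(Q)}\le L$ (Lemma~\ref{lem:moment-criticalGMC-compact}). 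Second, cubes with $\|H_{n,\mbf{j}}\|_{C^{1/2}(Q)}>L$ carry a weighted mass $\widetilde{\mc{E}}_n(L,p)$ whose fractional moments vanish as $L\to\infty$ uniformly in $n$ (Lemma~\ref{lem:local-fluctuation} and Corollary~\ref{cor:local-fluctuation}). Finally, Theorem~\ref{thm:uniform-integrability} as stated cannot show that corridors or cubes meeting $\partial D$ are negligible, because its bound $Cq^r$ does not shrink with the region. You would need the localised estimate from its proof: the conditional mean restricted to a set $S$ of cubes is $\lesssim f^{\alpha}\sum_{\mbf{j}\in S}e^{2\gamma_c H^*_{n,\mbf{j}}}M_{\ell(n)}(Q_{n,\mbf{j}})$.
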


\begin{rem}
Recall Campbell's theorem for Poisson random measures:
if $\mr{PPP}_{\lambda}(\dd u)$ is a Poisson point process with locally finite intensity measure $\lambda$ on some space $E$
(see e.g. \cite[Lemma 3.1(ii)]{Kal2017} for the setup of abstract Borel space), then
\begin{align*}
\mb{E}\left[ \exp\left(- \int_E \varphi(u)\mr{PPP}_\lambda(\dd u)\right)\right]
= \mb{E}\left[ \exp\left(- \int_E \left(1 - e^{-\varphi(u)}\right)\lambda(\dd u)\right)\right]
\end{align*}

\noindent for suitable test functions $\varphi$.
In particular, by applying the identity \eqref{eq:moment-via-laplace} one can deduce \eqref{eq:stable-distributional-convergence} immediately from \eqref{eq:check-stable}.

Note that $\mc{P}_{\alpha}[M]$ is (up to scalar multiple) the supercritical GMC measure with intermittency parameter $\gamma := \beta \gamma_c$ studied in \cite{MRV2016, BH2025a, BH2025b}.
A distinguishing feature of our result is that the multiplicative constant in the limiting law is completely explicit:
in contrast to the supercritical convergence results in \cite{MRV2016,BH2025b}, 
the dependence on the intermittency parameter and the dimension is given in closed form in \eqref{eq:check-stable} and \eqref{eq:stable-distributional-convergence}.
As we shall see soon, 
this explicit identification follows from the universal tail profile of critical GMCs established in \cite{Won2019}.
\end{rem}

\subsection{Motivation and applications}
Our interest in Theorem \ref{thm:uniform-integrability} and Theorem \ref{thm:critical-to-supercritical} comes from
the need for a better understanding of the multifractal property of critical multiplicative chaos,
which has various important implications in the spectral geometry of Liouville quantum gravity and the harmonic analysis of critical chaos.
We explain these applications below.

\subsubsection{Regularity of critical chaos}
It is well-known that GMC measures in the subcritical phase $\gamma \in (0, \sqrt{2d})$
are supported on $\gamma$-thick points whose Hausdorff dimension is equal to $\left(d - \frac{\gamma^2}{2}\right)$.
This probabilistic observation led to an elegant construction of subcritical GMCs \cite{Ber2017},
and is closely connected to many analytic properties satisfied by subcritical GMCs such as the finiteness of Frostman-type energy and two-sided H\"older estimate;
we also refer readers to \cite{Ber2023} for a nice discussion of related multifractal analysis.

Extending some of these observations to the critical regime $\gamma_c = \sqrt{2d}$,
the relevant set of thick points now has Hausdorff dimension $0$, 
which shows that the critical chaos is supported on a very thin set.
Even though $M$ is non-atomic, 
one expects much poorer regularity properties here compared to the subcritical counterparts.
Indeed, the H\"older regularity in the subcritical regime is lost and replaced by the far weaker logarithmic-type modulus of continuity below:
\begin{theo}\label{thm:log-modulus-of-continuity}
For any $r \in (0, 1/2)$,
there exists an almost surely finite random variable $C = C_r(\omega)$ such that
simultaneously for all cubes $B \subset Q$, we have  
\begin{align}\label{eq:log-modulus-of-continuity}
M(B) \le C \left(\log \left(1 + |B|^{-1}\right)\right)^{-r}
\end{align}

\noindent where $|B|$ denotes the Lebesgue measure of $B$.
\end{theo}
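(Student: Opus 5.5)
We derive the estimate from Corollary \ref{cor:uniform-integrability} using the elementary inequality $\max_{\mbf{j}} a_{\mbf{j}} \le (\sum_{\mbf{j}} a_{\mbf{j}}^\beta)^{1/\beta}$ together with a dyadic union bound. Fix $r \in (0,1/2)$ and restrict $n$ to dyadic scales $n = 2^k$. For every $\beta > 1$ we have
\begin{align*}
\max_{\mbf{j} \in \mc{J}_{2^k}} M(Q_{2^k, \mbf{j}})
\le V_{2^k}^\beta(Q)^{1/\beta}
= \ell(2^k)^{-\frac12}\, \widetilde{V}_{2^k}^\beta(Q)^{1/\beta}.
\end{align*}
Pick $\beta > 1$ close to $1$ and $s \in (0, 1/\beta)$; both will be tuned below. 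By Markov's inequality and Corollary \ref{cor:uniform-integrability},
\begin{align*}
\mb{P}\left(\max_{\mbf{j}} M(Q_{2^k,\mbf{j}}) > k^{-r}\right)
\le \mb{P}\left(\widetilde{V}_{2^k}^\beta(Q) > (k\log 2)^{\beta(\frac12 - r)} \right)
\le C\, k^{-s\beta(\frac12 - r)}.
\end{align*}
This is summable in $k$ only when $s\beta(\frac12-r) > 1$. Since $s\beta < 1$ and $\frac12 - r < \frac12$, that condition fails, and this is the main obstacle.

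To get around it, we sparsify the scales. Apply the bound along the subsequence $k_m = \lfloor e^{m^{\theta}}\rfloor$, or more simply $k_m = 2^m$. The probabilities then become $C 2^{-m s\beta(\frac12-r)}$, which is summable for any positive exponent. Borel--Cantelli then yields a random $m_0$ such that for $m \ge m_0$ every cube of side $2^{-k_m}$ has mass at most $(\log 2^{k_m})^{-r'}$. Here $r' \in (r, 1/2)$ is slightly larger than $r$, which is possible because the Markov step works for any $r' < 1/2$ once $\beta$ is close to $1$ and $s$ is close to $1/\beta$.

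Interpolation between sparse scales costs only a constant factor in the logarithm. Given an arbitrary cube $B \subset Q$ with $|B| = \delta^d$, choose $m$ with $2^{-k_{m+1}} \le \delta < 2^{-k_m}$. Then $B$ is covered by at most $2^d$ dyadic cubes of side $2^{-k_m}$. Hence
\begin{align*}
M(B) \le 2^d (k_m \log 2)^{-r'} \lesssim k_{m+1}^{-r'} \lesssim \left(\log(1+|B|^{-1})\right)^{-r'},
\end{align*}
using $k_{m+1} = 2k_m$ and $\log(1+|B|^{-1}) \asymp d\, k_{m+1}$ up to constants. Since $r' > r$ this already exceeds what is needed.

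Cubes that are large, with $m < m_0$, and cubes that are not dyadic-aligned are handled by the covering argument together with the almost sure finiteness of $M(Q)$. These cases are absorbed into the random constant $C_r(\omega)$. A single union bound over the $2^{dk}$ cubes at each level is never needed, because the $\beta$-variation already controls all cubes at that level simultaneously. The only delicate points are the choice of $(\beta, s)$ so that $s\beta(\frac12 - r') > 0$, which is automatic, and the use of geometric rather than all scales.
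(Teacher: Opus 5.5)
Your argument is correct, but it is not the route the paper takes. The paper does not reprove Theorem~\ref{thm:log-modulus-of-continuity}. It defers to the argument of \cite{BKNSW2015}, adapted to $\star$-scale invariant fields in arbitrary dimension via the tail estimate of Theorem~\ref{thm:critical-tail}; that is, it works directly with tail probabilities of the masses of dyadic cubes.

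You instead make the statement a corollary of Corollary~\ref{cor:uniform-integrability}:
\begin{itemize}
\item The inequality $\max_{\mbf{j}} M(Q_{2^k,\mbf{j}}) \le \ell(2^k)^{-1/2}\widetilde{V}_{2^k}^\beta(Q)^{1/\beta}$ turns the uniform fractional moment into $\mb{P}(\max_{\mbf{j}} M(Q_{2^k,\mbf{j}}) > k^{-r}) \lesssim k^{-s\beta(1/2-r)}$.
\item Sparsifying to $k_m = 2^m$ makes this summable.
\item Interpolation costs only a constant, because $\log(1+|B|^{-1})$ changes by a bounded factor between consecutive sparse scales.
\end{itemize}
There is no circularity, since the proof of Theorem~\ref{thm:uniform-integrability} never uses the modulus of continuity.

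What your route buys is unity: both the upper bound for $r<1/2$ and its optimality (Theorem~\ref{thm:optimal-mod-cont}, via Theorem~\ref{thm:critical-to-supercritical}) become consequences of the power-variation analysis. What it costs is reliance on the full machinery behind Theorem~\ref{thm:uniform-integrability}, notably Lemma~\ref{lem:local-fluctuation}, whereas the cited argument is lighter. In substance the two are close, since bounding the conditional mean $\lambda_n$ is itself a conditional union bound over the cubes $Q_{n,\mbf{j}}$, fed by the same $1/u$ tail.

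Two small remarks. First, the tuning of $\beta$, $s$ and $r'$ is unnecessary: after sparsification any $\beta>1$, $s\in(0,1/\beta)$ and $r'=r$ work. Second, since the $Q_{n,\mbf{j}}$ are open, your covering step tacitly uses that $M$ does not charge the countably many dyadic grid hyperplanes. This holds almost surely by the covering argument of Corollary~\ref{cor:boundary-momentGMC}, applied via Lemma~\ref{lem:moment-criticalGMC} to thin slabs around a hyperplane.
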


This result was first proved for exactly scale invariant fields in $d \le 2$ in \cite{BKNSW2015}, 
but the same argument can be adapted to general fields 
(under mild decomposition conditions on the covariance kernel and in particular covers $*$-scale invariant fields considered here) 
in arbitrary dimension provided that one has the tail probability estimates obtained in \cite{Won2019} (see Theorem \ref{thm:critical-tail} below).
The optimality of the condition $r < 1/2$ was not established in these previous works,
even though it is highly expected to be the case due to analogies with cascades:
the modulus of continuity of the form \eqref{eq:log-modulus-of-continuity} is true for any $r < 1/2$ but cannot hold for any fixed $r > 1/2$
for critical lognormal cascades (see \cite[Theorem 3]{BKNSW2014}).
We now confirm that the same conclusion holds for critical GMCs.

\begin{theo}\label{thm:optimal-mod-cont}
Let $D \subset \mb{R}^d$ be any non-empty open bounded set. We have 
\begin{align}
\mb{P}\left(\limsup_{|B| \to 0} \sqrt{\log \frac{1}{|B|}} M(B) > 0\right) = 1
\end{align}

\noindent where the lim sup is taken over all cubes $B$ inside $D$.
\end{theo}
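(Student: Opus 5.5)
We deduce Theorem \ref{thm:optimal-mod-cont} from the stable convergence in Theorem \ref{thm:critical-to-supercritical}. The idea is that a large power variation forces at least one large cube mass.

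Fix a cube $D' \subset D$. By translation and scaling invariance of the $\star$-scale invariant field, or simply by repeating the argument with $Q$ replaced by $D'$, we may assume $D' = Q$. If some $\beta$-variation on $D'$ is large, one cube in that partition must carry large mass. Indeed, for any $\beta > 1$,
\begin{align*}
\widetilde{V}_n^\beta(Q) = \ell(n)^{\frac{\beta}{2}} \sum_{\mbf{j}} M(Q_{n,\mbf{j}})^\beta
\le \Big(\max_{\mbf{j}} \sqrt{\ell(n)}\, M(Q_{n,\mbf{j}})\Big)^{\beta - 1} \sqrt{\ell(n)}\, M(Q).
\end{align*}
Since $|Q_{n,\mbf{j}}| = n^{-d}$, we have $\sqrt{\log(1/|Q_{n,\mbf{j}}|)} = \sqrt{d\,\ell(n)}$. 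Write $Z_n := \max_{\mbf{j}} \sqrt{\ell(n)}\, M(Q_{n,\mbf{j}})$. This quantity is bounded by $d^{-1/2}\sup_{|B| \le n^{-d}} \sqrt{\log(1/|B|)}\, M(B)$. The supremum on the right is non-increasing in $n$, and its limit is the $\limsup$ in the statement, call it $L$. It therefore suffices to show $\mb{P}(L = 0) = 0$.

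On the event $\{L = 0\}$, we have $Z_n \to 0$. The displayed inequality then gives
\[
\widetilde{V}_n^\beta(Q) \le Z_n^{\beta-1}\sqrt{\ell(n)}\, M(Q).
\]
This bound alone is not enough, because of the growing factor $\sqrt{\ell(n)}$. I would refine it by using $\beta$ close to $1$ together with the sharper bound
\[
\widetilde{V}^\beta_n \le Z_n^{\beta-1}\, \widetilde{V}^1_n,
\qquad \text{where } \widetilde{V}^1_n = \sqrt{\ell(n)}\, M(Q).
\]
The factor $\sqrt{\ell(n)}$ genuinely diverges, so this route only closes if $Z_n$ decays faster than $\ell(n)^{-1/(2(\beta-1))}$. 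That rate is not implied by $L = 0$.

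A better route is to compare two exponents, $1 < \beta_1 < \beta_2$. By H\"older/monotonicity,
\[
\widetilde{V}_n^{\beta_2}(Q) \le Z_n^{\beta_2 - \beta_1}\, \widetilde{V}_n^{\beta_1}(Q).
\]
Both sides are now correctly renormalised: $\ell(n)^{\beta_2/2} = \ell(n)^{(\beta_2-\beta_1)/2}\,\ell(n)^{\beta_1/2}$. Corollary \ref{cor:uniform-integrability} shows that $\widetilde{V}_n^{\beta_1}(Q)$ is tight. Hence, on $\{L = 0\}$, $\widetilde{V}_n^{\beta_2}(Q) \to 0$ in probability restricted to that event. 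That is, for every $\delta > 0$,
\[
\mb{P}\big(\{L = 0\} \cap \{\widetilde{V}^{\beta_2}_n(Q) > \delta\}\big) \to 0.
\]

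Now apply Theorem \ref{thm:critical-to-supercritical}, with $\beta = \beta_2$ and $f = \ind_Q$, using stable convergence. Since $\{L=0\}$ belongs to $\sigma(M) \subset \mc{F}_\infty$, we may take $U = \ind_{\{L = 0\}}$. Then
\[
\mb{E}\big[\ind_{\{L=0\}}\, e^{-\widetilde{V}_n^{\beta_2}(Q)}\big] \to \mb{E}\big[\ind_{\{L=0\}}\, e^{-c\, M(Q)}\big],
\qquad c = \Gamma(1-\alpha)/\sqrt{\pi d}.
\]
The left side tends to $\mb{P}(L = 0)$ by the previous step. The right side is strictly smaller than $\mb{P}(L=0)$ unless $\mb{P}(L = 0) = 0$, because $M(Q) > 0$ almost surely. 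Hence $\mb{P}(L = 0) = 0$, which is the claim.

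The main obstacle is the middle step, turning ``$L = 0$'' into smallness of the correctly normalised higher variation. This is exactly where the two-exponent interpolation, combined with the tightness from Corollary \ref{cor:uniform-integrability}, is essential. The remaining points are routine: measurability of $L$ with respect to $\mc{F}_\infty$, the reduction from $D$ to a subcube, and the almost sure positivity of $M$ on open sets.
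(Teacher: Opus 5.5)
Your proof is correct. Its core matches the paper's: the paper also bounds a higher variation by a lower one, $\widetilde{V}^3_{2^n}(Q) \le \sqrt{\log 2}\, Z_n \widetilde{V}^2_{2^n}(Q)$, which is your interpolation inequality with $\beta_1 = 2$, $\beta_2 = 3$. It also uses that the lower variation stays tight. You get tightness from the fractional moments of Corollary \ref{cor:uniform-integrability}, while the paper uses its convergence in distribution together with Slutsky.

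The genuine difference is how you pass from ``$L$ is not almost surely zero'' to ``$L > 0$ almost surely.''
\begin{itemize}
\item The paper uses the sandwich $\left(\inf_Q W_t\right) M^{(t)} \le M \le \left(\sup_Q W_t\right) M^{(t)}$ to show that $\{\limsup_n Z_n > 0\}$ lies in the tail $\sigma$-algebra of the white-noise decomposition. By Kolmogorov's zero-one law it then suffices to rule out $Z_n \to 0$ almost surely. That step needs only the plain distributional convergence and a non-degenerate limit, i.e.\ $\mb{P}(M(Q)>0)>0$, which follows from Theorem \ref{thm:critical-tail}.
\item You instead take $U = \ind_{\{L=0\}}$ in Theorem \ref{thm:critical-to-supercritical}, using the stability of the convergence. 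This gives the event-wise statement that $M(Q) = 0$ almost surely on $\{L=0\}$. You then conclude from $M(Q) > 0$ almost surely.
\end{itemize}

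What each route buys: yours avoids the tail-$\sigma$-algebra bookkeeping and gives a localized conclusion. In exchange it needs the full stable convergence and the fact $\mb{P}(M(Q)>0)=1$. That fact is standard, but it is usually proved by the same zero-one argument: $\{M(Q)=0\} = \{M^{(t)}(Q)=0\}$ for every $t$, so it is a tail event, and it has probability less than $1$ by Theorem \ref{thm:critical-tail}. So the zero-one law is relocated into a familiar fact rather than eliminated.
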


Note that Theorem \ref{thm:optimal-mod-cont} is stronger than the analogous result in the cascade setup,
which only ruled out \eqref{eq:log-modulus-of-continuity} for $r$ strictly greater than $\frac{1}{2}$.

\subsubsection{Spectral geometry of critical Liouville quantum gravity}\label{sssec:spectral-lqg}
Our interest in the optimal modulus of continuity is not only motivated by curiosity 
but also its applications to the spectral geometry of Liouville quantum gravity (LQG).

Let $D \subset \mb{R}^2$ be a bounded open domain.
For each $\gamma \in (0, 2)$,
the $\gamma$-LQG surface \cite{DS2011} is a random surface equipped with the formal Riemannian metric tensor
\begin{align*}
e^{\gamma h(x)} \left(\dd x_1^2 + \dd x_2^2\right), \qquad x = (x_1, x_2) \in D
\end{align*}

\noindent where $h(\cdot)$ is a Gaussian free field on $D$ with Dirichlet boundary condition.
In rigorous terms, the random surface may be viewed as a random metric measure space
where the volume form $\mu_{\gamma}(\dd x) = \lim_{\eps \to 0^+} \eps^{\frac{\gamma^2}{2}} e^{\gamma h_\eps(x)} \dd x$
is defined under the general framework of GMC,
and the metric is constructed via Liouville first passage percolation and uniquely characterised by a set of axioms
(see \cite{DDG2023} and the references therein).

In a joint work \cite{BW2023} with Berestycki,
the author initiated the study of LQG surfaces from the perspective of spectral geometry,
i.e. understanding how the geometric properties of LQG surfaces are related to
the spectral properties of the (formal) Laplace-Beltrami operator $\Delta_{\gamma}$.
In terms of the probabilistic language, 
the operator $\frac{1}{2}\Delta_{\gamma}$ may be interpreted as the infinitesimal generator of Liouville Brownian motion \cite{Ber2015,GRV2016} (the canonical diffusion process on LQG surfaces),
and it was established in \cite{MRVZ2016,AK2016} that the non-negative self-adjoint operator $-\tfrac{1}{2}\Delta_{\gamma}$ has compact resolvent 
and hence its discrete spectrum $0 \le \boldsymbol{\lambda}_{\gamma, 1} \le \boldsymbol{\lambda}_{\gamma, 2} \le \dots$ only accumulates at infinity.
One of the main results in \cite{BW2023} was a Weyl law giving the leading-order asymptotic for the eigenvalue counting function 
$\mr{\mbf{N}}_{\gamma}(\lambda) := \#\{\boldsymbol{\lambda}_{\gamma, j} \le \lambda\}$ as $\lambda \to \infty$: we have 
\begin{align}\label{eq:weyl-law}
\frac{1}{\lambda} \mr{\mbf{N}}_{\gamma}(\lambda) \xrightarrow[\lambda \to \infty]{} \frac{\mu_{\gamma}(D)}{\pi(2 - \gamma^2 /2)}
\qquad \text{in probability}.
\end{align}

\noindent This result shows that the spectral dimension of (subcritical) LQG surfaces is equal to $2$ (which was first verified in \cite{RV2014}),
and on top of that the leading order coefficient is described by a new proportionality constant that is distinct from the Riemannian predictions.
A curious observation is that as $\gamma \to 2^-$,
the leading order coefficient on the RHS of \eqref{eq:weyl-law} converges in probability to a multiple of the critical LQG measure \cite{Lac2024},
and it is very natural to ask whether an analogous Weyl law holds at criticality $\gamma_c = 2$.

In order to make sense of such a question,
we must address a much more fundamental problem, 
i.e. whether the operator $\Delta_{\gamma_c}$ still has a discrete spectrum.
We recall that in the subcritical case,
the existence of the discrete spectrum is established by showing
\begin{align}\label{eq:resolvent-HS}
\int_{D \times D} G_D(x, y)^2 \mu_\gamma(\dd x) \mu_{\gamma}(\dd y) < \infty \qquad \text{almost surely},
\end{align}

\noindent which is a sufficient criterion because it shows that the resolvent operator is Hilbert-Schmidt and hence compact.
Since $G_D(x, y) = - \log|x-y| + O(1)$ in the bulk of the domain $D$,
it is not hard to show that an estimate analogous to \eqref{eq:resolvent-HS} does not hold for critical LQG measure.
Indeed Theorem \ref{thm:critical-to-supercritical}  
(and an adaptation of our proof of Theorem \ref{thm:optimal-mod-cont}) would also imply that the Schatten $p$-norm 
of the resolvent operator $\left(-\tfrac{1}{2}\Delta_{\gamma_c} + \lambda\right)^{-1}$ is almost surely infinite for any $p > 1$ and any $\lambda > 0$,
but we will directly establish the following stronger result.
\begin{theo}\label{thm:cLQG-spec}
Almost surely,
the spectrum of the infinitesimal generator of critical Liouville Brownian motion is not discrete,
and the essential spectrum $\sigma_{\mr{ess}}(-\tfrac{1}{2}\Delta_{\gamma_c})$ contains $0$.
\end{theo}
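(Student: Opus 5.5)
We will show that $0$ is in the essential spectrum by building a Weyl sequence of test functions with disjoint supports and vanishing Rayleigh quotients. The generator $-\tfrac12\Delta_{\gamma_c}$ is the self-adjoint operator on $L^2(D,M)$ (with $M=\mu_{\gamma_c}$ the critical LQG measure) associated with the Dirichlet form
\begin{align*}
\mc{E}(u,u)=\tfrac12\int_D|\nabla u|^2\,\dd x,
\end{align*}
since the Dirichlet energy is conformally invariant and does not see the metric. By the variational characterisation (Weyl's criterion for quadratic forms), it suffices to find, almost surely, an infinite sequence $u_k\in H^1_0(D)$ with pairwise disjoint supports and $\int u_k^2\,\dd M=1$ such that $\mc{E}(u_k,u_k)\to0$. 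Indeed, disjointness makes the $u_k$ orthonormal in $L^2(M)$ and $\mc{E}$-orthogonal. Hence for every $\eps>0$ the spectral projection $\ind_{[0,\eps]}(-\tfrac12\Delta_{\gamma_c})$ has infinite rank. This gives $0\in\sigma_{\mr{ess}}$ and rules out discreteness.

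For the test functions we use logarithmic cutoffs around small cubes. For a cube $B$ of side $\delta$ centred at $x_B$, let
\begin{align*}
u_B(x)=\min\Bigl\{1,\ \frac{\log(\rho/|x-x_B|)}{\log(\rho/\delta)}\Bigr\}_+ ,
\end{align*}
with a fixed outer radius $\rho$, or $\rho=\delta^{1/2}$ to keep supports small. Then $u_B\equiv1$ on $B$ and the two-dimensional capacity computation gives
\begin{align*}
\int|\nabla u_B|^2\,\dd x=\frac{2\pi}{\log(\rho/\delta)}\asymp\frac{1}{\log(1/\delta)} .
\end{align*}
Consequently
\begin{align*}
\frac{\mc{E}(u_B,u_B)}{\int u_B^2\,\dd M}\le\frac{C}{\log(1/\delta)\,M(B)}\asymp\frac{C}{\log(1/|B|)\,M(B)} .
\end{align*}
We need cubes with $\log(1/|B|)\,M(B)\to\infty$. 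Theorem \ref{thm:optimal-mod-cont} only gives $\sqrt{\log(1/|B|)}\,M(B)\ge c>0$ infinitely often, so that $\log(1/|B|)\,M(B)\gtrsim\sqrt{\log(1/|B|)}\to\infty$. This is more than enough: the Rayleigh quotient decays like $(\log 1/|B|)^{-1/2}$.

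To obtain infinitely many \emph{disjoint} such cubes, we apply Theorem \ref{thm:optimal-mod-cont} inductively. Fix a countable family of disjoint open balls $D_1,D_2,\dots\subset D$. By the theorem applied to each $D_i$ (a countable intersection of probability-one events), almost surely each $D_i$ contains a cube $B_i$ with
\begin{align*}
|B_i|\le 2^{-i}
\qquad\text{and}\qquad
\sqrt{\log(1/|B_i|)}\,M(B_i)\ge c_i>0 .
\end{align*}
This alone does not force the Rayleigh quotients to zero, because $c_i$ may degenerate. We therefore work inside a single ball $D_1$ and iterate. Given disjoint $B_1,\dots,B_k$ with shrinking logarithmic neighbourhoods, pick a small ball $D'$ in $D_1$ away from them (possible since $M$ is non-atomic and the used region has small measure). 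Apply the theorem in $D'$, with a fixed positive $\limsup$ value $c'$, to get $B_{k+1}$ of arbitrarily small size. Since the quotient bound is $C/(c'\sqrt{\log(1/|B_{k+1}|)})$, choosing $|B_{k+1}|$ small enough makes it less than $1/k$. The outer radii $\rho_k$ are chosen so that the supports $B(x_{B_k},\rho_k)$ stay inside $D'$, hence are disjoint, and $\log(\rho_k/\delta_k)\ge\tfrac12\log(1/\delta_k)$.

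The main obstacle is that the positivity constant in Theorem \ref{thm:optimal-mod-cont} is random and depends on the sub-domain. We handle this by taking the sub-domain $D'$ first, then invoking the almost-sure statement for the countably many rational balls $D'$ simultaneously, so that each $D'$ has its own positive $\limsup$. Only the smallness of $|B|$, which is arbitrary within each $D'$, is needed to make the quotient small. A second technical point is to identify the form domain rigorously: the Dirichlet form of critical Liouville Brownian motion is the trace form, so we must check that Lipschitz compactly supported $u_B$ lie in its domain with energy $\tfrac12\int|\nabla u_B|^2$. This follows from the construction of the form via time change, since $M$ charges no polar sets.
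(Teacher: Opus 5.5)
Your overall route is the paper's. You use logarithmic cutoffs around small cubes with energy $\asymp 1/\log(1/|B|)$ (Lemma \ref{lem:key-capacity-estimate}), take cubes on which $\log(1/|B|)\,M(B)$ is arbitrarily large from the optimal modulus of continuity, and use disjoint supports to get an $L^2$-orthonormal, $\mc{E}$-orthogonal sequence with vanishing energy. Concluding via infinite-rank spectral projections $\ind_{[0,\eps]}$ is correct. It is also slightly more direct than the paper's two steps (non-compact embedding, then $\lambda_m=0$ via min--max and Theorem \ref{thm:equivalence-poincare-capacity}), since one sequence gives both conclusions.

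The genuine gap is the input. Theorem \ref{thm:optimal-mod-cont} is proved for the critical chaos of the $\star$-scale invariant field $X$. The critical LQG measure is built from the Dirichlet GFF, which is not $\star$-scale invariant, so you cannot apply that theorem to $M_{\mr{LQG}}$ as written. The paper fills this with Corollary \ref{cor:optimal-mod-cont}. In the interior, $G_D(x,y)=-\log|x-y|+g(x,y)$ with $g$ smooth. Theorem \ref{thm:field-decomposition} then couples the GFF with a $\star$-scale invariant field so that their difference is H\"older continuous on a compactly contained subdomain. This gives $M_{\mr{LQG}}(B)\ge c(\omega)\,M(B)$ there with a random $c(\omega)>0$, so positivity of the $\limsup$ transfers. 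You need to insert this step, for each of your countably many subdomains, before the rest of your argument applies.

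Two smaller remarks. First, you did not need to abandon the fixed disjoint family $D_1,D_2,\dots$. Since the $\limsup$ is positive in each fixed $D_i$, you can choose $B_i$ so small that $c_i\sqrt{\log(1/|B_i|)}\ge i$. This is exactly what the paper does by requiring $M_{\mr{LQG}}(B_{n,3})\log(1+|B_{n,3}|^{-1})\ge nC_n$ in deterministic disjoint cubes, so your random iteration over rational balls is valid but superfluous. Second, in either version the small cube must lie inside a compactly contained subdomain (the paper's $B_{n,2}\Subset B_{n,1}\Subset B_n$). A cube found near $\partial D'$ would force an outer radius much smaller than $\delta^{1/2}$, and then $\log(\rho/\delta)\gtrsim\log(1/\delta)$ can fail.
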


Let us provide a non-rigorous explanation for $0 \in \sigma_{\mr{ess}}(-\tfrac{1}{2}\Delta_{\gamma_c})$ from an analytical perspective.
We recall that there exists a Dirichlet form $(\mc{E}, \mc{D})$ such that
\begin{align*}
  \mc{E}(f, g) := \frac{1}{2} \int_D \nabla f(x) \cdot \nabla g(x) ~\dd x \qquad \text{on} \qquad  C_c^\infty(D) \subset \mc{D}
\end{align*}

\noindent and $\mc{D}$ is the closure of $C_c^\infty(D)$ with respect to the norm
\begin{align}\label{eq:form-norm}
\|f\|_{\mc{D}}^2 :=  \mc{E}(f, f) + \|f\|_{L^2(\mu_\gamma)}^2
\quad \text{where}
\quad \|f\|_{L^2(\mu_\gamma)}^2 = \langle f, f \rangle_{\mu_\gamma} := \int_D f(x)^2\mu_{\gamma}(\dd x).
\end{align} 

\noindent By Kato's representation theorem for bilinear forms (see \cite[Chapter VI.2.1]{Kat1995}),
there exists a unique non-negative self-adjoint operator $\mc{A}$ on $L^2(\mu_{\gamma})$ with $\mr{Dom}(\mc{A}) \subset \mc{D}$ such that  
\begin{align*}
\mc{E}(f, \varphi) = \langle \mc{A}f, \varphi \rangle_{\mu_{\gamma}} \qquad \forall (f, \varphi) \in \mr{Dom}(\mc{A})\times\mc{D}
\end{align*}

\noindent and $-\mc{A}$ is precisely the infinitesimal generator $\tfrac{1}{2}\Delta_{\gamma}$ of Liouville Brownian motion killed upon leaving $D$ \cite{AK2016}.
As such, the eigenvalue equation admits the following weak formulation: 
$(\boldsymbol{\lambda}, \mbf{f}) \in [0, \infty) \times  \mc{D} $ is an eigenpair for the LQG Laplacian $-\tfrac{1}{2}\Delta_{\gamma}$ if
\begin{align*}
\mc{E}(\mbf{f}, \varphi) = \boldsymbol{\lambda} \langle \mbf{f}, \varphi \rangle_{\mu_\gamma}
\qquad \forall \varphi \in C_c^\infty(D).
\end{align*}

Let us denote by $M_{\mr{LQG}}(\dd x)$ the critical LQG measure constructed via Seneta-Heyde renormalisation
so that $(2-\gamma)^{-1} \mu_{\gamma}(\dd x) \to  \sqrt{2\pi} M_{\mr{LQG}}(\dd x)$ as $\gamma \to \gamma_c^- = 2^-$.
Suppose $\gamma$ is very close to the critical threshold, and $(\boldsymbol{\lambda}_{\gamma, j}, \mbf{f}_{\gamma, j})$ is the $j$-th eigenpair of $-\tfrac{1}{2}\Delta_{\gamma}$,
then for any test function $\varphi \in C_c^\infty(D)$ we may expect
\begin{align}\label{eq:heuristic-essential}
\mc{E}(\mbf{f}_{\gamma, j}, \varphi)
= \boldsymbol{\lambda}_{\gamma, j} \langle  \mbf{f}_{\gamma, j}, \varphi \rangle_{\mu_{\gamma}}
\approx \left[ \sqrt{2\pi} (2-\gamma) \boldsymbol{\lambda}_{\gamma, j}\right] \langle  \mbf{f}_{\gamma, j}, \varphi \rangle_{M_{\mr{LQG}}}
\end{align}

\noindent i.e. $ \sqrt{2\pi} (2-\gamma) \boldsymbol{\lambda}_{\gamma, j}$ is an approximate eigenvalue for $-\tfrac{1}{2}\Delta_{\gamma_c}$.
Meanwhile, 
if one extrapolates the Weyl asymptotic uniformly as $\gamma \to 2^-$ down to fixed eigenvalue indices,
the finite limit of its leading-order coefficient suggests that $\boldsymbol{\lambda}_{\gamma,j}$ might remain bounded for each fixed $j$.
This would support the claim that $\lim_{\gamma \to 2^-}  \sqrt{2\pi} (2-\gamma) \boldsymbol{\lambda}_{\gamma, j} = 0$ for every $j$,
and in turn suggest that $0 \in \sigma_{\mr{ess}}(-\tfrac{1}{2}\Delta_{\gamma_c})$.

Of course, the approximation step in \eqref{eq:heuristic-essential} is purely heuristic 
(for instance we do not even know if $\mbf{f}_{\gamma, j} \in L^2(M_{\mr{LQG}})$),
and a priori we do not have an almost-sure finitary upgrade of \eqref{eq:weyl-law} 
that could justify the boundedness of $\boldsymbol{\lambda}_{\gamma, j}$ as $\gamma \to 2^-$.
Our real approach is based on the potential-theoretic framework of Sobolev space by Maz'ya (see Section \ref{sec:sobolev}).
We will show that the embedding $\left(\mc{D}, \|\cdot\|_{\mc{D}}\right) \hookrightarrow L^2(M_{\mr{LQG}})$ is non-compact,
and this is closely related to the failure of a measure Poincar\'e inequality that leads to the vanishing of Rayleigh quotients.

Our result is reminiscent of the sharp criterion in \cite[Theorem 1.4]{HLN2006} 
for self-similar measures generated by iterated function systems satisfying the open set condition:
the measure Poincar\'e inequality and compact Sobolev embedding hold precisely when the lower $L^\infty$-dimension exceeds $d - 2$.
While critical LQG is not covered by this framework,
our logarithmic mass estimates in Corollary \ref{cor:optimal-mod-cont} (combined with the known subcritical theory)
establish an analogous spectral dichotomy for LQG:
the resolvent is compact in the subcritical phase but this no longer holds at criticality.

\subsubsection{Harmonic analysis of critical GMCs}
A different spectral aspect of critical GMCs we are interested in is its Fourier-analytic property.
Given a finite Borel measure $\mu$ on $Q = (0, 1)^d$, define its Fourier coefficients by
\begin{align*}
\widehat{\mu}(\mbf{n})
:= \int_Q e^{2\pi i \mbf{n}\cdot x} \mu(\dd x), \qquad \mbf{n}\in\mb{Z}^d.
\end{align*}

\noindent A fundamental question is how these coefficients behave in the high-frequency limit as $\|\mbf{n}\|_\infty \to \infty$.
In the literature of harmonic analysis,
the measure $\mu$ is called a Rajchman measure if $\widehat{\mu}(\mbf{n})\to 0$ in the limit,
and one defines its Fourier dimension by
\begin{align*}
\mr{dim}_{\mr{F}}(\mu)
:= \sup\left\{s \in [0,d]: |\widehat{\mu}(\mbf{n})|^2 = O(\|\mbf{n}\|_\infty^{-s}) \text{ as }\|\mbf{n}\|_\infty \to \infty\right\}.
\end{align*}

\noindent Our goal here is to investigate these questions for critical GMCs.

The study of Fourier coefficients of GMCs goes back to the work of Falconer and Jin \cite{FJ2019},
who established positive Fourier dimension for subcritical GMCs on suitable planar domains 
with sufficiently small intermittency parameter $\gamma$ through regularity estimates for orthogonal projections.
This subject gained new momentum with the work of Garban and Vargas \cite{GV2026}
which initiated a systematic investigation of the harmonic analysis of GMC on the circle.
Beyond its intrinsic interest,
this problem is motivated by its relation to the Virasoro algebra in Liouville conformal field theory \cite{BGKRV2024};
it also has close parallels in random matrix theory (through the study of holomorphic multiplicative chaos (HMC) \cite{NPS2023})
and analytic number theory (through questions concerning the cancellation phenomenon of partial sums of random multiplicative functions \cite{Har2024}).
For distributional limits, 
we refer the interested reader to \cite{GV2026} for results on real subcritical GMCs,
\cite{NPS2023,NPSV2025,AN2025} for the HMC setup and applications to circular $\beta$-ensembles,
and \cite{GW2024,GW2025a,GW2025b,Har2025,HSX2026} for related recent developments in number theory.

In this paper we are primarily concerned with upper bounds on the modulus of Fourier coefficients.
It is important to point out that the Rajchman property of a measure $\mu$ does not follow immediately from non-atomicity,
let alone any quantitative rate of decay.
That said, 
the prediction of Fourier dimension by Garban-Vargas in $d=1$
was confirmed by Lin, Qiu and Tan in \cite{LQT2024} for subcritical GMC measures on the unit interval,
and analogous results in $d=2$ were obtained by the same authors in \cite{LQT2025} using more sophisticated tools of vector-valued martingales,
which also produce partial results in higher dimensions;
the higher-dimensional cases are then settled by the work of Chen, Lin and Qiu \cite{CLQ2025} 
who showed that subcritical chaos $\mr{GMC}_{\gamma}$ associated to a general class of log-correlated fields on the $d$-dimensional torus satisfy
\begin{align}\label{eq:GMC-subcritical-Fourier-dimension}
\mr{dim}_{\mr{F}}(\mr{GMC}_{\gamma})
= \begin{cases}
d - \gamma^2 & \text{for $\gamma \in (0, \sqrt{2d} / 2)$,}\\
(\sqrt{2d} - \gamma)^2 & \text{for $\gamma \in [\sqrt{2d} / 2, \sqrt{2d})$.}
\end{cases}
\end{align}

\noindent We would also like to highlight the recent work of Orsoni and Verreault \cite{OV2026},
which introduced a different approach based on finite-range decompositions and integration by parts.
Their method yields a substantially simpler proof of the Fourier dimension \eqref{eq:GMC-subcritical-Fourier-dimension} for general $d \ge 1$,
and they also obtain sharp results for restrictions of GMCs to various classes of Euclidean domains, 
highlighting how boundary geometry can reduce the Fourier dimension.

At criticality, the Fourier dimension of GMC is equal to $0$,
which is in agreement with the limiting behaviour of \eqref{eq:GMC-subcritical-Fourier-dimension}.
A priori, it is not clear if the Fourier coefficients $\widehat{M}(\mbf{n})$ would vanish in the high-frequency limit,
and the Rajchman property for critical chaos was posed as an open problem by Garban and Vargas in the circle setting.
The first investigation in this direction was due to Arguin and Hamdan \cite{AH2025},
who considered critical GMCs associated with $\star$-scale invariant field on the unit interval,
and they showed that the $n$-th Fourier coefficient has a decay of $O((\log n)^{{-\frac{1}{4}+\eps}})$ in the sense of convergence in probability.
During the preparation of the present manuscript,
Cai, Chen, Fang and Guo \cite{CCFG2026} established qualitative almost-sure Fourier decay for the canonical critical GMC on the unit circle, 
proving that this measure is almost surely Rajchman.
Our result gives a quantitative affirmative answer to the critical Rajchman question for $\star$-scale invariant fields in arbitrary dimension,
establishing a Fourier decay with logarithmic exponent of $1/2$.

\begin{theo}\label{thm:fourier-decay}
The following statements hold:
\begin{itemize}
\item[(i)] The random variables $\left(\left(\log \|\mbf{n}\|_\infty\right)^{1/2} \widehat{M}(\mbf{n})\right)_{\mbf{n} \ne 0}$ are tight.
\item[(ii)] For any fixed $\eps > 0$, we have $\left(\log \|\mbf{n}\|_\infty\right)^{1/2 - \eps} \widehat{M}(\mbf{n}) 
\to 0$ almost surely as $\|\mbf{n}\|_\infty \to \infty$.
\end{itemize}
\end{theo}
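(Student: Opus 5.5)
The plan is to reduce the Fourier coefficient to a sum over cubes whose oscillatory parts cancel, with the leftover error controlled by a $\beta$-variation of $M$. Fix $\mbf{n}$ with $N := \|\mbf{n}\|_\infty$ large, and partition $Q$ into the cubes $Q_{m,\mbf{j}}$ with $m = m(N)$ chosen below. We write
\begin{align*}
\widehat{M}(\mbf{n}) = \sum_{\mbf{j} \in \mc{J}_m} \int_{Q_{m,\mbf{j}}} e^{2\pi i \mbf{n}\cdot x} M(\dd x).
\end{align*}
The character oscillates on scale $1/N$, so a single cube does not give cancellation. Instead we use the martingale decomposition: for the scale $t = \log m$, the field $X_t$ is essentially constant on each $Q_{m,\mbf{j}}$, and conditionally on $\mc{F}_t$ the chaos on $Q_{m,\mbf{j}}$ is $e^{\gamma_c X_t(x_{m,\mbf{j}}) - dt}$ times an independent copy of a rescaled critical chaos. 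The fine-scale chaos has range-$1/m$ dependence, so blocks that are not adjacent are independent given $\mc{F}_t$.

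The steps are as follows.
\begin{enumerate}
\item[(a)] Compute the conditional mean. We have $\mb{E}[\widehat{M}(\mbf{n}) \mid \mc{F}_t] = \int e^{2\pi i\mbf{n}\cdot x}\sqrt{t}\,\mathrm{(normalisation)}\, e^{\gamma_c X_t(x)-dt}\dd x$, the Fourier coefficient of the approximation $M_t$ (more precisely of its conditional expectation). Since $X_t$ is smooth at scale $e^{-t}$, integration by parts in a direction where $|n_k| = N$ gains a factor $e^{t}/N$ times a derivative bound on $e^{\gamma_c X_t}$. Choosing $e^{t} = N^{1/2}$, or any $N^{1-\delta}$, makes this term $O(N^{-c})$ times a quantity with finite moments. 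So it is negligible.
\item[(b)] Bound the fluctuation $\widehat{M}(\mbf{n}) - \mb{E}[\widehat{M}(\mbf{n}) \mid \mc{F}_t]$. This is a sum of conditionally centred, finitely dependent block variables $Z_{\mbf{j}}$ with $|Z_{\mbf{j}}| \le 2M(Q_{m,\mbf{j}})$ plus a conditional mean term. Critical chaos has no second moment, so we use a von Bahr--Esseen type inequality for $\beta' \in (1,2)$ after splitting into $3^d$ independent sublattices:
\begin{align*}
\mb{E}\left[ \left|\sum_{\mbf{j}} Z_{\mbf{j}}\right|^{\beta'} \,\Big|\, \mc{F}_t \right] \lesssim \sum_{\mbf{j}} \mb{E}\left[ M(Q_{m,\mbf{j}})^{\beta'} \,\big|\, \mc{F}_t \right].
\end{align*}
Because $\mb{E}[M^{\beta'}]$ is infinite, we work with Laplace transforms and truncation instead. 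This is exactly the structure behind Theorem~\ref{thm:uniform-integrability}: $\sum_{\mbf{j}} M(Q_{m,\mbf{j}})^\beta$ is $V_m^\beta(Q)$, of order $\ell(m)^{-\beta/2}$ in the sense of Corollary~\ref{cor:uniform-integrability}. Conditionally on $\widetilde{V}_m^{\beta}(Q)$, a Marcinkiewicz--Zygmund bound with a truncation at level $\ell(m)^{-1/2}$ gives $|\mathrm{fluct}|^{\beta} \lesssim V_m^\beta(Q)$ up to truncation errors. The truncation errors are controlled by $\max_{\mbf{j}} M(Q_{m,\mbf{j}})$ via Theorem~\ref{thm:log-modulus-of-continuity}, or in tail form via the critical tail estimate. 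Hence $|\widehat{M}(\mbf{n})| \lesssim \ell(m)^{-1/2}\,\widetilde{V}_m^\beta(Q)^{1/\beta}$, and since $\ell(m) \asymp \log N$, this gives tightness (i).
\item[(c)] For (ii), we quantify the bound in (b): $\mb{P}\big((\log N)^{1/2}|\widehat{M}(\mbf{n})| > \lambda\big) \le C\lambda^{-r\beta}$ from Theorem~\ref{thm:uniform-integrability}, uniformly in $\mbf{n}$. This is not summable over $\mbf{n} \in \mb{Z}^d$, so we run Borel--Cantelli along dyadic shells $N \in [2^k, 2^{k+1})$. Within a shell we fix one partition scale $m_k$. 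The random quantities $\widetilde{V}_{m_k}^\beta(Q)$ and $\max_{\mbf{j}} M(Q_{m_k,\mbf{j}})$ are then common to all $\mbf{n}$ in the shell, and only the conditional Marcinkiewicz--Zygmund step depends on $\mbf{n}$.
\end{enumerate}

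The main obstacle is step (c): achieving uniformity over the roughly $2^{kd}$ frequencies in a shell. For this we would use the high conditional moments available given $\mc{F}_{t}$ after truncating each block at $\ell(m)^{-1/2+\eps/2}$. The number of blocks exceeding this level is controlled by Theorem~\ref{thm:critical-tail}, and via Theorem~\ref{thm:log-modulus-of-continuity} it is eventually zero when the truncation is at $\ell(m)^{-r}$ with $r < 1/2$. With bounded truncated summands, Bernstein/Hoeffding inequalities give conditional tails of the form $\exp\big(-\lambda^2/\sum_{\mbf{j}} M(Q_{m,\mbf{j}})^2\big)$. The quantity $\sum_{\mbf{j}} M(Q_{m,\mbf{j}})^2$ is $V^2_m(Q) \le \max_{\mbf{j}} M(Q_{m,\mbf{j}}) \cdot M(Q) \lesssim \ell(m)^{-r}$. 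This gives super-polynomial decay in $k$, and the shortfall $r < 1/2$ is exactly what produces the $\eps$ loss in (ii). A union bound over $2^{kd}$ frequencies then requires $\lambda^2 \ell(m)^{r} \gg k$. Reconciling this requirement with the target $(\log N)^{-1/2+\eps}$ is the delicate point. If it fails, the fallback is to use the $\beta$-variation bound with large $\beta$, for which $V_m^\beta(Q) \lesssim \ell(m)^{-\beta r'}$, to sharpen the Bernstein variance proxy.
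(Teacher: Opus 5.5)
There is a genuine gap underneath steps (a) and (b). Critical chaos has infinite mean, since $\mathbb{P}(M(Q)>u)\sim 1/(u\sqrt{\pi d})$. Because $M^{(t)}$ is a rescaled copy of $M$, this gives $\mathbb{E}[M(Q_{m,\mathbf{j}})\mid\mathcal{F}_t]=\infty$ for every block. So $\mathbb{E}[\widehat{M}(\mathbf{n})\mid\mathcal{F}_t]$ does not exist. In particular it is not the Fourier coefficient of $M_t$: the Seneta--Heyde approximations are not conditional expectations of $M$. The integration-by-parts estimate in (a) and the conditionally centred blocks $Z_{\mathbf{j}}$ in (b) are therefore unavailable. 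A truncated centring would require smoothness of truncated conditional intensities on scale $1/N$, which you do not address. Moreover, the step ``conditionally on $\widetilde{V}^\beta_m(Q)$, Marcinkiewicz--Zygmund gives $|\mathrm{fluct}|^\beta\lesssim V^\beta_m(Q)$'' is not legitimate. The block masses and the cancelling phases come from the same fine-scale randomness, so you cannot freeze the former and average over the latter.

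The idea you are missing is the paper's local randomisation. It takes a Lipschitz partition of unity $(\psi_{k,z})$ at scale $b_k$ and shifts each block's fine chaos $M^{(t_k)}$ by an independent $U_z\sim\mathrm{Uniform}([-h,h]^d)$, with $h\asymp b_k^2$ and $h\|\mathbf{n}\|_\infty\ge 1$. By translation invariance and finite-range independence within residue classes mod $5$, the shifted block sums have the same law as the original ones. The shifted sum then splits into three parts. The first is $\sum_z(e^{-2\pi i\mathbf{n}\cdot U_z}-m(\mathbf{n}))M^{\psi}_{k,z}(\mathbf{n})$, which given $\mathcal{F}_\infty$ is a sum of independent centred terms with deterministic moduli at most $M(Q\cap\mathrm{supp}\,\psi_{k,z})$. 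The second is $m(\mathbf{n})$ times the original sum; since $|m(\mathbf{n})|\le 1/(2\pi)$, it is absorbed into the left-hand side. The third is a shift error controlled by boundary mass and the oscillation of $X_{t_k}$. This converts Fourier cancellation into a statement about $V^\beta_{2^k}(Q)$ without ever needing a conditional mean of the chaos.

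Step (c) also fails quantitatively. A Hoeffding bound ($\beta=2$) combined with a union bound over a shell of $|\Lambda|$ frequencies costs a factor $\sqrt{\log|\Lambda|}\asymp\sqrt{\log N}$. Even with the true size $V^2_m(Q)\asymp 1/\log m$ (Theorem~\ref{thm:critical-to-supercritical}) this leaves $O(1)$, not decay. Your proxy $\ell(m)^{-r}$ with $r<1/2$ is worse.

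Your fallback of taking $\beta$ large goes the wrong way; one should interpolate towards $\ell^1$. Lemma~\ref{lem:high-moment-estimate} gives $L^r$ bounds of order $r^{1-1/\beta}(\sum_j m_j^\beta)^{1/\beta}$. Taking $r\asymp\log|\Lambda|$ as in Corollary~\ref{cor:high-moment-estimate} and letting $\beta\downarrow 1$ costs only $(\log|\Lambda|)^{1-\alpha}$. Meanwhile Corollary~\ref{cor:uniform-integrability} gives $\mathbb{E}[V^\beta_{2^k}(Q)^{r\alpha}]\lesssim k^{-r/2}$ for every $\beta>1$. This is exactly where the $\eps$ in (ii) comes from.

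Finally, on a shell where $\log|\Lambda|\asymp\log N$, the resulting bound $\mathbb{E}[\max_{\mathbf{n}\in\Lambda}|\widehat{M}(\mathbf{n})|^r]\lesssim(\log N)^{-r(\alpha-1/2)}$ is only polynomial in $\log N$. Borel--Cantelli must therefore be run over doubly exponential shells $2^{2^m}\le\|\mathbf{n}\|_\infty<2^{2^{m+1}}$ rather than dyadic ones. On those shells polynomial moment bounds suffice, and no super-polynomial tail is needed.
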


\begin{rem}
Arguin and Hamdan in \cite{AH2025} argued that the decay of $\widehat{M}(\mbf{n})$ might be of order $(\log(\|\mbf{n}\|_\infty))^{-1 +\eps}$,
but following the near-diagonal heuristic underlying their prediction leads us to a different expected scale.
Indeed, their heuristic suggests that $|\widehat{M}(\mbf{n})|^2$ should be comparable in size to $\iint_{|x-y| \le \Delta} M(\dd x) M(\dd y)$ 
where $\log \left(\Delta^{-1}\right) \asymp \log \left(\|\mbf{n}\|_{\infty}\right)$.
The size of this iterated integral should then be comparable to the $2$-variation of $M$ with mesh size of order $\Delta$,
and Theorem \ref{thm:critical-to-supercritical} identifies $\log (\Delta^{-1})$ as the appropriate renormalisation.
We believe that the quantitative rates in our Theorem \ref{thm:fourier-decay} (both for tightness and almost-sure convergence) are essentially optimal.
\end{rem}

\begin{rem}
Theorem \ref{thm:fourier-decay} was stated for $\star$-scale invariant fields in Definition \ref{def:star-scale-invariant} for simplicity,
but our method could be easily adapted to treat more general log-correlated Gaussian fields.
For instance:
\begin{itemize}
\item The smoothness condition on the seed kernel $\rho$ can be significantly weakened,
and one can study the slightly more general class of almost-$\star$-scale invariant fields introduced in \cite{JSW2019};
\item For log-correlated Gaussian fields that admit similar white-noise decompositions as $\star$-scale invariant fields 
(e.g. those arising from the so-called cone constructions), 
our approach could be adapted without any essential changes to the argument.
In particular, these results should hold for the Gaussian free field restricted to the unit circle.
\item For log-correlated Gaussian fields with covariance kernel $-\log|x-y| + g(x, y)$ where $g$ satisfies some minimal Sobolev regularity,
they can be locally decomposed as the sum of an almost-$\star$-scale invariant field and an independent H\"older-continuous Gaussian field (see \cite{JSW2019}).
One would need an estimate slightly different from that in Lemma \ref{lem:control-coarse-field} to take into account of the extra H\"older perturbation, 
but otherwise our technique should extend in a similar way.
\end{itemize}

\noindent Our choice of the domain $Q = (0,1)^d$ is also purely due to convenience,
and one should be able to extend the result to more general setting 
provided that the critical mass of thin boundary layers admits suitable fractional moment estimates, 
as in Corollary \ref{cor:boundary-momentGMC} and Lemma \ref{lem:error-random-shift}.
\end{rem}

\begin{rem}\label{rem:independence}
This work was developed independently of the recent works \cite{CCFG2026,OV2026}.
Although finite-range scale decompositions are a common ingredient,
our mechanism for obtaining Fourier decay is different.
Rather than using pathwise barrier truncation and removing large-mass cells as in \cite{CCFG2026}, 
we work directly with the limiting critical chaos and employ a local randomisation argument.
At a high level, we introduce small independent shifts of the fine chaos $M^{(t)}$ to generate random Fourier phases.
This reduces the Fourier problem to fractional moment bounds for power variations and yields quantitative almost-sure Fourier decay,
thereby linking the Fourier argument to the central estimates of this paper.
We invite interested readers to compare these different approaches to the Fourier analysis of critical GMCs.
\end{rem}

\subsubsection{Discussion}
Although the applications concern different aspects of critical GMC, 
they are linked by two complementary consequences of our power-variation analysis.
The distributional limit forces sufficiently large masses on arbitrarily small sets, 
thereby establishing the optimality of the modulus of continuity and,
through capacity estimates, also the existence of non-empty essential spectrum for critical LQG surfaces.
The uniform fractional moment bounds, on the other hand, control Fourier cancellation through local randomisation.
Power variations thus provide a unified approach to the study of the concentration and cancellation properties here.

Our study of power variations is inspired by \cite[Theorem 5]{BKNSW2014},
where an analogous distributional limit for critical lognormal cascades is established
up to an implicit deterministic normalising sequence bounded above and away from zero.
Their proof uses a generating-function recursion that relies on the exact hierarchical structure of cascades, which is absent in our setting.
Our approach dispenses with any implicit normalising sequence and yields various quantitative upgrades.

A key methodological ingredient in our proof of stable convergence is quantitative Poisson approximation in the presence of finite-range dependence.
The arguments in \cite{MRV2016,BH2025b} for the convergence of supercritical GMCs rely on introducing buffer regions in order to obtain conditional independence,
and the contribution of these buffer regions must be controlled separately.
By contrast,
our analysis of power variations retains the full partition and controls neighbouring interactions directly using mixed-moment bounds for masses on adjacent sets.
It would be interesting to see whether our techniques could be adapted to give a more streamlined proof of convergence of supercritical GMCs.

\subsection{Organisation}
The paper is organised as follows, with the main proof ideas discussed in the respective sections.
\begin{itemize}
\item Section \ref{sec:preliminaries} collects various preliminary results.
We summarise some standard facts about log-correlated Gaussian fields, critical Gaussian multiplicative chaos and Sobolev space theory,
and explain various probabilistic results including a quantitative Poisson approximation theorem.
\item Section \ref{sec:analysis-beta-variation} is dedicated to the proof of Laplace transform estimates and distributional convergence of renormalised power variation,
which relies on the aforementioned Poisson approximation results.
We shall first estimate the error terms ($b_{n, 1}$ and $b_{n,2}$),
and then provide uniform bounds and limiting results for the conditional mean $\lambda_n$ that imply our desired results.

\item In Section \ref{sec:essential-spectrum} we apply these results to establish Theorem \ref{thm:optimal-mod-cont} regarding the optimality of the logarithmic modulus of continuity
and extend it to more general log-correlated Gaussian fields.
This is then used to verify Theorem \ref{thm:cLQG-spec} that asserts that $0$ lies in the essential spectrum of critical LQG surfaces.

\item Finally, Section \ref{sec:Fourier} verifies Theorem \ref{thm:fourier-decay} for the Fourier decay of critical GMCs.
We shall present a key moment estimate which allows us to deduce our quantitative results,
and then explain how this key estimate can be reduced to earlier uniform fractional moment bounds for power variations via a local randomisation trick.
\end{itemize}

\paragraph{Acknowledgements}
The author would like to thank David Croydon and Naotaka Kajino for useful discussions and their hospitality during his visit to 
the Research Institute for Mathematical Sciences at Kyoto University in Fall 2025 when part of this work was done.
He would also like to thank Jad Hamdan for helpful conversations about his work and related heuristics,
and William Verreault for comments on an earlier draft of the manuscript.
This research is supported by a start-up fund from the Faculty of Science, The University of Hong Kong, 
a start-up allowance from the Croucher Foundation, and Hong Kong RGC Grant GRF 17305226.

\section{Preliminaries}\label{sec:preliminaries}
\subsection{Log-correlated Gaussian fields and $\star$-scale invariance}
Let us recall the definition of a $\star$-scale invariant field.
\begin{defi}\label{def:star-scale-invariant}
A centred Gaussian field $X$ on $\mb{R}^d$ is called a $\star$-scale invariant field if its covariance is given by
\begin{align*}
\mb{E}[X(x) X(y)] =: K(x, y) = \int_0^\infty \rho(e^u (x-y)) du = \int_1^\infty \rho(u(x-y))\frac{du}{u}, \qquad \forall x, y \in \mb{R}^d
\end{align*}

\noindent where $\rho: \mb{R}^d \to [0, \infty)$ is a positive definite function satisfying the following properties:
\begin{itemize}
\item[1.] $\rho$ is rotationally symmetric and $\rho(0) = 1$.\footnote{
We occasionally abuse the notation and write $\rho'$ and $\rho''$, having in mind that $\rho$ may be identified as a function on $\mb{R}$ by rotational symmetry.
}
\item[2.] $\rho \in C_c^\infty(\mb{R}^d)$ and $\mr{supp}(\rho) \subset B(0, R/2) \subset \mb{R}^d$ for some $R > 0$.
\end{itemize}

\end{defi}
Let us also recall that a $\star$-scale invariant field $X$ admits a martingale decomposition,
i.e. there exists a family of centred continuous random fields $(X_t(\cdot))_{t \ge 0}$
such that the following are true:
\begin{itemize}
\item For any $0 \le s < t$ and $x, y \in \mb{R}^d$, we have
\begin{align*}
\mb{E}[X_s(x)X_t(y)] = \int_0^{\min(s, t)} \rho(e^u (x-y)) du = \int_1^{e^{\min(s, t)}} \rho(u(x-y)) \frac{\dd u}{u}.
\end{align*}

\noindent Since the covariance kernel is translation invariant, we write $K_t(x-y) := \mb{E}[X_t(x) X_t(y)]$.

\item Denote by $\mc{F}_t:= \sigma\left(X_u(x), u \le t, x \in \mb{R}^d\right)$ the natural filtration.
Then for any $0 \le s < t$, the random field $X_{s, t} := X_t - X_s$ is independent of $\mc{F}_s$.
\item For any fixed $x \in \mb{R}^d$, the process $(X_t(x))_{t \ge 0}$ is a standard Brownian motion.
\item For any $0 \le s < t$, we have $X_{s, t}(\cdot) \overset{(d)}{=} X_{t-s}(e^{s}\cdot)$.
In particular, for any $A, B \subset \mb{R}^d$, 
$(X_{s, t}(x))_{x \in A}$ is independent of $(X_{s, t}(y))_{y \in B}$ as soon as $\inf_{x \in A, y \in B} |x - y|> R e^{-s} / 2$.
\end{itemize}

The collection $(X_t)_{t \ge 0}$ is also called the white-noise decomposition of $X$,
and it is well-known that it satisfies the following upper bound estimates.

\begin{lem}\label{lem:bound-acosta}
For any compact set $K \subset \mb{R}^d$,
there exists some constant $C \in (0, \infty)$ such that for all $t > 0$ sufficiently large, we have 
\begin{align}\label{eq:max-tail-acosta}
\mb{P}\left(\sup_{x \in K} \left|X_t(x)\right| > \left(\sqrt{2d}t - \frac{3/2}{\sqrt{2d}}\log t\right)+ u\right)
\le C e^{-u/C} \qquad \forall u \ge 0.
\end{align}

\noindent In particular, if $a \in (0, \tfrac{3}{2})$,
then 
\begin{align*}
 \mb{P}\left(\sup_{x \in K} |X_t(x)| > \sqrt{2d} t - \frac{a}{\sqrt{2d}} \log t\right) \xrightarrow[t \to \infty]{} 0.
\end{align*}
\end{lem}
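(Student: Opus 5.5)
The plan is to reduce the supremum over $K$ to a union bound over a fine lattice of scale $e^{-t}$, and then to pay for the gap between the naive first-moment threshold and the true $\tfrac{3}{2}$ threshold with a barrier (ballot-type) argument.

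\textbf{Discretisation.} By rotational symmetry we only need to bound $\sup_{x\in K}X_t(x)$ from above; the lower tail is handled identically with $X$ replaced by $-X$. Set $m_t := \sqrt{2d}t - \frac{3/2}{\sqrt{2d}}\log t$. The field $X_t$ has smooth covariance $K_t$, and $\mb{E}[|\nabla X_t(x)|^2] = -\Delta K_t(0) \lesssim e^{2t}$, since $\int_0^t e^{2u}\,\dd u\, |\rho''(0)|$ is of this order. Cover $K$ by $O(e^{dt})$ cubes of side $e^{-t}$. On each cube, the oscillation of $X_t$ has Gaussian tails of order $1$; this follows from a standard Dudley/Borell--TIS bound after rescaling by $e^{t}$, using the stationarity $X_{s,t}(\cdot)\overset{(d)}{=}X_{t-s}(e^s\cdot)$. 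So it suffices to control the maximum of $X_t$ over the cube centres, up to an additive $O(1)+u$ whose probability decays exponentially.

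\textbf{Barrier argument.} A plain union bound over $e^{dt}$ points only gives the threshold $\sqrt{2d}t - \frac{1/2}{\sqrt{2d}}\log t$. To reach $m_t$, I view $s\mapsto X_s(x)$, $s\le t$, as a Brownian motion for each fixed $x$ and introduce the barrier event
\begin{align*}
G:=\left\{X_s(x)\le \sqrt{2d}s + u + 1 + \log(1+ s\wedge(t-s)) \ \ \forall s\le t,\ \forall x \in K\right\}.
\end{align*}

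First I show that $\mb{P}(G^c)\le Ce^{-u/C}$. For this I apply a union bound over integer times $s$ and over $e^{ds}$ lattice points at scale $e^{-s}$, using the Gaussian tail $\exp(-(\sqrt{2d}s+u+\log(\cdot))^2/2s)\approx e^{-ds}e^{-\sqrt{2d}u}(\cdot)^{-\sqrt{2d}}$. The resulting sum over $s$ is finite because $\sqrt{2d}\ge\sqrt 2>1$. The fluctuations between integer times are absorbed by the same discretisation estimate as above.

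Next, on $G$, I apply a first-moment bound at the final time: the expected number of lattice points $x$ with $X_t(x)\ge m_t+u$ and whose path stays below the barrier. After a Girsanov tilt by drift $\sqrt{2d}$, this reduces to the probability that a Brownian bridge from $0$ to $-\frac{3/2}{\sqrt{2d}}\log t + u$ stays below a curve of logarithmic shape. The ballot theorem bounds this by roughly $(1+u)^2 t^{-3/2}$ times the bridge density. Multiplying by the $e^{dt}$ lattice points and the Gaussian density $e^{-dt}t^{3/2}e^{-\sqrt{2d}u}t^{-1/2}$ gives a bound $\lesssim (1+u)^2 e^{-\sqrt{2d}u}$ (up to the usual $\sqrt t$ bookkeeping), which is at most $Ce^{-u/C}$.

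\textbf{Main obstacle.} The difficulty is making the ballot estimate rigorous uniformly with the barrier and the endpoint both moving with $u$; I would quote the standard Bramson-type bridge estimates for this. Equivalently, since the proof at this point in the paper is probably a citation, I would invoke the known result of Acosta on the maximum of $\star$-scale invariant fields, which gives the tightness of $\max_K X_t - m_t$ with an exponential right tail.

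\textbf{The ``in particular'' claim.} For $a<\tfrac32$, write $\sqrt{2d}t-\frac{a}{\sqrt{2d}}\log t = m_t + u_t$ with $u_t = \frac{3/2-a}{\sqrt{2d}}\log t\to\infty$. Then \eqref{eq:max-tail-acosta} gives the bound $Ce^{-u_t/C}\to0$.
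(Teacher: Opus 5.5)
Your fallback is the paper's proof. The paper checks that $\sup_{x,y\in K}\bigl|K_t(x-y)+\log\max(e^{-t},|x-y|)\bigr|\le C$ and $\mb{E}[|X_t(x)-X_t(y)|^2]\lesssim e^t|x-y|$ on $K$. Both follow directly from $K_t(z)=\int_0^t\rho(e^u z)\,\dd u$ together with $\rho(0)=1$ and the smoothness and compact support of $\rho$. It then covers $K$ by finitely many unit cubes, takes a union bound, and quotes Acosta's Theorem 1.1. If you take this route, you should verify these two hypotheses explicitly rather than only name the result. Your deduction of the ``in particular'' statement via $u_t=\frac{3/2-a}{\sqrt{2d}}\log t$ is exactly the paper's.

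The barrier argument you call ``equivalent'', however, does not prove \eqref{eq:max-tail-acosta}.

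\textbf{Where the barrier fails.} Your barrier $\sqrt{2d}s+u+1+\log(1+s\wedge(t-s))$ ends at $\sqrt{2d}t+u+1$. That is $b:=\frac{3/2}{\sqrt{2d}}\log t+1$ above the target $m_t+u$. After the tilt, the path starts $a:=u+1$ below the barrier and is conditioned to end $b$ below it. The bridge probability of staying below is of order $ab/t$. It is at least this, because your barrier dominates the flat one at height $a$, whose bridge probability is $1-e^{-2ab/t}$. Hence the first moment is
$e^{dt}\cdot t^{-1/2}e^{-dt}t^{3/2}e^{-\sqrt{2d}u}\cdot ab/t\asymp(1+u)(\log t)\,e^{-\sqrt{2d}u}$,
which is unbounded in $t$. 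Your ``$(1+u)^2t^{-3/2}$'' and the ``usual $\sqrt t$ bookkeeping'' hide precisely this factor. Such a barrier only gives tightness of $\max_K X_t-m_t$ up to $O(\log\log t)$. That suffices for the ``in particular'' claim but not for the uniform tail bound.

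\textbf{Why it is not easily repaired.} To remove the $\log t$, the barrier must follow the line $s\mapsto\frac{s}{t}m_t$, so that the endpoint sits $O(1)$ below it. But then your crude union bound for $\mb{P}(G^c)$ breaks down near $s=t$. At time $s=t-r$ it gives about $s^{-1/2}t^{\frac32\cdot\frac st}e^{-\sqrt{2d}u}(1+r)^{-C\sqrt{2d}}\approx t\,e^{-\sqrt{2d}u}(1+r)^{-C\sqrt{2d}}$. You need a first-crossing decomposition in which the ballot estimate is applied again up to the crossing time. That is the actual Bramson-type content of Acosta's theorem, and it is absent from your sketch.

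\textbf{Discretisation.} This step is also misstated. The maximum over $e^{dt}$ cells of the $O(1)$-scale oscillations is of order $\sqrt t$, not $O(1)$, so it cannot be removed globally. It must be absorbed cell by cell. This uses that the oscillation is nearly independent of the centre value, as in Lemma \ref{lem:lgf-oscillations}, and that its Gaussian tail beats the $e^{\sqrt{2d}v}$ cost of lowering the threshold by $v$.
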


\begin{proof}
It is straightforward to check that there exists some constant $C \in (0, \infty)$ independent of $t \ge 1$ such that 
\begin{align*}
\sup_{x, y \in K} \left| K_t(x-y) + \log \max(e^{-t}, |x-y|) \right|  &\le C \\
\text{and} \qquad \mb{E}\left[|X_t(x) - X_t(y)|^2 \right] = 2\left[K_t(0) - K_t(x-y)\right] &\lesssim e^t |x-y| \quad \forall x, y \in K.
\end{align*}

\noindent By covering $K$ with finitely many cubes of unit side-length and applying the union bound,
the estimate \eqref{eq:max-tail-acosta} follows immediately from \cite[Theorem 1.1]{Aco2014}.
In particular, 
\begin{align*}
\mb{P}\left(\sup_{x \in K} |X_t(x)| > \sqrt{2d} t - \frac{a}{\sqrt{2d}} \log t\right)
&\le C\exp\left(-\frac{3/2-a}{C\sqrt{2d}} \log t\right) \xrightarrow[t \to \infty]{} 0,
\end{align*}

\noindent as claimed.
\end{proof}

We will need various estimates to control oscillations of the field $X_t(\cdot)$ at microscopic scale.
Before doing so, let us recall the following classical results in the theory of Gaussian processes.

\begin{lem}\label{lem:Gaussian-analysis}
Let $(\mc{T}, d_{\mc{T}})$ be a metric space, and $\left(\mc{G}_t\right)_{t \in \mc{T}}$ be a separable centred Gaussian process indexed by $\mc{T}$ such that 
\begin{align*}
\mb{P}\left(|\mc{G}_s - \mc{G}_t| \ge u\right) \le 2\exp\left(-\frac{u^2}{2d_{\mc{T}}(s, t)^2}\right) \qquad \forall s, t \in \mc{T}.
\end{align*}

\noindent The following statements hold.
\begin{itemize}
\item For any $u > 0$, we have 
\begin{align}
\label{eq:Borell-TIS}
\mb{P}\left(\sup_{t \in \mc{T}} \mc{G}_t \ge \mb{E} \left[\sup_{t \in \mc{T}} \mc{G}_t\right] + u\right)
& \le \exp\left( - \frac{u^2}{2 \sigma_\mc{T}^2}\right) 
\end{align}

\noindent where $\sigma_{\mc{T}}^2 := \sup_{t \in \mc{T}}\mb{E}[\mc{G}_t^2]$ (see e.g. \cite[Theorem 5.8]{BLG2013}).

\item The expected value of the supremum can be bounded by Dudley's entropy integral (see e.g. \cite[equation (2.41)]{Tal2021}):
there exists some absolute constant $C \in (0, \infty)$ such that
\begin{align}
\label{eq:Dudley-entropy1}
\mb{E}\left[\sup_{t \in \mc{T}} \mc{G}_t\right] & \le C \int_0^\infty \sqrt{\log N(\mc{T}, d_{\mc{T}}, r)}dr
\end{align}

\noindent where $N(\mc{T}, d_{\mc{T}}, r)$ is the covering number of $\mc{T}$, 
i.e. the smallest number $N$ such that the metric space $\mc{T}$ can be covered by $N$ open balls of radius $r$ (with respect to the metric $d_{\mc{T}}$). 
Moreover, for any $\delta > 0$, we have (see e.g. \cite[Theorem 1.42]{Tal2021})
\begin{align}
\label{eq:Dudley-entropy2}
\mb{E}\left[\sup_{s, t \in \mc{T}: d_{\mc{T}}(s, t) \le \delta} |\mc{G}_s - \mc{G}_t|\right] & \le C \int_0^{\delta} \sqrt{\log N(\mc{T}, d_{\mc{T}}, r)}dr
\end{align}
\end{itemize}
\end{lem}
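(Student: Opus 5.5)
Both assertions of Lemma \ref{lem:Gaussian-analysis} are classical, and the cited sources already contain full proofs. The plan is therefore to reduce each statement to a finite index set and then recall the standard argument. \textbf{Step 1 (reduction to finite index sets).} By separability, there is a countable set $\mc{T}_0 \subset \mc{T}$ such that $\sup_{t \in \mc{T}} \mc{G}_t = \sup_{t \in \mc{T}_0} \mc{G}_t$ almost surely, and likewise for the modulus of continuity in \eqref{eq:Dudley-entropy2}. Write $\mc{T}_0$ as an increasing union of finite sets $F_k$. By monotone convergence it suffices to prove \eqref{eq:Borell-TIS}, \eqref{eq:Dudley-entropy1} and \eqref{eq:Dudley-entropy2} with $\mc{T}$ replaced by $F_k$, with constants independent of $k$. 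This needs two monotonicity facts: the covering numbers of a subset are controlled by those of $\mc{T}$ at a comparable radius, and $\sigma_{F_k}^2 \le \sigma_{\mc{T}}^2$. If $\mb{E}[\sup_{\mc{T}} \mc{G}_t] = \infty$, then \eqref{eq:Borell-TIS} is vacuous, so we may assume it is finite.

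\textbf{Step 2 (Borell--TIS).} For a finite set $F = \{t_1, \dots, t_N\}$, write $(\mc{G}_{t_i})_i = A Z$, where $Z$ is a standard Gaussian vector in $\mb{R}^N$ and $AA^T$ is the covariance matrix. The map $z \mapsto \max_i (Az)_i$ is Lipschitz with constant $\max_i |A_{i\cdot}| = \sigma_F$. Gaussian concentration for Lipschitz functions then gives
\begin{align*}
\mb{P}\left(\max_i (AZ)_i \ge \mb{E}\left[\max_i (AZ)_i\right] + u\right) \le e^{-u^2/(2\sigma_F^2)}.
\end{align*}
This concentration can be obtained from the Gaussian isoperimetric inequality, or from the Maurey--Pisier/Ornstein--Uhlenbeck semigroup interpolation with the sharp constant. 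Passing $k \to \infty$ as in Step 1 yields \eqref{eq:Borell-TIS}.

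\textbf{Step 3 (Dudley's bound by chaining).} For \eqref{eq:Dudley-entropy1}, fix $t_0 \in F$ and use $\mb{E}[\sup_t \mc{G}_t] = \mb{E}[\sup_t (\mc{G}_t - \mc{G}_{t_0})]$. For each $j$, choose a $2^{-j}$-net $\mc{N}_j$ of minimal cardinality $N(\mc{T}, d_{\mc{T}}, 2^{-j})$, and let $\pi_j(t)$ be a nearest point of $\mc{N}_j$ to $t$. On the finite set $F$ the chain $\pi_j(t)$ eventually equals $t$, so we may telescope:
\begin{align*}
\mc{G}_t - \mc{G}_{\pi_{j_0}(t)} = \sum_{j > j_0} \left(\mc{G}_{\pi_j(t)} - \mc{G}_{\pi_{j-1}(t)}\right).
\end{align*}
At level $j$ there are at most $|\mc{N}_j|^2$ distinct links, each with $d_{\mc{T}}(\pi_j(t), \pi_{j-1}(t)) \le 3 \cdot 2^{-j}$. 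The subgaussian increment hypothesis and a union bound give, for each $j$,
\begin{align*}
\mb{E}\left[\max_{\text{links at level } j} |\mc{G}_{\pi_j(t)} - \mc{G}_{\pi_{j-1}(t)}|\right] \lesssim 2^{-j}\sqrt{\log |\mc{N}_j|}.
\end{align*}
Summing over $j$ and comparing the sum with the integral yields \eqref{eq:Dudley-entropy1}; the coarsest level, where a single ball covers everything, contributes nothing. For \eqref{eq:Dudley-entropy2}, take two points $s, t$ with $d_{\mc{T}}(s,t) \le \delta$ and choose $j_0$ with $2^{-j_0} \approx \delta$. We bound $|\mc{G}_s - \mc{G}_t|$ by the two chains from level $j_0$ down to $s$ and to $t$, plus the single link $|\mc{G}_{\pi_{j_0}(s)} - \mc{G}_{\pi_{j_0}(t)}|$. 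That link joins points at distance $\lesssim \delta$, and there are at most $|\mc{N}_{j_0}|^2$ such pairs, so it contributes $\lesssim \delta \sqrt{\log N(\mc{T}, d_{\mc{T}}, \delta)} \lesssim \int_0^\delta \sqrt{\log N(\mc{T}, d_{\mc{T}}, r)}\, dr$. The chains only involve levels finer than $\delta$, so they contribute the same integral.

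\textbf{Main obstacle.} None of these steps is genuinely hard. The only delicate points are the bookkeeping in the restricted-modulus version of chaining in Step 3 and the separability/measurability reduction in Step 1. In the paper we would simply cite \cite[Theorem 5.8]{BLG2013} and \cite[Theorem 1.42]{Tal2021} rather than reproduce these arguments.
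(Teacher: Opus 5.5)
Your proposal is correct and matches the paper. The paper gives no argument of its own beyond citing \cite[Theorem 5.8]{BLG2013} for the Borell--TIS bound and \cite{Tal2021} for the two Dudley bounds, and your sketch (reduction to finite index sets, Gaussian concentration for the $\sigma_F$-Lipschitz map $z\mapsto\max_i(Az)_i$, then chaining) is the standard argument behind those references. One small repair: if $\mc{N}_j$ is a minimal net of $\mc{T}$ rather than of $F$, the chain $\pi_j(t)$ need not eventually equal $t\in F$, so either take nets of $F$ itself (using $N(F,d_{\mc{T}},r)\le N(\mc{T},d_{\mc{T}},r/2)$) or keep the remainder $\mc{G}_t-\mc{G}_{\pi_J(t)}$, whose contribution $\mb{E}\max_{t\in F}|\mc{G}_t-\mc{G}_{\pi_J(t)}|\lesssim |F|\,2^{-J}$ vanishes as $J\to\infty$.
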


We now record an exponential moment estimate that controls oscillations at microscopic scale.
\begin{lem}\label{lem:lgf-oscillations}
Let $a, L > 0$ be fixed.
There exists some constant $C \in (0, \infty)$ independent of $t > 0$ such that
\begin{align*}
\mb{E}\left[\exp\left(a \sup_{|x| \le Le^{-t}} |X_t(x) - X_t(0)|\right) \bigg| X_t(0) = z\right]
& \le C \exp \left(\frac{C|z|}{t}\right) \qquad \forall z \in \mb{R}.
\end{align*}
\end{lem}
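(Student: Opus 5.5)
We prove Lemma \ref{lem:lgf-oscillations} by decomposing the field around its value at the origin. For $|x| \le Le^{-t}$ we write the Gaussian regression
\begin{align*}
X_t(x) - X_t(0) = \left(\frac{K_t(x)}{K_t(0)} - 1\right) X_t(0) + Y_t(x),
\qquad Y_t(x) := X_t(x) - \frac{K_t(x)}{K_t(0)} X_t(0).
\end{align*}
Here $K_t(0) = t$, and $(Y_t(x))_x$ is a centred Gaussian field independent of $X_t(0)$. Conditioning on $X_t(0)=z$ then gives
\begin{align*}
\sup_{|x|\le Le^{-t}}|X_t(x)-X_t(0)|
\le \frac{|z|}{t}\sup_{|x| \le Le^{-t}}|t - K_t(x)| + \sup_{|x|\le Le^{-t}}|Y_t(x)|,
\end{align*}
with the law of the second term unaffected by the conditioning. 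The proof therefore splits into a deterministic bound on the drift coefficient and a uniform exponential moment bound for $\sup|Y_t|$.

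\textbf{Drift coefficient.} We have
\begin{align*}
t - K_t(x) = \int_0^t \left(1-\rho(e^u x)\right)du .
\end{align*}
Since $\rho$ is smooth, rotationally symmetric and $\rho(0)=1$, we get $|1-\rho(y)| \le C\min(|y|^2,1)$. For $|x| \le Le^{-t}$ this gives
\begin{align*}
t-K_t(x) \le C\int_0^t L^2 e^{2(u-t)}\,du \le CL^2,
\end{align*}
uniformly in $t$. The first term is therefore at most $C|z|/t$, and after exponentiation it yields exactly the factor $e^{aC|z|/t}$.

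\textbf{Fluctuation term.} This is the main step. We use the scaling $\tilde Y(w) := Y_t(e^{-t}w)$ for $|w|\le L$ and compute its increments. The variance is $\mb{E}[\tilde Y(w)^2] = K_t(0) - K_t(e^{-t}w)^2/K_t(0) \le 2(t-K_t(e^{-t}w)) \le C$. The increment variance is bounded by
\begin{align*}
\mb{E}[|X_t(x)-X_t(y)|^2] = 2\int_0^t (1-\rho(e^u(x-y)))\,du \le C|w-w'|^2
\end{align*}
plus the projection correction. That correction equals $(K_t(x)-K_t(y))^2/t$, which is $\le C|w-w'|^2/t$ by smoothness of $\rho$ (the gradient of $K_t$ at scale $e^{-t}$ is $O(e^{t})$ in $x$, i.e. $O(1)$ in $w$).

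Hence $d(w,w') \le C|w-w'|$, uniformly in $t$. Dudley's bound \eqref{eq:Dudley-entropy1} on the ball of radius $L$ in $\mb{R}^d$ gives $\mb{E}\sup|\tilde Y| \le C$. Borell--TIS \eqref{eq:Borell-TIS} with $\sigma^2\le C$ then gives Gaussian tails for $\sup|\tilde Y|$ around its mean, applied to $\pm \tilde Y$. It follows that
\begin{align*}
\mb{E}\left[e^{a\sup|Y_t|}\right] \le C(a,L)
\end{align*}
independently of $t$. Note that the sup over $|\tilde Y|$ is controlled by $\sup \tilde Y + \sup(-\tilde Y)$ plus the value at a point.

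\textbf{Conclusion and main obstacle.} Combining the two pieces gives the bound $C\exp(C|z|/t)$. The expected obstacle is ensuring that all constants are uniform in $t$, including small $t$. For $t\le 1$ the field $X_t$ is smooth with bounded covariance, so the same argument applies directly: the projection term is then bounded by smoothness rather than by $1/t$, and one can absorb it into $C$. The delicate part is only verifying the increment metric bound uniformly in $t$, which follows from the Taylor expansion of $\rho$ at the origin.
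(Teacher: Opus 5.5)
Your proposal is correct and follows essentially the same route as the paper: the same Gaussian regression on $X_t(0)$ (your $Y_t$ is exactly the paper's $R_t$), a deterministic bound on the drift coefficient, and Dudley's bound plus Borell--TIS for the independent residual. Your quadratic bound on $1-\rho$ gives a Lipschitz canonical metric in rescaled coordinates, whereas the paper's linear bound gives a H\"older-$1/2$ one; either suffices. One small correction: the projection term $(K_t(x)-K_t(y))^2/t$ enters the increment variance of $Y_t$ with a minus sign (cf.\ \eqref{eq:natural-distance}). It can therefore simply be dropped, which also removes your concern about uniformity as $t \to 0$.
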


\begin{proof}
Consider 
\begin{align*}
\mb{E}\left[\left(X_t(x) - X_t(0)\right)X_t(0)\right]
= K_t(x) - K_t(0) =: r_t(x).
\end{align*}

\noindent Then we can write 
\begin{align*}
  X_t(x) - X_t(0) = \frac{r_t(x)}{t} X_t(0) + R_t(x)
\end{align*}

\noindent where $\left(R_t(x)\right)_{|x| \le Le^{-t}}$ is independent of $X_t(0)$.
In particular,
\begin{align*}
& \mb{E}\left[\exp\left(a \sup_{|x| \le Le^{-t}} |X_t(x) - X_t(0)|\right) \bigg| X_t(0) = z\right]\\
& \qquad \le \exp\left(\frac{a \sup_{|x| \le L e^{-t}} |r_t(x)|}{t} |z|\right)
\mb{E}\left[\exp\left(a \sup_{|x| \le Le^{-t}} |R_t(x)|\right)\right]
\end{align*}

\noindent where $|r_t(x)| \le \int_0^t |\rho(0) - \rho(e^u x)| du \lesssim  e^t|x| \le L$.
Therefore, we just need to bound the remaining exponential moment involving $R_t(\cdot)$.
For this purpose, note that
\begin{align*}
\mb{E}[R_t(x) R_t(y)] 
&= \mb{E}\left[ \left(X_t(x) - \frac{K_t(x)}{t} X_t(0)\right)\left(X_t(y) - \frac{K_t(y)}{t} X_t(0)\right)\right]\\
& = K_t(x-y) - \frac{K_t(x)K_t(y)}{t}.
\end{align*}

\noindent By the Borell-TIS concentration inequality \eqref{eq:Borell-TIS}, we have 
\begin{align*}
\qquad \mb{P}\left(\sup_{|x| \le L e^{-t}} R_t(x) \ge \mb{E}\left[\sup_{|x| \le L e^{-t}} R_t(x)\right] + u\right) \le e^{-u^2 / (2\sigma^2)} \qquad \forall u \ge 0
\end{align*}

\noindent where 
\begin{align*}
\sigma^2 := \sup_{|x| \le L e^{-t}} \mb{E}[R_t(x)^2]
& = \sup_{|x| \le L e^{-t}} \frac{1}{t} \left[K_t(0)^2 - K_t(x)^2\right]\\
& \le \left( \frac{K_t(0) + \sup_{|x| \le L e^{-t}} K_t(x)}{t}\right)\left(\sup_{|x| \le L e^{-t}} \int_0^t |\rho(0) - \rho(e^u x)|du\right)
\end{align*}

\noindent is uniformly bounded in $t \ge 0$.
Moreover, one can easily check that
\begin{align}\label{eq:natural-distance}
d_t(x, y)^2 := \mb{E} \left[|R_t(x) - R_t(y)|^2\right]
= 2 \left[K_t(0) - K_t(x-y)\right] - \frac{1}{t} \left[K_t(x) - K_t(y)\right]^2
\lesssim e^t|x-y|.
\end{align}

\noindent
This means that the covering number of $\mc{ T}_t:= [-Le^{-t}, Le^{-t}]^d$ satisfies $N(\mc{T}_t, d_t, r) \le \max\left(1, C(L/r^2)^d\right)$ for some absolute constant $C \in (0, \infty)$,
and Dudley's entropy bound \eqref{eq:Dudley-entropy1} shows that
\begin{align*}
  \sup_{t \ge 0} \mb{E}\left[\sup_{|x| \le L e^{-t}} R_t(x)\right]
  \lesssim \sup_{t \ge 0} \int_0^\infty \sqrt{\log N(\mc{T}_t, d_t, r)} dr <\infty.
\end{align*}

\noindent The Gaussian tail in \eqref{eq:Borell-TIS} then implies the uniform boundedness of
\begin{align*}
\mb{E}\left[\exp\left(a \sup_{|x| \le Le^{-t}} |R_t(x)|\right)\right]
& \le \mb{E}\left[\exp\left(a \sup_{|x| \le Le^{-t}} R_t(x)\right)+\exp\left(a \sup_{|x| \le Le^{-t}} \left(-R_t(x)\right)\right)\right]\\
&= 2\mb{E}\left[\exp\left(a \sup_{|x| \le Le^{-t}} R_t(x)\right)\right]
\end{align*}

\noindent for any fixed $a > 0$, and this concludes our proof.
\end{proof}

Last but not least, we record a result concerning the decomposition of log-correlated Gaussian fields.
\begin{theo}[{\cite[Theorem A]{JSW2019}}]\label{thm:field-decomposition}
Let $U \subset \mb{R}^d$ be a domain.
Suppose for $j \in \{1, 2\}$, 
\begin{align*}
C_{j}(x, y) = -\log |x-y| + g_j(x, y) \qquad \forall x, y \in U
\end{align*}

\noindent are non-negative definite kernels with $g_j \in H_{\mr{loc}}^{d+\eps}(U \times U)$ for some $\eps > 0$.\footnote{
We say $f$ belongs to $H_{\mr{loc}}^{s}(U)$ for $s > 0$ if its product with any $\chi \in C_c^\infty(U)$
belongs to the Sobolev space $H^{s}(\mb{R}^d)$ whose norm is defined via Fourier transform in general; 
we refer the interested readers to \cite{JSW2019} for further details.
}
Then for any bounded sub-domain $D$ with $\overline{D} \subset U$,
one can construct (on a suitable probability space) two centred Gaussian fields $G_j$ with covariance kernels $C_j$,
such that  $G_1 - G_2$ is almost surely H\"older-continuous on $\overline{D}$.
\end{theo}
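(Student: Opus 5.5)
Since this is quoted from \cite[Theorem A]{JSW2019}, in the paper it is simply imported. If I had to prove it, I would \emph{couple both fields through a common fine-scale component}. I would look for a single log-correlated field $Y$ carrying all the singular (high-frequency) part of both kernels, and two \emph{regular} Gaussian fields $Z_1, Z_2$ such that $G_j := Y + Z_j$ has covariance $C_j$ on $\overline{D}$. Then $G_1 - G_2 = Z_1 - Z_2$. Taking $Z_1, Z_2$ independent of $Y$ (they may be correlated with each other or not), it suffices that each $Z_j$ is almost surely H\"older on $\overline{D}$.

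First I would localise. Fix a cutoff $\chi \in C_c^\infty(U)$ with $\chi \equiv 1$ on a neighbourhood of $\overline{D}$. I would then work with $\chi(x)\chi(y) C_j(x,y)$ viewed as the kernel of an operator on $L^2(\mb{R}^d)$, which is still non-negative definite. For the common fine field I would take a truncated $\star$-scale invariant kernel in the spirit of Definition \ref{def:star-scale-invariant}:
\begin{align*}
K^{(T)}(x,y) := \chi(x)\chi(y)\int_T^\infty \rho(e^u(x-y))\,\dd u ,
\end{align*}
with $T$ large, so that only scales below $e^{-T}$ are retained. Near the diagonal, $\int_T^\infty \rho(e^u(x-y))\dd u = -\log|x-y| - T + O(1)$ up to a smooth correction. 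Hence
\begin{align*}
P_j := \chi\otimes\chi\, C_j - K^{(T)} = \chi\otimes\chi\,\bigl(g_j + \text{smooth}\bigr) + (\text{terms supported away from the diagonal at scale } e^{-T}).
\end{align*}
I expect $P_j$ to lie in $H^{d+\eps}_{\mr{loc}}$ as a kernel.

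The main step, and the main obstacle, is to show that $P_j$ is non-negative definite for $T$ large. Without positivity, $P_j$ is not a covariance and the decomposition collapses. My first attempt would be on the Fourier side, treating the kernels as pseudodifferential operators.
\begin{itemize}
\item The operator with kernel $-\log|x-y|$ (localised) has principal symbol $c_d|\xi|^{-d}$.
\item The truncated kernel $K^{(T)}$ has symbol $c_d|\xi|^{-d}\bigl(1-\widehat{\phi}(e^{-T}\xi)\bigr)$ up to lower-order terms, where $\widehat\phi$ is a bump.
\item The perturbation $g_j \in H^{d+\eps}$ contributes an operator that is smoothing of order strictly better than $-d$, i.e.\ it maps $H^{-d/2}$ into $H^{d/2+\eps'}$.
\end{itemize}
So at frequencies $|\xi| \gg e^{T}$ the difference $P_j$ is dominated by $c_d|\xi|^{-d}\widehat\phi(e^{-T}\xi)$ together with the $O(|\xi|^{-d-\eps'})$ error. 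I would then use a G\aa rding-type inequality: the quadratic form of $C_j$ is non-negative by hypothesis, and removing $K^{(T)}$ only removes mass at frequencies above $e^T$. On that high-frequency range the error from $g_j$ is a relatively small perturbation of $c_d|\xi|^{-d}$. If exact positivity fails, I would fix it by adding a small $T$-dependent smooth positive kernel to each $P_j$ and subtracting it from $K^{(T)}$, slightly changing the scale cutoff profile. The freedom in $\rho$ and $T$ should make this possible.

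Once $P_j \ge 0$, I would let $Z_j$ be a centred Gaussian field with covariance $P_j$, independent of $Y \sim K^{(T)}$. The H\"older continuity of $Z_j$ follows from the kernel regularity. Since $P_j \in H^{d+\eps}$ on a bounded set, Sobolev embedding gives $P_j \in C^{\eps''}$ jointly. Therefore
\begin{align*}
\mb{E}|Z_j(x) - Z_j(y)|^2 = P_j(x,x) + P_j(y,y) - 2P_j(x,y) \lesssim |x-y|^{\eps''}.
\end{align*}
Kolmogorov's continuity criterion, or Dudley's bound \eqref{eq:Dudley-entropy2} in Lemma \ref{lem:Gaussian-analysis}, then gives a H\"older-continuous version on $\overline{D}$. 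Finally, $G_j := Y + Z_j$ restricted to $D$ has covariance $C_j$ (because $\chi\equiv 1$ there), and $G_1 - G_2 = Z_1 - Z_2$ is almost surely H\"older continuous.
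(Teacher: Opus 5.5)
The paper gives no proof of this statement; it imports it from \cite{JSW2019}. Judged on its own terms, your proposal has a genuine gap at exactly the step you flag: $P_j=\chi\otimes\chi\,C_j-K^{(T)}$ need not be non-negative definite, for any $T$ or any choice of $\rho$.

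\textbf{Why the high-frequency heuristic fails.} For $|\xi|\gg e^T$, what remains in $P_j$ is $c_d|\xi|^{-d}\widehat\phi(e^{-T}\xi)$, which decays faster than any power, plus the $g_j$-contribution, which decays only polynomially and has no sign. You compare that error with $c_d|\xi|^{-d}$, but that part has been handed to $Y$, so the comparison is irrelevant. Adding a smooth positive kernel cannot repair this, since its quadratic form is $O(\|f\|_{H^{-N}}^2)$ for every $N$. Removing such a kernel from $K^{(T)}$ would in any case only move the positivity problem onto $Y$.

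\textbf{A counterexample to the scheme on a general $D$.} Take a kernel $C$ with smooth $g$ (e.g.\ a $\star$-scale invariant one) and $0\neq\theta\in C_c^\infty(D)$ normalised so that $\langle C\theta,\theta\rangle=1$. Set $C'(x,y):=C(x,y)-u(x)u(y)$ with $u:=C\theta$, which is smooth.
\begin{itemize}
\item $C'$ is the covariance of $G-\langle G,\theta\rangle u$, so it is non-negative definite.
\item It has the form $-\log|x-y|+g'$ with $g'-g$ smooth, so it satisfies the hypotheses.
\item It satisfies $\langle C'\theta,\theta\rangle=0$.
\end{itemize}
Since $\widehat{k^{(T)}}(\xi)=\int_T^\infty e^{-ud}\widehat\rho(e^{-u}\xi)\,\dd u>0$ (because $\widehat\rho\ge 0$ and $\widehat\rho(0)>0$), we get $\langle (C'-K^{(T)})\theta,\theta\rangle=-\langle K^{(T)}\theta,\theta\rangle<0$ for every $T$. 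The same holds for any nonzero stationary choice of the common field $Y$. A common fine field can only be split off locally; this is the separate, harder decomposition from \cite{JSW2019} into an almost $\star$-scale invariant field plus a H\"older field, which the paper itself describes as local. It is not the mechanism behind this statement.

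\textbf{The missing idea.} Only the difference $C_1-C_2$ matters, and you should couple through a common \emph{majorant} rather than a common \emph{minorant}.
\begin{itemize}
\item Let $T:=\chi\otimes\chi\,(g_1-g_2)\in H^{d+\eps}(\mb{R}^{2d})$, so that $T=C_1-C_2$ on $D\times D$.
\item Cauchy--Schwarz on the Fourier side gives $|\langle Tf,f\rangle|\le \|T\|_{H^{d+\eps}}\|f\|_{H^{-(d+\eps)/2}}^2$.
\item Hence $T_-:=\|T\|_{H^{d+\eps}}(1-\Delta)^{-(d+\eps)/2}$ and $T_+:=T+T_-$ are both non-negative.
\item Both are covariances of H\"older fields: $T_-$ has a Bessel-potential kernel, and $T$ is H\"older by Sobolev embedding. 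Your Kolmogorov/Dudley step applies verbatim here.
\end{itemize}
On $D$ one then has $C_1+T_-=C_2+T_+$. Take $G_1$ and $Z_-\sim T_-$ independent; then $\Phi:=G_1+Z_-$ has covariance $C_2+T_+$. By disintegration (gluing), realise independent $G_2\sim C_2$ and $Z_+\sim T_+$ with $G_2+Z_+=\Phi$. Then $G_1-G_2=Z_+-Z_-$ is H\"older on $\overline D$. Positivity is automatic here because covariances are only ever added. Your scheme instead requires subtracting a fixed singular kernel from each $C_j$, which is exactly what cannot be guaranteed.
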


\subsection{Critical Gaussian multiplicative chaos}
Given a $\star$-scale invariant field $X$,
the critical Gaussian multiplicative chaos can be constructed as the weak$^*$ limit of
\begin{align*}
M(\dd x) := \lim_{t \to \infty} \sqrt{t} e^{\gamma_c X_t(x) - dt}\dd x
= \lim_{t \to \infty} M_t(\dd x)
\end{align*}

Recall that $X_{s, t}:= X_{t} - X_s$ for any $0 \le s \le t$.
We can therefore write
\begin{align}
\notag 
M(\dd x) 
& = \lim_{t \to \infty} \sqrt{t+s} e^{\gamma_c X_{t+s}(x) - d(t+s)}\dd x\\
\notag 
&= e^{\gamma_c X_{s}(x) - ds} \lim_{t \to \infty} \sqrt{t} e^{\gamma_c X_{s, s+t}(x) - dt}\dd x\\
\label{eq:coarse-fine-decomposition}
&=: W_s(x) M^{(s)}(\dd x) 
\qquad \text{with}  \qquad W_s(x) = e^{\gamma_c X_{s}(x) - ds}
\end{align}

\noindent where $M^{(s)}(\dd x)$ is independent of $\mc{F}_s$.
Moreover, one has $X_{s, t}(e^{-s}\cdot) \overset{(d)}{=} X_{t-s}(\cdot)$ by inspecting the covariance structure,
and this shows that the measure $M^{(s)}(\dd x)$, after rescaling the spatial variable by $e^{-s}$,
becomes a copy of $M(\dd x)$ up to a factor of $e^{-ds}$ coming from the Jacobian in the change of coordinate.

We now recall the universal tail profile of critical Gaussian multiplicative chaos.
\begin{theo}\label{thm:critical-tail}
Let $\varphi \ge 0$ be a bounded continuous function on $Q$. Then
\begin{align}
\mb{P}\left(\int_Q \varphi(x) M(\dd x) > u\right)
\overset{u \to \infty}{\sim} \frac{\int_Q \varphi(x)\dd x}{u\sqrt{\pi d}}.
\end{align}
\end{theo}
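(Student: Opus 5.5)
The statement is essentially the main theorem of \cite{Won2019}, specialised to $\star$-scale invariant fields and weighted by a test function $\varphi$. I would not reprove it from scratch. Instead I would check that the hypotheses of \cite{Won2019} hold and indicate how the weight $\varphi$ enters. For orientation, this is the argument I would follow.

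\emph{Step 1: Tauberian reduction.} A tail $\mb{P}(Y>u)\sim c/u$ is equivalent, by a Karamata-type argument, to the truncated first-moment asymptotic
\begin{align*}
\mb{E}\left[Y \ind_{\{Y \le u\}}\right] \sim c\log u \qquad \text{as } u \to \infty,
\end{align*}
where $Y := M(\varphi)$. This in turn is equivalent to $\mb{E}[1-e^{-qY}] \sim c\, q\log(1/q)$ as $q\to 0^+$. So it suffices to compute $\mb{E}[Y\ind_{\{Y\le u\}}]$ to leading order and show that it equals $\frac{\int_Q\varphi}{\sqrt{\pi d}}\log u$.

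\emph{Step 2: rooted measure and reduction to one point.} I would write
\begin{align*}
\mb{E}\left[Y\ind_{\{Y\le u\}}\right]=\int_Q\varphi(x)\,\lim_{t\to\infty}\sqrt t\,\mb{E}\left[e^{\gamma_c X_t(x)-dt}\ind_{\{Y\le u\}}\right]\dd x
\end{align*}
and apply Girsanov's theorem. Under the tilt, $X_\cdot(x)$ becomes a Brownian motion with drift $\gamma_c$, and the field elsewhere is shifted by $\gamma_c K(x-\cdot)$.

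Using the coarse--fine decomposition \eqref{eq:coarse-fine-decomposition}, under the tilt $Y$ is dominated by $e^{\gamma_c B_s - \gamma_c^2 s + ds}$-type factors times an independent copy of the chaos on the ball of radius $e^{-s}$. Writing $B_s := \gamma_c s - X_s(x)$ for the centred process along the spine, $Y$ is roughly $e^{-\sqrt{2d}\,\inf_{s} B_s}$ times $O(1)$ random factors.

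The constraint $\{Y\le u\}$ then becomes, to leading order, the barrier event $\{\inf_s B_s \gtrsim -\log u/\sqrt{2d}\}$. The factor $\sqrt t$ times the probability that Brownian motion stays above a level $a$ up to time $t$ converges to $\sqrt{2/\pi}\,a$. With $a=\log u/\sqrt{2d}$ this gives
\begin{align*}
\sqrt{2/\pi}\cdot\frac{\log u}{\sqrt{2d}}=\frac{\log u}{\sqrt{\pi d}},
\end{align*}
which is exactly the claimed constant. Integrating against $\varphi$ then yields the factor $\int_Q\varphi$; continuity and boundedness of $\varphi$ allow uniform-in-$x$ control, and the boundary of $Q$ contributes nothing at leading order.

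\emph{Step 3: error control (main obstacle).} The hardest part is justifying that the multiplicative $O(1)$ random factors do not change the leading order. These factors are the chaos of the decorrelated far field and the fine-scale copy near the spine. They shift the barrier only by $O(1)$, hence change $\mb{E}[Y\ind_{\{Y\le u\}}]$ by $o(\log u)$. To see this I would use the Bessel-3/ballot estimates for the conditioned spine together with fractional moment bounds on the off-spine mass, including the oscillation control in Lemma \ref{lem:lgf-oscillations} and the maximum bound in Lemma \ref{lem:bound-acosta}. This is precisely the content of \cite{Won2019}, whose assumptions (a log-correlated kernel with a white-noise decomposition) are satisfied by Definition \ref{def:star-scale-invariant}, so I would cite it at this point.
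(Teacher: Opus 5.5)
Your bottom line, citing \cite{Won2019} after checking its hypotheses, is exactly the paper's proof. The paper's argument is that citation plus two observations: $Q$ is Jordan measurable, and $\star$-scale invariant kernels satisfy the covariance decomposition condition required in \cite{Won2019}. Those two facts are what you should record as the hypothesis check, rather than the existence of a white-noise decomposition. The route you sketch ``for orientation'', however, would not prove the statement, because Step~1 is not an equivalence.

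The asymptotic $\mb{E}[Y\ind_{\{Y\le u\}}]\sim c\log u$ (equivalently, by Karamata, $\mb{E}[1-e^{-qY}]\sim c\,q\log(1/q)$) follows from $\mb{P}(Y>u)\sim c/u$ but does not imply it. The truncated mean is slowly varying, which is the index-$0$ case in which the monotone density theorem only yields $u\,\mb{P}(Y>u)=o(\log u)$. For instance, $\mb{P}(Y>u)=c\bigl(1+\tfrac12\sin\sqrt{\log u}\bigr)/u$ for large $u$ is non-increasing and gives $\mb{E}[Y\ind_{\{Y\le u\}}]=c\log u+O(\sqrt{\log u})$, yet $u\,\mb{P}(Y>u)$ oscillates. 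The pointwise tail is genuinely needed downstream: Lemma~\ref{lem:laplace-estimate} evaluates $\mb{P}(Y\ge (s/q)^\alpha)$, and with only averaged information $\mb{E}[1-e^{-qY^\beta}]/q^\alpha$ need not converge. So the barrier computation with the test function $\ind_{\{Y\le u\}}$ identifies the right constant but not the tail; your own remark that the $O(1)$ factors cost $o(\log u)$ is exactly the precision that is insufficient.

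To make the heuristic a genuine route, run the rooted-measure computation directly on $\mb{P}(Y>u)=\mb{E}[Y\cdot Y^{-1}\ind_{\{Y>u\}}]$. Set $I_t:=-\inf_{s\le t}B_s$, which has density $2(2\pi t)^{-1/2}e^{-x^2/(2t)}$ on $(0,\infty)$, and take $Y\approx e^{\gamma_c I_t}$. Then $\sqrt t\,\mb{E}[e^{-\gamma_c I_t}\ind_{\{I_t>a\}}]\to\sqrt{2/\pi}\,e^{-\gamma_c a}/\gamma_c$, which equals $1/(\sqrt{\pi d}\,u)$ at $a=\log u/\gamma_c$. Heuristically, a multiplicative $O(1)$ factor $Z$ in $Y\approx Z e^{\gamma_c I_t}$ now cancels exactly rather than perturbing at order $o(\log u)$.

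Alternatively, prove the increment version $\mb{E}[Y\ind_{\{u<Y\le\lambda u\}}]\to c\log\lambda$ for each fixed $\lambda>1$. Summing over the geometric scales $\lambda^k u$ and letting $\lambda\downarrow 1$ then forces $u\,\mb{P}(Y>u)\to c$.
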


\begin{proof}
This follows immediately from the tail universality result in \cite{Won2019},
since $Q$ is obviously Jordan measurable (i.e. $\partial Q$ has zero Lebesgue measure)
and the decomposition condition for the covariance kernel is satisfied by (regular) $\star$-scale invariant fields
(the readers may refer to the discussions in that paper for the details).
\end{proof}

The following uniform estimate for positive moments under Seneta-Heyde renormalisation is standard, see e.g. \cite[Appendix B.4]{DRSV2014}.
\begin{lem}\label{lem:SH-moment-uniform}
For each $q \in (0, 1)$, we have
\begin{align*}
\sup_{t \ge 0} \mb{E}\left[M_t(Q)^q\right] < \infty.
\end{align*}
\end{lem}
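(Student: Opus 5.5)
The plan is to use the standard argument for uniform fractional moments of Seneta--Heyde approximations of critical chaos, as in \cite[Appendix B.4]{DRSV2014}. Its structure has two parts: a barrier (truncation) argument for the derivative martingale, and a comparison between $M_t$ and $D_t$ on the good event. Write $M_t(Q) = \sqrt{t}\int_Q e^{\gamma_c X_t(x) - dt}\dd x$. Fix $\beta>0$ large and introduce the event
\[
E_\beta := \left\{\sup_{s\ge 0}\sup_{x \in \overline Q}\left(\gamma_c X_s(x) - \gamma_c^2 s\right) \le \beta\right\}.
\]
Here $\gamma_c^2 s = 2ds$ is the expected first-order maximum at time $s$. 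By Lemma \ref{lem:bound-acosta}, together with a union bound over integer times and a control of oscillations between integer times (Lemma \ref{lem:lgf-oscillations}/Borell--TIS), one gets $\mb P(E_\beta^c)\le C e^{-c\beta}$.

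\textbf{Split by the barrier event.} Use the bound
\[
\mb E[M_t(Q)^q] \le \mb E[M_t(Q)^q\ind_{E_\beta}] + \mb E[M_t(Q)^q\ind_{E_\beta^c}].
\]
For the first term, apply Jensen ($q<1$) to get $\mb E[M_t(Q)\ind_{E_\beta}]^q$. Then compute the first moment by Girsanov: under the tilt by $e^{\gamma_c X_t(x)-dt}$, the process $s\mapsto \gamma_c X_s(x) - 2ds$ becomes $\gamma_c B_s$ with $B$ a standard Brownian motion. Dropping all points except $x$, the barrier at $x$ costs
\[
\sqrt t\,\mb P\left(\sup_{s\le t}\gamma_c B_s \le \beta\right)\lesssim \sqrt t\cdot \beta/\sqrt t = O(\beta),
\]
uniformly in $t$. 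Hence $\mb E[M_t(Q)\ind_{E_\beta}]\le C\beta$.

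\textbf{The complementary term.} This is the main obstacle: $\mb E[M_t(Q)^q\ind_{E_\beta^c}]$ cannot be bounded by Hölder without a higher moment, which is unavailable uniformly. I would handle it by a layered decomposition. Write
\[
\mb E[M_t^q] \le \sum_{k\ge 0}\mb E\left[M_t^q\ind_{E_{k+1}\setminus E_k}\right],
\]
with $E_k := E_{\beta_0 + k}$. On each layer, apply Hölder with exponents $1/q$ and $1/(1-q)$:
\[
\mb E\left[M_t\ind_{E_{k+1}}\right]^q\,\mb P(E_k^c)^{1-q} \le (C(\beta_0+k+1))^q e^{-c(1-q)k}.
\]
This is summable in $k$ uniformly in $t$.

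\textbf{Remaining gaps.} The Girsanov computation under the barrier must handle the full field (not just the point $x$), but the barrier event only restricts, so dropping the other constraints is legitimate; the one-point ballot estimate is uniform in $t$. The remaining technical point is the tail estimate $\mb P(E_\beta^c)\le Ce^{-c\beta}$ uniformly over all times $s\ge0$. This follows by summing \eqref{eq:max-tail-acosta} over $s\in\mb N$: the deficit $\frac{3/2}{\sqrt{2d}}\log s$ gives extra room $\gtrsim \log s$, yielding summable bounds $Ce^{-(\beta + c\log s)/C}$ for $s$ large. The interpolation between integer times is controlled by the Dudley/Borell--TIS bounds in Lemma \ref{lem:Gaussian-analysis}.
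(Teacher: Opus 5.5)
The paper gives no proof of its own here; it only cites \cite[Appendix B.4]{DRSV2014}. Your architecture is the standard barrier argument, and two of its steps are correct. The one-point Girsanov/ballot computation gives $\mb{E}[M_t(Q)\ind_{E_\beta}] \le \sqrt{t}\,\mb{P}(\sup_{s\le t}B_s \le \beta/\gamma_c) \lesssim \beta$ uniformly in $t$. The layered H\"older bound $\mb{E}[M_t(Q)^q\ind_{E_{k+1}\setminus E_k}] \le \mb{E}[M_t(Q)\ind_{E_{k+1}}]^q\,\mb{P}(E_k^c)^{1-q}$ is also fine, provided you keep the term $\mb{E}[M_t(Q)^q\ind_{E_{\beta_0}}]$.

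\textbf{The gap.} The gap is the input you call technical. The layered sum needs a $t$-independent quantitative tail for the all-time barrier, e.g.\ $\mb{P}(E_\beta^c)\le Ce^{-c\beta}$; decay faster than $\beta^{-(1+q)/(1-q)}$ would suffice. Your sketch does not deliver this, for two reasons.

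First, at an integer time $s$, the event $\{\sup_{\overline{Q}}(\gamma_c X_s-\gamma_c^2 s)>\beta\}$ says the maximum exceeds $m_s+u_s$, where $m_s=\gamma_c s-\frac{3}{2\gamma_c}\log s$ and $u_s=\frac{\beta}{\gamma_c}+\frac{3}{2\gamma_c}\log s$. So \eqref{eq:max-tail-acosta} gives $Ce^{-\beta/(C\gamma_c)}s^{-3/(2C\gamma_c)}$. The constant $C$ in Lemma \ref{lem:bound-acosta} (from \cite{Aco2014}) is unspecified, and it can never beat the true rate: $1/C\le\gamma_c$. Summability in $s$ needs $1/C>\frac{2}{3}\gamma_c$, i.e.\ an essentially sharp exponent, which the lemma does not provide.

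Second, interpolation between integer times cannot be done by Dudley/Borell--TIS over the whole cube. The quantity $\sup_{x\in\overline{Q}}\sup_{s\in[n,n+1]}|X_s(x)-X_n(x)|$ is of order $\sqrt{2dn}$, because there are about $e^{dn}$ nearly independent unit-variance pieces. This swamps the $\frac{3}{2\gamma_c}\log n$ of room. Lemma \ref{lem:lgf-oscillations} only controls spatial oscillation at scale $e^{-t}$ near one point, so it does not help. Union-bounding local estimates over $e^{dn}$ boxes only reproduces a first-moment bound of order $n^{-1/2}e^{-\gamma_c u}$, which is not summable in $n$.

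\textbf{How to close it.} You need two extra ingredients.

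(i) The sharp right tail $\mb{P}(\max_{\overline{Q}}X_s\ge m_s+u)\le C(1+u)e^{-\gamma_c u}$. This is known for $\star$-scale invariant fields from the literature on the maximum, but it is a barrier argument in its own right and does not follow from Lemma \ref{lem:bound-acosta}. At integer times it gives $\lesssim(1+\beta+\log s)e^{-\beta}s^{-3/2}$, which is summable.

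(ii) A reflection argument for non-integer times. Let $\tau$ be the first $s\in[n,n+1]$ with $\sup_{\overline{Q}}(X_s-\gamma_c s)\ge\beta/\gamma_c$, and let $x^*$ be an $\mc{F}_\tau$-measurable point where this is attained. By the strong Markov property of the independent increments, $X_{\tau,n+1}(x^*)$ is conditionally centred Gaussian. Hence $\mb{P}(\tau\le n+1)\le 2\,\mb{P}(\max_{\overline{Q}}X_{n+1}\ge\gamma_c(n+1)+\beta/\gamma_c-\gamma_c)$.

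Combining (i) and (ii), and handling the finitely many small $n$ directly by Borell--TIS, gives $\mb{P}(E_\beta^c)\lesssim(1+\beta)e^{-\beta}$. With that bound your layered sum converges uniformly in $t$. Alternatively, you can cite this all-time barrier estimate from the derivative-martingale literature.
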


The moment estimate below may also be found in \cite{DRSV2014}.
\begin{lem}\label{lem:moment-criticalGMC}
For each $q \in (0, 1)$, there exists $C = C(q) \in (0, \infty)$ such that 
\begin{align}
\mb{E}\left[M(B(x, r))^q\right] \le C r^{\xi(q)}, \qquad \xi(q) := 2dq - dq^2
\end{align}

\noindent uniformly for all $0 < r \le 1$.
\end{lem}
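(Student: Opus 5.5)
The plan is to use the coarse--fine decomposition \eqref{eq:coarse-fine-decomposition} at the scale of the ball. We may assume $r \le e^{-1}$, since for $r \in (e^{-1},1]$ the claim follows from the case $r = 1$ with a larger constant. Set $s := \log(1/r)$, so that $B(x,r) = B(x, e^{-s})$. By translation invariance of $X$ we may take $x = 0$. Then
\begin{align*}
M(B(0,r)) = \int_{B(0,r)} W_s(y) M^{(s)}(\dd y) \le \Big(\sup_{|y| \le e^{-s}} W_s(y)\Big) M^{(s)}(B(0,e^{-s})).
\end{align*}
The two factors are independent, because $M^{(s)}$ is independent of $\mc{F}_s$. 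Raising to the power $q$ and taking expectations therefore splits the problem into a coarse-field estimate and a fine-chaos estimate.

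\emph{Fine part.} By the scaling property recorded after \eqref{eq:coarse-fine-decomposition}, $M^{(s)}(B(0,e^{-s})) \overset{(d)}{=} e^{-ds} M(B(0,1))$. Hence
\begin{align*}
\mb{E}\big[M^{(s)}(B(0,e^{-s}))^q\big] = e^{-dqs}\, \mb{E}\big[M(B(0,1))^q\big].
\end{align*}
The last expectation is finite. To see this, cover $B(0,1)$ by finitely many translates of $Q$ and use subadditivity of $u \mapsto u^q$. For each translate, combine Lemma \ref{lem:SH-moment-uniform}, translation invariance, and Fatou's lemma along the weak$^*$ convergence $M_t \to M$; the open cube $Q$ is handled by lower semicontinuity, or one can enlarge it slightly.

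\emph{Coarse part.} Write $X_s(y) = X_s(0) + (X_s(y) - X_s(0))$ and condition on $X_s(0) = z$. Lemma \ref{lem:lgf-oscillations}, with $a = q\gamma_c$ and $L = 1$, gives
\begin{align*}
\mb{E}\Big[\sup_{|y|\le e^{-s}} W_s(y)^q\Big] \le C e^{-dqs}\, \mb{E}\Big[e^{q\gamma_c X_s(0) + C|X_s(0)|/s}\Big].
\end{align*}
Since $X_s(0) \sim N(0,s)$, we bound $e^{C|z|/s} \le e^{Cz/s} + e^{-Cz/s}$. This gives at most
\begin{align*}
2e^{(q\gamma_c + C/s)^2 s/2} = 2\exp\Big(\tfrac{q^2\gamma_c^2}{2}s + O(1)\Big) = O\big(e^{dq^2 s}\big),
\end{align*}
using $\gamma_c^2 = 2d$. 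So the coarse factor is $O(e^{-dqs + dq^2 s})$.

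\emph{Conclusion and main obstacle.} Multiplying the two bounds gives
\begin{align*}
\mb{E}[M(B(x,r))^q] \lesssim e^{-2dqs + dq^2 s} = r^{2dq - dq^2},
\end{align*}
uniformly in $x$ and $r$. The only delicate point is the control of the supremum of $W_s$ over a ball of radius $e^{-s}$. The oscillation there is not independent of $X_s(0)$, but Lemma \ref{lem:lgf-oscillations} shows the correction costs only $e^{C|z|/s}$. Integrated against the Gaussian density of variance $s$, this changes the exponent by $O(1)$ and so only affects the constant. Everything else is routine bookkeeping of independence and scaling.
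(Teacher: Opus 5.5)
Your proof is correct. It differs from the paper in that the paper does not prove this lemma at all: it defers to \cite{DRSV2014}. You instead give a self-contained argument built only from tools the paper already has.

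The main step is the coarse--fine decomposition at scale $s=\log(1/r)$. The independent fine chaos $M^{(s)}$ supplies the Jacobian factor $e^{-dqs}$ through the scaling identity $M^{(s)}(B(0,e^{-s}))\overset{(d)}{=}e^{-ds}M(B(0,1))$. The coarse field supplies $e^{-dqs+dq^2 s}$ through the Gaussian computation, using $\gamma_c^2=2d$.

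The only non-trivial point is that $X_s$ is not constant on $B(0,e^{-s})$. You handle this by conditioning on $X_s(0)$ and invoking Lemma \ref{lem:lgf-oscillations}. Its $e^{C|z|/s}$ correction shifts the exponent only by $q\gamma_c C + C^2/(2s)$. Your restriction to $s\ge 1$ is exactly what keeps the $C^2/(2s)$ term bounded, and the range $r\in(e^{-1},1]$ is correctly absorbed using $\xi(q)>0$.

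Arguments of this kind in the GMC literature usually treat the non-constancy of the coarse field differently. They compare $X_s$ on the ball with a single Gaussian of variance $s-O(1)$ plus an independent field, via Kahane's convexity inequality. That route gives matching lower bounds, and extends to negative moments by flipping convexity, with essentially no extra work. Your route yields only the upper bound, since it bounds the mass by the supremum of $W_s$, but that is all the lemma asks for. It also has the advantage of needing nothing beyond what the paper has already set up, and it makes uniformity in $x$ immediate from stationarity.
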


The small ball result above leads to the following control on the mass of the boundary of the unit cube with respect to critical chaos.
\begin{cor}\label{cor:boundary-momentGMC}
Denote $\partial_h Q := \{x \in [-2, 2]^d: \|x - \partial Q\|_\infty\le h\}$ for any $h > 0$.
For each $q \in (0, 1)$, there exists $C = C(q) \in (0, \infty)$ such that 
\begin{align}
\mb{E}\left[M(\partial_h Q)^q\right]
\le C h^{1 - d(1-q)^2} \qquad \forall h \in [0, 1].
\end{align}
\end{cor}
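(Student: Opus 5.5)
Cover the boundary layer $\partial_h Q$ by small cubes and apply the small-ball estimate of Lemma~\ref{lem:moment-criticalGMC} to each of them, using subadditivity of $y \mapsto y^q$ for $q \in (0,1)$. The boundary $\partial Q$ is a union of $2d$ faces of dimension $d-1$. For $h \in (0,1]$ the $h$-neighbourhood (in $\|\cdot\|_\infty$) of $\partial Q$ is contained in a union of $N_h \le C_d h^{-(d-1)}$ cubes of side-length $4h$, or equivalently balls $B(x_i, r)$ with $r = 2\sqrt{d}\,h$. These come from tiling each face at mesh $h$ and thickening in the normal direction.

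If $r > 1$, which only happens for $h$ of order one, we instead cover by a bounded number of unit balls. Lemma~\ref{lem:moment-criticalGMC} then gives $\mb{E}[M(B(x,1))^q] \le C$, so the claimed bound holds trivially with an adjusted constant, since $h^{1-d(1-q)^2}$ is bounded below on $h \in [c,1]$. The case $h = 0$ is covered by $h \to 0$ and monotonicity, because $M(\partial Q)=0$ given non-atomicity and the bound at small $h$.

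For small $h$, subadditivity gives
\begin{align*}
\mb{E}\left[M(\partial_h Q)^q\right] \le \sum_{i=1}^{N_h} \mb{E}\left[M(B(x_i, r))^q\right] \le C h^{-(d-1)} \, (2\sqrt{d}h)^{2dq - dq^2}.
\end{align*}
Here Lemma~\ref{lem:moment-criticalGMC} is applied uniformly in the centre $x_i$. This is legitimate by translation invariance of the $\star$-scale invariant field, and the centres lie in the fixed compact set $[-2,2]^d$. The resulting exponent is
\begin{align*}
2dq - dq^2 - (d-1) = 1 - d(1 - 2q + q^2) = 1 - d(1-q)^2,
\end{align*}
which is exactly the claimed exponent.

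The argument is a direct union bound and has no genuine obstacle. The only points needing care are that the covering number of the boundary layer is $O(h^{-(d-1)})$ with balls of radius comparable to $h$, and that the constant in Lemma~\ref{lem:moment-criticalGMC} is uniform over centres in a compact set.
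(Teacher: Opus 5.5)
Your proposal is correct and matches the paper's proof. Both cover $\partial_h Q$ by $O(h^{-(d-1)})$ balls of radius $2\sqrt{d}\,h$, apply subadditivity of $y\mapsto y^q$ together with Lemma~\ref{lem:moment-criticalGMC}, and absorb the case of $h$ of order one into the constant. One small correction on the case $h=0$: it is monotonicity plus the small-$h$ bound, not non-atomicity, that forces $M(\partial Q)=0$ when the exponent is positive.
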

\begin{proof}
By adjusting the constant $C$,
it suffices to establish the moment estimates for all $h$ sufficiently small.
Cover $\partial_h Q$ with $O(h^{-(d-1)})$ balls of radii $2\sqrt{d} h$ with centres $x \in \partial Q$.
Since $x \mapsto x^q$ is subadditive when $q \in (0, 1)$,
the desired claim follows from Lemma \ref{lem:moment-criticalGMC} with $r = 2\sqrt{d}h$.
\end{proof}

We end this subsection with the following estimate that essentially states that the mass of two neighbouring sets with respect to the same critical chaos cannot be simultaneously huge.
This will be handy since we do not have the convenient exact branching/independence structures seen in the world of multiplicative cascades.
\begin{lem}[{\cite[Lemma 3.2]{Won2019}}]\label{lem:mom-hyperplane}
Let $B_1, B_2$ be bounded disjoint subsets separated by a hyperplane, 
i.e. there exists some $v \in \mb{R}^d \setminus \{0\}$ and $c \in \mb{R}$ such that 
\begin{align*}
B_1 \subset \{x \in \mb{R}^d: \langle x, v\rangle \le c\}
\qquad \text{and} \qquad 
B_2 \subset \{x \in \mb{R}^d: \langle x, v\rangle \ge c\}.
\end{align*}

\noindent Then for any $h \in [\tfrac{1}{2}, \tfrac{1}{2} + \tfrac{1}{2\sqrt{d}})$, we have 
\begin{align*}
\mb{E}\left[ \left(M(B_1) M(B_2)\right)^h\right] < \infty.
\end{align*}
\end{lem}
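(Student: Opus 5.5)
By rotation and translation invariance of the $\star$-scale invariant field ($\rho$ is rotationally symmetric and $K$ depends only on $x-y$), I will assume the hyperplane is $\{x_1 = 0\}$, with $B_1 \subset \{x_1 \le 0\}$ and $B_2 \subset \{x_1 \ge 0\}$. Since the sets are bounded, I can also take both inside a fixed large cube. Note that $h < \tfrac12 + \tfrac{1}{2\sqrt d} \le 1$, so $x \mapsto x^h$ is subadditive. The difficulty is concentrated near the hyperplane, so I will decompose dyadically in the distance to it. Set $S_k^- := \{x_1 \in [-2^{-k}, -2^{-k-1})\}$ and $S_k^+ := \{x_1 \in (2^{-k-1}, 2^{-k}]\}$, and ignore the hyperplane itself, which is $M$-null. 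Ordering the two layer indices by which point is farther from the hyperplane gives the pointwise bound
\begin{align*}
M(B_1)M(B_2) \le \sum_{k \ge k_0} \Big[ M(B_1 \cap S_k^-)\, M\big(B_2 \cap \{x_1 \le 2^{-k}\}\big) + M(B_2 \cap S_k^+)\, M\big(B_1 \cap \{x_1 \ge -2^{-k}\}\big) \Big].
\end{align*}
By subadditivity of $x\mapsto x^h$, it then suffices to show that each summand has $h$-th moment at most $C 2^{-k\delta}$ for some $\delta>0$.

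Fix $k$ and put $r = 2^{-k}$. I will tile $S_k^-$ by $O(r^{-(d-1)})$ cubes $A$ of side $r$. For each such $A$, I tile the slab $\{0 \le x_1 \le r\}$ by cubes $A'$ of side $r$, and keep only the $O(1)$ cubes $A'$ within distance $Lr$ of $A$. The remaining far pairs are handled by the same argument, with a better exponent coming from the smaller shared covariance. Using subadditivity again reduces matters to bounding $\mathbb{E}[(M(A)M(A'))^h]$. Here $A$ and $A'$ have side $r$ and distance between $r/2$ and $Lr$, since $A\subset\{x_1\le -r/2\}$ and $A'\subset\{x_1\ge 0\}$. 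For this I use the decomposition \eqref{eq:coarse-fine-decomposition} at scale $s = \log(1/r) + c$, with $c$ chosen so that $Re^{-s}/2 < r/2$. Then $M^{(s)}|_A$ and $M^{(s)}|_{A'}$ are independent, and after rescaling by $e^{s}$ each is a copy of $M$ on an $O(1)$ cube, times $e^{-ds}$. I write $W_s(x) = W_s(x_A)\exp(\gamma_c(X_s(x) - X_s(x_A)))$ with $x_A$ the centre of $A$. By Lemma \ref{lem:lgf-oscillations} and Hölder's inequality, the oscillation factors cost only a bounded factor (times $e^{C|X_s(x_A)|/s}$, which is harmless). By independence and Lemma \ref{lem:SH-moment-uniform}, or Lemma \ref{lem:moment-criticalGMC} with $h<1$, the fine masses contribute $O(r^{2dh})$ up to constants.

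The coarse part is $\mathbb{E}[(W_s(x_A)W_s(x_{A'}))^h] \asymp \mathbb{E}[e^{2h\gamma_c X_s(x_A) - 2hds}]$, because $X_s(x_A)$ and $X_s(x_{A'})$ differ by an $O(1)$-variance Gaussian. This equals $e^{s(4dh^2 - 2dh)} \asymp r^{2dh - 4dh^2}$. Multiplying by the fine-field factor $r^{2dh}$ and by the $O(r^{-(d-1)})$ cubes $A$ gives the layer bound
\begin{align*}
\mathbb{E}\Big[\big(M(S_k^-)\,M(\{0\le x_1\le 2^{-k}\})\big)^h\Big] \lesssim r^{4dh(1-h) - (d-1)} .
\end{align*}
The exponent is positive iff $d(2h-1)^2 < 1$, i.e. $h < \tfrac12 + \tfrac1{2\sqrt d}$, which is exactly the stated range. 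The lower bound $h \ge \tfrac12$ is only for convenience, since smaller $h$ follows by Jensen. Summing the geometric series over $k$ then finishes the proof.

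The main obstacle is the step making the heuristic rigorous. I must handle the conditioning on $\mathcal{F}_s$ with the non-Gaussian factor $\sqrt{t}$ in the Seneta--Heyde renormalisation, and control the coarse field uniformly over $A$ and $A'$ rather than at centres only. The plan is to condition on $X_s(x_A)=z$ and use Lemma \ref{lem:lgf-oscillations} for the sup over an $O(e^{-s})$-neighbourhood, which yields the factor $Ce^{C|z|/s}$. This factor is then absorbed into the Gaussian integral in $z$ at the cost of $e^{O(1)}$. I also need the fine $h$-moments to be uniform, for which I use the stationarity of $M^{(s)}$ after rescaling. Finally, the boundary layer $k$ small (i.e. $k \le k_0$) is trivial, since there the two sets are a macroscopic distance apart and the same computation with $s=O(1)$ applies.
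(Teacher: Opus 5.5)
Your pointwise slab bound and your near-pair estimate $\mb{E}[(M(A)M(A'))^h]\lesssim r^{4dh(1-h)}$ for side-$r$ cubes at distance $\asymp r$ are correct. (The paper gives no proof of this lemma; it imports it from \cite[Lemma 3.2]{Won2019}.) The gap is the sentence dismissing the far pairs, which is false in high dimension.

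The layer term $M(B_1\cap S_k^-)\,M(B_2\cap\{0\le x_1\le r\})$ is a product of two thin slabs of lateral size $O(1)$. Split it into all pairs of side-$r$ cubes and apply subadditivity. A pair at lateral distance $\asymp\rho\in[r,1]$ shares covariance $\log(1/\rho)$, so it has $\mb{E}[(M(A)M(A'))^h]\asymp r^{4dh-2dh^2}\rho^{-2dh^2}$. There are $\asymp r^{-(d-1)}(\rho/r)^{d-1}$ such pairs. The per-pair gain $(r/\rho)^{2dh^2}$ therefore loses to the count $(\rho/r)^{d-1}$ whenever $2dh^2<d-1$. The $\rho\asymp 1$ pairs alone contribute $r^{-2(d-1)}\,r^{2(2dh-dh^2)}=r^{2(1-d(1-h)^2)}$, which does not even tend to $0$ once $d(1-h)^2\ge 1$. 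This happens at $h=\tfrac12$ for every $d\ge 4$. It happens for every admissible $h<\tfrac12+\tfrac1{2\sqrt d}$ once $d\ge 9$, because then $\tfrac12+\tfrac1{2\sqrt d}\le 1-\tfrac1{\sqrt d}$. The same coarse--fine decomposition gives matching lower bounds for these pairs, so sharper pair estimates cannot help; the loss comes from applying subadditivity to $\asymp r^{-2(d-1)}$ terms. For $d\le 8$ you could rescue the scheme by proving the bound only for $h\in(1-\tfrac1{\sqrt d},\tfrac12+\tfrac1{2\sqrt d})$ and then using Jensen. The lemma, however, is used in arbitrary dimension.

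The missing idea is to decompose so that every product term is a near pair. Use a Whitney decomposition of $\{(x,y):x\neq y\}$ into products $A\times A'$ of half-open dyadic cubes of the same side $2^{-j}$ that are non-adjacent but have adjacent parents.
\begin{itemize}
\item Since $M$ is non-atomic, $M(B_1)M(B_2)=\sum_{(A,A')}M(A\cap B_1)\,M(A'\cap B_2)$.
\item Each $A$ has $O(1)$ partners, with $\mathrm{dist}(A,A')\ge 2^{-j}$ and $\mathrm{diam}(A\cup A')\lesssim 2^{-j}$.
\item Because $A$ meets $\{x_1\le 0\}$ and $A'$ meets $\{x_1\ge 0\}$, both cubes lie within $O(2^{-j})$ of the hyperplane. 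Hence only $O(2^{j(d-1)})$ pairs at level $j$ contribute.
\end{itemize}
Your near-pair estimate (decoupling at $s=j\log 2+O(1)$, Lemma \ref{lem:lgf-oscillations} for the coarse oscillation, $h$-moments of the rescaled fine chaos) then gives $\mb{E}[(M(B_1)M(B_2))^h]\lesssim\sum_j 2^{j(d-1)}2^{-4dh(1-h)j}$. This is finite exactly when $h<\tfrac12+\tfrac1{2\sqrt d}$, in every dimension, with no far pairs left to treat.

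Alternatively, you could keep your slabs and handle pairs at separation $\rho$ with cubes of side $\rho$. That route needs a non-subadditive bound for thin-slab masses, for example via Jensen from a higher moment, and is considerably more work than \emph{the same argument}.
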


\subsection{Sobolev space}\label{sec:sobolev}
In this subsection we collect a few facts about Sobolev space.
For any $\mbf{n} \in \mb{N}_0^d$,
we use the standard multi-index notation for mixed partial derivatives
$\partial^{\mbf{n}} := \partial_{1}^{n_1} \cdots \partial_{d}^{n_d}$
and we say $\partial^{\mbf{n}}$ is of order $\|\mbf{n}\|_1 := \sum_{i=1}^d n_i$.
For any bounded open domain $U \subset \mb{R}^d$, $k \in \mb{N}$ and $p \ge 1$,
the Sobolev space $W^{k, p}(U)$ consists of all functions $\varphi: U \to \mb{R}$
such that $\varphi$ and all of its mixed weak derivatives of order $\le k$ are in $L^p(U)$,
and it is a Banach space when equipped with the norm
\begin{align*}
\|\varphi\|_{W^{k,p}(U)} := \left(\sum_{\|\mbf{n}\|_1 \le k} \|\partial^{\mbf{n}} \varphi\|_{L^p(U)}^p\right)^{1/p}.
\end{align*}

\noindent Also, for any $\kappa \in (0, 1)$, 
the $\kappa$-H\"older norm of a function $\varphi:U \to \mb{R}$ is defined by
\begin{align*}
\|\varphi\|_{C^\kappa(U)} := \sup_{x \in U} |\varphi(x)| + [\varphi]_{C^{\kappa}(U)}
\quad \text{with} \quad 
[\varphi]_{C^{\kappa}(U)} := \sup_{\substack{x, y \in U \\ x \ne y}} \frac{|\varphi(x) - \varphi(y)|}{|x-y|^\kappa},
\end{align*}

\noindent and $C^{\kappa}(U) := \{\varphi \in C(U): \|\varphi\|_{C^{\kappa}(U)} < \infty\}$.\\

We now record the following version of Sobolev inequality:
\begin{theo}\label{thm:sobolev-inequality}
Let $Q = (0, 1)^d$ and $p_d := 2(d+1)$.
Then there exists some constant $C \in (0, \infty)$ such that 
for any $x_0 \in \overline{Q}$, we have 
\begin{align*}
[\varphi]_{C^{1/2}(Q)} \le \|\varphi - \varphi(x_0)\|_{C^{1/2}(Q) } \le C \|\nabla \varphi\|_{L^{p_d}(Q)} \qquad \forall \varphi \in W^{1, p_d}(Q).
\end{align*}
\end{theo}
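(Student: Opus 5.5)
The plan is to deduce this from Morrey's inequality on the convex domain $Q$, using that the exponent $p_d = 2(d+1)$ exceeds $d$. The Morrey exponent is
\begin{align*}
\kappa_d := 1 - \frac{d}{p_d} = \frac{d+2}{2(d+1)} \ge \frac12 .
\end{align*}
Since $Q$ has diameter $\sqrt d$, we have $|x-y|^{\kappa_d} \le d^{(\kappa_d - 1/2)/2}\,|x-y|^{1/2}$ for $x,y\in Q$. So the $C^{\kappa_d}$ seminorm controls the $C^{1/2}$ seminorm up to a constant, and it suffices to prove $[\varphi]_{C^{\kappa_d}(Q)} \lesssim \|\nabla\varphi\|_{L^{p_d}(Q)}$.

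By density, I would first take $\varphi \in C^1(\overline Q)$. This class is dense in $W^{1,p_d}(Q)$ because $Q$ is a Lipschitz (indeed convex) domain, and the estimate passes to limits. The limit also identifies the continuous representative of $\varphi$ on $\overline Q$, which gives meaning to $\varphi(x_0)$ for $x_0 \in \partial Q$.

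Fix $x,y \in Q$ and set $r = |x-y|$. Let $S$ be the convex set $Q \cap B(x, 2r)$, or a subcube of $Q$ of side comparable to $r$ containing both points. The key fact, using that $|S| \asymp r^d$ uniformly (a corner cube still occupies a fixed fraction of the ball), is the standard convex-domain potential estimate
\begin{align*}
|\varphi(z) - \varphi_S| \le \frac{(\mathrm{diam}\, S)^d}{d\,|S|}\int_S \frac{|\nabla \varphi(w)|}{|z-w|^{d-1}}\,\dd w, \qquad z\in\{x,y\}.
\end{align*}
This is proved by integrating $\varphi(z)-\varphi(w) = -\int_0^1 \nabla\varphi(z+s(w-z))\cdot(w-z)\,\dd s$ over $w\in S$ and passing to polar coordinates.

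Next I apply Hölder's inequality with conjugate exponent $p' = p_d/(p_d-1)$. Since $(d-1)p' < d$, the kernel $|z-w|^{(1-d)p'}$ is integrable over $B(z,2r)$ with integral $\lesssim r^{d-(d-1)p'}$. This yields $|\varphi(z)-\varphi_S| \lesssim r^{1-d/p_d}\|\nabla\varphi\|_{L^{p_d}(Q)}$. The triangle inequality then gives $[\varphi]_{C^{\kappa_d}(Q)} \le C\|\nabla\varphi\|_{L^{p_d}(Q)}$.

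For the full norm, the first inequality in the statement is trivial: subtracting the constant $\varphi(x_0)$ leaves the seminorm unchanged. For the sup part, continuity up to $\overline Q$ gives $|\varphi(x)-\varphi(x_0)| \le [\varphi]_{C^{1/2}} d^{1/4}$ for all $x\in Q$. Combining with the seminorm bound concludes the proof. The constant is independent of $x_0$ because only the seminorm enters.

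The only delicate step is ensuring the ratio $(\mathrm{diam}\, S)^d/|S|$ stays uniformly bounded for points near corners. Using subcubes of $Q$ of side $\min(1, 2r)$ containing $x,y$ avoids this cleanly.
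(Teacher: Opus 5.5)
Your proof is correct, but it takes a different route from the paper's.

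The paper argues by citation. It applies the inhomogeneous Morrey embedding $\|\widetilde\varphi\|_{C^{1/2}(Q)} \lesssim \|\widetilde\varphi\|_{L^{p_d}(Q)} + \|\nabla\widetilde\varphi\|_{L^{p_d}(Q)}$ from Evans to $\widetilde\varphi := \varphi - \varphi_Q$. Evans's proof goes through an extension operator, which the paper notes in a footnote is available for cubes. The paper then removes the $L^{p_d}$ term with the Poincar\'e--Wirtinger inequality and concludes with $\|\varphi - \varphi(x_0)\|_{C^{1/2}(Q)} \le 2\|\widetilde\varphi\|_{C^{1/2}(Q)}$.

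You instead prove the homogeneous estimate $[\varphi]_{C^{\kappa_d}(Q)} \lesssim \|\nabla\varphi\|_{L^{p_d}(Q)}$ directly, using the convex-domain potential estimate (as in Gilbarg--Trudinger, Lemma 7.16) on subcubes of side $\min(1,2r)$. You then bound the sup part by $\mathrm{diam}(Q)^{1/2}[\varphi]_{C^{1/2}(Q)}$. That last step is what replaces Poincar\'e--Wirtinger: once $\varphi(x_0)$ is subtracted, only the oscillation of $\varphi$ needs to be controlled.

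What each route buys:
\begin{itemize}
\item Yours is self-contained and needs neither an extension theorem nor a separate Poincar\'e inequality.
\item Yours gives the sharper exponent $\kappa_d = \frac{d+2}{2(d+1)} > \frac12$.
\item Yours adapts readily to any bounded convex domain, taking $S = \Omega \cap B(x,2r)$.
\item The paper's is shorter on the page, but relies on textbook statements whose $C^1$-boundary hypotheses have to be checked for the cube.
\end{itemize}

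Two details of yours hold up. Choosing subcubes is the clean way to keep $(\mathrm{diam}\, S)^d/|S| = d^{d/2}$ uniform. The density and continuous-representative step works because the seminorm bound, together with convergence of the averages, gives uniform convergence on $\overline{Q}$.
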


\begin{proof}
Since $\tfrac{1}{2} < 1 - d/p_d$, the Sobolev inequality \cite[Theorem 6 in Section 5.6.3]{Eva2010} says that\footnote{
In the book this is stated for domains with $C^1$ boundary because it relies on an extension theorem in \cite[Section 5.4]{Eva2010},
but one can easily check that the proof of the latter result works for cubes in $\mb{R}^d$.
Alternatively, a general Sobolev inequality for bounded domains having the so-called cone property can be found in e.g. \cite[Section 1.1.11]{Maz2011}.
}
\begin{align}\label{eq:sobolev-inequality}
\|\widetilde{\varphi}\|_{C^{1/2}(Q) } 
\lesssim \|\widetilde{\varphi}\|_{W^{1, p_d}(Q)} 
\lesssim \|\widetilde{\varphi}\|_{L^{p_d}(Q)} + \|\nabla\widetilde{\varphi}\|_{L^{p_d}(Q)}
\qquad \forall \widetilde{\varphi} \in W^{1, p_d}(Q).
\end{align}

\noindent Moreover,
the Poincar\'e-Wirtinger inequality \cite[Theorem 1 in Section 5.8.1]{Eva2010} states that\footnote{
Again the $C^1$ boundary is unnecessary and the result applies to cubes.} 
\begin{align}\label{eq:poincare-wirtinger}
  \|\widetilde{\varphi}\|_{L^{p_d}(Q)} \lesssim \|\nabla \widetilde{\varphi}\|_{L^{p_d}(Q)}
\end{align}
\noindent for any $\widetilde{\varphi} \in W^{1, p_d}(Q)$ satisfying $\int_Q \widetilde{\varphi}(x) dx = 0$.

Now take any $\varphi \in W^{1, p_d}(Q)$, and define $\widetilde{\varphi} := \varphi - \varphi_Q$ 
where $\varphi_Q := \int_Q \varphi(x) dx$ the average value of $\varphi$ on the unit cube $Q$.
We have 
\begin{align*}
\|\varphi - \varphi(x_0)\|_{C^{1/2}(Q)}
&= \sup_{x \in Q}|\varphi(x) - \varphi(x_0)| + [\varphi - \varphi(x_0)]_{C^{1/2}(Q)}\\
&= \sup_{x \in Q}|\widetilde{\varphi}(x) - \widetilde{\varphi}(x_0)| + [\widetilde{\varphi} - \widetilde{\varphi}(x_0)]_{C^{1/2}(Q)}
\le 2\|\widetilde{\varphi} \|_{C^{1/2}(Q)}.
\end{align*}

\noindent Our claim then follows by combining \eqref{eq:sobolev-inequality} with \eqref{eq:poincare-wirtinger}.
\end{proof}

When $p = 2$, the Sobolev space $H^1(U) := W^{1, 2}(U)$ is a Hilbert space.
We will also need a variant of Sobolev space with Dirichlet boundary condition,
and we denote by $H_0^1(U)$ the completion of $C_c^\infty(U)$ with respect to the norm $\|\cdot \|_{H^1(U)}$.
For any compact set $K \subset U$, we define its relative $H_0^1(U)$-capacity (or simply capacity) by
\begin{align}
\mr{Cap}(K; U) := \inf\left\{\frac{1}{2} \int_U |\nabla u(x)|^2 \dd x: u \in C_c^\infty(U), u|_{K} \ge 1\right\}.
\end{align}

The result below is a special case of \cite[Theorem 2.3.3 and Corollary 2.3.4]{Maz2011}.
\begin{theo}\label{thm:equivalence-poincare-capacity}
Let $\mu$ be a finite Borel measure on $U$. The following statements are equivalent.
\begin{itemize}
\item[(1)] There exists some constant $C_1 \in (0, \infty)$ such that 
\begin{align}
\int_U u^2 d\mu \le C_1 \int_U |\nabla u|^2 dx \qquad \forall u \in C_c^\infty(U).
\end{align}

\item[(2)] There exists some constant $C_2\in (0, \infty)$ such that 
\begin{align}
\mu(K) \le C_2 \mr{Cap}(K; U) \qquad \forall \text{ compact $K \subset U$}.
\end{align}
\end{itemize}
\end{theo}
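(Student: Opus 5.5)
The direction (1)$\Rightarrow$(2) is immediate. The converse (2)$\Rightarrow$(1) I would obtain by the standard dyadic level-set (capacitary strong-type) argument of Maz'ya, which reduces the quadratic inequality to the capacity bound applied to the superlevel sets of $u$.

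For (1)$\Rightarrow$(2), fix a compact $K \subset U$ and any admissible $u \in C_c^\infty(U)$ with $u|_K \ge 1$. Since $\mu \ge 0$ and $u^2 \ge \ind_K$,
\begin{align*}
\mu(K) \le \int_U u^2 \, d\mu \le C_1 \int_U |\nabla u|^2 \, dx = 2C_1 \cdot \frac{1}{2}\int_U |\nabla u|^2\, dx .
\end{align*}
Taking the infimum over admissible $u$ gives (2) with $C_2 = 2C_1$.

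For (2)$\Rightarrow$(1), fix $u \in C_c^\infty(U)$. Replacing $u$ by $|u|$ costs nothing, since $|u|$ is Lipschitz with compact support and $|\nabla |u|| = |\nabla u|$ a.e. For $k \in \mb{Z}$ let $K_k := \{|u| \ge 2^k\}$, which is compact in $U$ because $u$ is continuous with compact support. By the layer-cake formula,
\begin{align*}
\int_U u^2\, d\mu = \int_0^\infty 2t\, \mu(|u| > t)\, dt \le \sum_{k \in \mb{Z}} 4^{k+1} \mu(K_k) \le C_2 \sum_{k} 4^{k+1}\, \mr{Cap}(K_k; U).
\end{align*}
To bound $\mr{Cap}(K_k;U)$, use the truncation
\begin{align*}
v_k := 2^{-(k-1)} \min\left\{ \left(|u| - 2^{k-1}\right)_+ ,\, 2^{k-1}\right\}.
\end{align*}
This function is Lipschitz, compactly supported in $U$, and equal to $1$ on $K_k$. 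Its gradient lives on the shell $S_k := \{2^{k-1} < |u| < 2^k\}$, where $|\nabla v_k| \le 2^{-(k-1)} |\nabla u|$. Hence, provided $v_k$ is admissible up to a constant,
\begin{align*}
\mr{Cap}(K_k;U) \lesssim 4^{-(k-1)} \int_{S_k} |\nabla u|^2\, dx .
\end{align*}
Multiplying by $4^{k+1}$ leaves a factor $4^{2}$ independent of $k$. The shells $S_k$ are pairwise disjoint, so summing over $k$ gives $\int_U u^2\, d\mu \le C\, C_2 \int_U |\nabla u|^2\, dx$, which is (1).

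The only technical point is that $v_k$ is Lipschitz rather than smooth, whereas the capacity is defined with $C_c^\infty(U)$ test functions. I would handle this by mollification. Consider $w_\eps := (1+\delta)\, v_k * \phi_\eps$ for a standard mollifier $\phi_\eps$, and enlarge the window slightly, for instance by using the ramp on $[2^{k-1}, (1-\delta')2^k]$. Then for small $\eps$ the function $w_\eps$ is $\ge 1$ on $K_k$ by uniform convergence and continuity of $u$, it has compact support in $U$, and its Dirichlet energy converges to $(1+\delta)^2$ times that of the modified $v_k$. This proves the capacity bound up to a factor tending to $1$ as $\delta, \delta' \to 0$. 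It is routine but is the step requiring care; everything else is the bookkeeping above.
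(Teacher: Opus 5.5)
Your proof is correct and follows essentially the same route as the result the paper relies on. The paper gives no argument of its own and simply cites Maz'ya (Theorem 2.3.3 and Corollary 2.3.4), whose proof is the same layer-cake reduction to the capacitary strong-type inequality for the level sets $\{|u| \ge t\}$. Your dyadic truncations give a worse constant than Maz'ya's continuous-parameter version, but only the existence of $C_1, C_2$ matters here. Your mollification step is sound, and you could avoid it altogether by composing $|u|$ with a smooth ramp that vanishes on $[0,2^{k-1}]$, since that composition is already in $C_c^\infty(U)$.
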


In order to apply Theorem \ref{thm:equivalence-poincare-capacity},
we need the following capacity estimate.
\begin{lem}\label{lem:key-capacity-estimate}
Let $D_1 \subset D_2 \subset D_3$ be non-empty bounded open subsets of $\mb{R}^2$ satisfying $\overline{D_1} \subset D_2$.
There exist some constant $C \in (0, \infty)$ and $r_* > 0$ such that the following holds:
for any open ball $B$ with radius $r \le r_*$ such that $\overline{B} \subset D_1$,
there exists $\eta_B \in C_c^\infty(D_2)$ such that
\begin{align}\label{eq:eta-desired-support}
0 \le \eta_B \le 1, \qquad \eta_B |_{\overline{B}} \equiv 1
\end{align}

\noindent and 
\begin{align}\label{eq:key-capacity-estimate}
\frac{1}{2} \int_{D_2} |\nabla \eta_B|^2 dx \le \frac{C}{\log \left(1+|B|^{-1}\right)}
\end{align}

\noindent where $|B|$ is the Lebesgue measure of $B$. In particular,
\begin{align}
\mr{Cap}(\overline{B}; D_3) \le \mr{Cap}(\overline{B}; D_2)  \le \frac{C}{\log\left(1 + |B|^{-1}\right)}.
\end{align}

\end{lem}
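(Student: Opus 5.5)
We take $\eta_B$ to be the classical logarithmic cutoff, smoothed, since in two dimensions the capacity of a small disc relative to a fixed larger region decays like the inverse logarithm of its radius. Set $\delta := \mathrm{dist}(\overline{D_1}, \partial D_2) > 0$; this is positive because $\overline{D_1}$ is compact and contained in the open set $D_2$. Let $B = B(x_0, r)$ with $\overline{B} \subset D_1$, and fix $R := \delta/2$. Choose $r_* \le R/4$ small, so that $r \le r_*$ forces $r^2 < R$ (after shrinking $r_*$ if needed). We then work with the annulus $r \le |x - x_0| \le R_0$ for a suitable $R_0 \le R$; its closure lies in $D_2$ because $|x_0 - y| \ge \delta$ for every $y \notin D_2$.

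Define the Lipschitz function
\begin{align*}
\psi(x) := \min\left(1, \max\left(0, \frac{\log(R/|x-x_0|)}{\log(R/r)}\right)\right).
\end{align*}
It equals $1$ on $\overline{B}$, vanishes outside $B(x_0,R)$, and satisfies $0 \le \psi \le 1$. On the annulus $r<|x-x_0|<R$ we have $|\nabla \psi| = (|x-x_0|\log(R/r))^{-1}$. Integrating in polar coordinates gives
\begin{align*}
\frac12\int |\nabla\psi|^2 \, dx = \frac{\pi}{\log(R/r)}.
\end{align*}

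To obtain a smooth function that is still identically $1$ on the closed ball $\overline{B}$, we first replace $r$ by $2r$ in the formula, so that $\psi \equiv 1$ on $B(x_0,2r)$. We then mollify at scale $r/2$: convolution with a standard mollifier $\phi_{r/2}$ preserves $0\le\cdot\le1$, gives a function equal to $1$ on $B(x_0, 3r/2) \supset \overline{B}$, keeps the support inside $B(x_0, R + r/2) \subset\subset D_2$, and does not increase the Dirichlet energy, by Jensen/Young. Hence
\begin{align*}
\frac12\int|\nabla\eta_B|^2 \le \frac{\pi}{\log(R/(2r))}.
\end{align*}

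It remains to compare this with $\log(1+|B|^{-1})$, where $|B| = \pi r^2$. For $r\le r_*$ small we have $\log(1+|B|^{-1}) \le \log(2/(\pi r^2)) \le 3\log(1/r)$, while $\log(R/(2r)) \ge \tfrac12 \log(1/r)$ once $r \le r_*$ with $r_*$ depending on $R$. This yields \eqref{eq:key-capacity-estimate} with a constant $C$ depending only on $\delta$.

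The capacity bounds follow directly. The function $\eta_B \in C_c^\infty(D_2)$ is admissible in the definition of $\mathrm{Cap}(\overline B; D_2)$. Since $C_c^\infty(D_2)\subset C_c^\infty(D_3)$, the infimum defining $\mathrm{Cap}(\overline B;D_3)$ runs over a larger class, which gives the monotonicity $\mathrm{Cap}(\overline B; D_3)\le \mathrm{Cap}(\overline B; D_2)$.

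The only mildly delicate step is the mollification. One must verify that the energy does not increase, which holds because $\nabla(\psi*\phi)=(\nabla\psi)*\phi$ and the $L^2$ norm is preserved or reduced under convolution with a probability kernel. One must also track the supports so that $\eta_B\equiv1$ on the closed ball and $\mathrm{supp}\,\eta_B$ stays compactly inside $D_2$. Both are ensured by the factor-$2$ enlargement of the inner radius and by $R=\delta/2$ leaving room near $\partial D_2$.
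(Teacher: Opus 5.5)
Your proposal is correct and follows essentially the same route as the paper: a logarithmic cutoff equal to $1$ on $B(x_0,2r)$ and vanishing outside $B(x_0,R)$ with $R=\delta/2$, mollification at a scale below $r$ (using Jensen/Young to show the Dirichlet energy does not increase), and finally a comparison of $\log(R/(2r))$ with $\log(1+|B|^{-1})$ for $r\le r_*$. The side remarks about forcing $r^2<R$ and the auxiliary radius $R_0$ are never used and can simply be dropped.
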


\begin{proof}
This result is standard but we provide a proof here for completeness.

Let $\delta := d(D_1, D_2^c)$, $R := \delta / 2$ and $r_* := \delta / 100$.
Suppose $B = B(x_0, r)$ is an open ball centred at $x_0$ with radius $r \le r_*$ and $\overline{B(x_0, r)} \subset D_1$.
Define the function 
\begin{align*}
f(x) := \begin{dcases}
1 & |x-x_0| \le 2r, \\
\frac{\log(R / |x-x_0|)}{\log(R/(2r))} & |x-x_0| \in [2r, R], \\
0 & |x-x_0| \ge R.
\end{dcases}
\end{align*}

\noindent Then it is straightforward to check that 
\begin{align}\label{eq:capacity-concentric}
\frac{1}{2} \int_{D_2} |\nabla f|^2 dx
=  \frac{1}{2}\int_{2r \le |x-x_0|\le R} \frac{\dd x}{\left[|x - x_0| \log(R / (2r))\right]^2}
= \frac{\pi}{\log^2(R/(2r))} \int_{2r}^R \frac{du}{u} = \frac{\pi}{\log(R/(2r))}.
\end{align}

Now let $\varphi \in C_c^\infty(\mb{R}^2)$ be a non-negative mollifier with compact support in $B(0, r)$ and set $\eta_B := f \ast \varphi$.
One can verify that $\eta_B$ satisfies the desired conditions \eqref{eq:eta-desired-support}.
Moreover
\begin{align*}
\frac{1}{2} \int_{D_2} |\nabla \eta_B|^2 \dd x 
= \frac{1}{2} \int_{D_2} \left| \nabla \left(f \ast \varphi\right)(x)\right|^2 \dd x
&= \frac{1}{2} \int_{D_2} \left|\int_{\mb{R}^2} \left(\nabla f(x-y)\right) \varphi(y)\dd y\right|^2 \dd x\\
& \le \int_{\mb{R}^2}\left[\frac{1}{2} \int_{D_2} \left| \left(\nabla f(x-y)\right) \right|^2 \dd x \right] \varphi(y)\dd y
= \frac{\pi}{\log(R/(2r))}
\end{align*}

\noindent where the second last step follows from Jensen's inequality and the last step from \eqref{eq:capacity-concentric}.
Since $R$ is fixed, 
there exists some constant $C \in (0, \infty)$ independent of $B$ 
such that the previous inequality can be rewritten in the form of \eqref{eq:key-capacity-estimate}.
This concludes our proof.
\end{proof}

\subsection{Miscellaneous lemmas}
We collect a few more probability results.
The first one is a useful moment estimate.
\begin{lem}\label{lem:high-moment-estimate}
Let $\beta \in (1, 2]$,
and $Y_j$ be a finite collection of independent centred complex-valued random variables with deterministic bounds $|Y_j| \le m_j$.
There exists some constant $C = C(\beta) \in (0, \infty)$ such that
\begin{align*}
\forall r \ge 2, 
\qquad  \mb{E}\left[\bigg|\sum_j Y_j\bigg|^r\right]^{1/r}
\le C r^{1-1/\beta} \left(\sum_{j} m_j^\beta\right)^{1/\beta}.
\end{align*}
\end{lem}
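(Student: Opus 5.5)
The plan is to combine a Hoeffding-type exponential moment bound with a Rosenthal-type estimate, interpolated through the $\ell^\beta$ norm of the bounds $m_j$. Write $S := \sum_j Y_j$. Since $|S| \le |\mathrm{Re}\, S| + |\mathrm{Im}\, S|$, and each of $\mathrm{Re}\, Y_j$, $\mathrm{Im}\, Y_j$ is centred, independent across $j$ and bounded by $m_j$, it suffices to treat real-valued $Y_j$ at the cost of a factor $2$.

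The proof splits the index set according to the size of $m_j$. By homogeneity we normalise $\sum_j m_j^\beta = 1$, so that every $m_j \le 1$; the target then reads $\|S\|_r \le C r^{1-1/\beta}$. Fix a threshold $\tau \in (0,1]$, to be chosen depending on $r$, and split $S = S_{\mathrm{small}} + S_{\mathrm{large}}$ according to whether $m_j \le \tau$ or $m_j > \tau$.

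For the large terms we use the crude triangle inequality. Since $\#\{j : m_j > \tau\} \le \tau^{-\beta}$, Hölder's inequality gives
\begin{align*}
\|S_{\mathrm{large}}\|_r \le \sum_{m_j > \tau} m_j \le \left(\sum_{m_j>\tau} m_j^\beta\right)^{1/\beta} \left(\#\{j : m_j>\tau\}\right)^{1-1/\beta} \le \tau^{-(\beta-1)}.
\end{align*}

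For the small terms we use the Hoeffding (sub-Gaussian) bound $\|S_{\mathrm{small}}\|_r \lesssim \sqrt{r}\,\big(\sum_{m_j\le\tau} m_j^2\big)^{1/2}$. Because $\beta \le 2$ and $m_j \le \tau$,
\begin{align*}
\sum_{m_j\le \tau} m_j^2 \le \tau^{2-\beta}\sum_j m_j^\beta = \tau^{2-\beta},
\end{align*}
which yields $\|S_{\mathrm{small}}\|_r \lesssim \sqrt{r}\,\tau^{1-\beta/2}$.

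It remains to balance the two contributions. Setting $\tau^{-(\beta-1)} = \sqrt{r}\,\tau^{1-\beta/2}$ gives $\tau^{-\beta/2} = \sqrt{r}$, that is, $\tau = r^{-1/\beta} \le 1$. Substituting back, both terms become $r^{(\beta-1)/\beta} = r^{1-1/\beta}$, which is exactly the claimed bound. The constant is uniform in $\beta \in (1,2]$ once $\beta$ is fixed.

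The only delicate point is the Hoeffding moment bound. I would derive it from Hoeffding's lemma, $\mb{E}\left[e^{\lambda S}\right] \le e^{\lambda^2 \sum m_j^2/2}$, which implies the sub-Gaussian tail estimate; integrating that tail then gives $\|S\|_r \le C\sqrt{r}\,\sigma$ with $\sigma^2 = \sum m_j^2$. This step is routine. If anything needs care, it is checking that the truncation correctly handles the case $\tau = r^{-1/\beta}$ exceeding some $m_j$ or all of them, in which case $S_{\mathrm{large}}$ is simply empty and the bound holds trivially.
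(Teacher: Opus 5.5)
Your proposal is correct and follows the paper's proof essentially step for step. The paper also reduces to real parts and splits at the same threshold $(V/r)^{1/\beta}$ with $V=\sum_j m_j^\beta$, which is your $\tau=r^{-1/\beta}$ after normalising $V=1$. It likewise bounds the large part crudely and the small part by Hoeffding with $\sum m_j^2\ind_{\{m_j\le\tau\}}\le \tau^{2-\beta}V$. The only cosmetic difference is on the large part: you use Hölder together with the count $\#\{j: m_j>\tau\}\le\tau^{-\beta}$, whereas the paper uses the pointwise bound $m_j\le \tau^{1-\beta}m_j^\beta$ for $m_j>\tau$.
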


\begin{proof}
By separately considering the real and imaginary parts of $Y_j$,
we may assume without loss of generality that our random variables are real-valued.

Writing $V := \sum_{j} m_j^\beta$, we consider
$\mb{E}\left[\big|\sum_j Y_j \big|^r\right]^{1/r} \le \mb{E}\left[|S_+|^r\right]^{1/r} + \mb{E}\left[|S_-|^r\right]^{1/r}$ where 
\begin{align*}
S_+ := \sum_j Y_j \ind_{\{m_j > (V/r)^{1/\beta} \}}
\qquad  \text{and} \qquad S_- := \sum_j Y_j \ind_{\{m_j \le  (V/r)^{1/\beta} \}}.
\end{align*}

The first term can be easily bounded by
\begin{align*}
\mb{E}\left[|S_+|^r\right]^{1/r}
\le \sum_j m_j \ind_{\{m_j > (V/r)^{1/\beta} \}} 
\le \left( (V/r)^{1/\beta}\right)^{1-\beta} \sum_j m_j^\beta
\le r^{1-1/\beta} V^{1/\beta}.
\end{align*}

As for the second term, we recall from Hoeffding's inequality (see e.g. \cite[Theorem 2.8]{BLG2013}) that
\begin{align*}
\mb{P}\left(|S_-| > t\right) 
\le 2 \exp\left(-\frac{t^2}{2 \sum_{j} m_j^2\ind_{\{m_j \le  (V/r)^{1/\beta}\}}  }\right) \qquad \forall t \ge 0.
\end{align*}

\noindent Using the identity $\mb{E}\left[|S_-|^r\right] = r \int_0^\infty t^{r-1}\mb{P}(|S_-| > t) dt$, we have 
\begin{align*}
\mb{E}\left[|S_-|^r\right]^{1/r}
& \le \left[2r \int_0^{\infty} t^{r-1} \exp\left(-\frac{t^2}{2 \sum_{j} m_j^2 \ind_{\{m_j \le (V/r)^{1/\beta} \}}}\right)dt\right]^{1/r}\\
&= \left[2\Gamma\left(1 + \frac{r}{2}\right)\right]^{1/r} \left(2 \sum_{j} m_j^2 \ind_{\{m_j \le  (V/r)^{1/\beta} \}}\right)^{1/2}.
\end{align*}

\noindent Since $\Gamma(1 + x) \sim \sqrt{2\pi x} (x/e)^x$ as $x \to \infty$ by Stirling's approximation,
we see that $\left[2\Gamma\left(1 + \frac{r}{2}\right)\right]^{1/r} \lesssim \sqrt{r}$.
In particular,
\begin{align*}
\mb{E}\left[|S_-|^r\right]^{1/r}
\lesssim \left(r\sum_{j} m_j^2 \ind_{\{m_j \le  (V/r)^{1/\beta} \}}\right)^{1/2}
\le \left[r \left((V/r)^{1/\beta}\right)^{2-\beta}\sum_{j} m_j^\beta\right]^{1/2}
= r^{1 - 1/\beta} V^{1/\beta}
\end{align*}

\noindent and our proof is complete.
\end{proof}

In our study of the Fourier decay,
we will use the following corollary of Lemma \ref{lem:high-moment-estimate}.
\begin{cor}\label{cor:high-moment-estimate}
Let $\Lambda \subset \mb{Z}^d$ be a non-empty finite set.
Suppose $U_j $ is a sequence of independent random vectors in $\mb{R}^d$ and $\xi_j(\mbf{n}) := e^{2\pi i \mbf{n} \cdot U_j} - \mb{E}[e^{2\pi i \mbf{n} \cdot U_j}]$.
Then for each $p \in (0, 1)$ and $\beta \in (1, 2]$,
there exists some constant $C = C(p, \beta) \in (0, \infty)$ such that 
\begin{align*}
\mb{E}\left[ \max_{\mbf{n} \in \Lambda} \left| \sum_j \xi_j(\mbf{n}) \mc{A}_j(\mbf{n}) \right|^p\right]
\le C \ell(e|\Lambda|)^{p(1-1/\beta)} \left(\sum_{j} m_j^{\beta}\right)^{p/\beta}
\end{align*}

\noindent uniformly for all deterministic $\mc{A}_j: \Lambda \to \mb{C}$ satisfying $\max_{\mbf{n}} |\mc{A}_j(\mbf{n})| \le m_j$ for each $j$.
\end{cor}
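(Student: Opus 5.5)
We reduce to a single-$\mbf{n}$ moment bound and then pay for the maximum over $\Lambda$ through the $r^{1-1/\beta}$ growth in Lemma \ref{lem:high-moment-estimate}. For each fixed $\mbf{n} \in \Lambda$, the summands $Y_j := \xi_j(\mbf{n})\mc{A}_j(\mbf{n})$ have the following properties:
\begin{itemize}
\item they are independent, since $\mc{A}_j$ is deterministic and the $U_j$ are independent;
\item they are centred by the definition of $\xi_j(\mbf{n})$;
\item they satisfy $|Y_j| \le 2m_j$, since $|\xi_j(\mbf{n})| \le 2$.
\end{itemize}
Lemma \ref{lem:high-moment-estimate} therefore gives, for every $r \ge 2$,
\begin{align*}
\mb{E}\left[\bigg|\sum_j \xi_j(\mbf{n})\mc{A}_j(\mbf{n})\bigg|^r\right]^{1/r} \le 2C r^{1-1/\beta} \left(\sum_j m_j^\beta\right)^{1/\beta} =: 2C r^{1-1/\beta} V^{1/\beta},
\end{align*}
and this bound is uniform in $\mbf{n}$ and in the admissible $\mc{A}_j$.

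Next we control the maximum by replacing it with a sum of $r$-th powers. Writing $S(\mbf{n})$ for the inner sum and choosing $r \ge 2$ with $r \ge p$, Jensen's inequality (the map $x \mapsto x^{p/r}$ is concave) gives
\begin{align*}
\mb{E}\left[\max_{\mbf{n} \in \Lambda}|S(\mbf{n})|^p\right]
\le \mb{E}\left[\sum_{\mbf{n} \in \Lambda}|S(\mbf{n})|^r\right]^{p/r}
\le |\Lambda|^{p/r} \left(2C r^{1-1/\beta} V^{1/\beta}\right)^p.
\end{align*}

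Finally we optimise in $r$ by taking $r := \max\big(2, \ell(e|\Lambda|)\big)$, so that $|\Lambda|^{p/r} \le e^{p}$ is bounded. Since $\ell(e|\Lambda|) \ge 1$, we have $r \le 2\ell(e|\Lambda|)$, hence $r^{p(1-1/\beta)} \lesssim \ell(e|\Lambda|)^{p(1-1/\beta)}$. This yields the claim with a constant depending only on $p$ and $\beta$.

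There is no real obstacle. The only points needing care are that the choice of $r$ keeps $|\Lambda|^{p/r}$ bounded while $r$ grows only logarithmically, and that Lemma \ref{lem:high-moment-estimate} holds for complex-valued summands, which it does by splitting into real and imaginary parts.
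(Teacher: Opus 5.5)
Your proof is correct and follows the paper's argument essentially verbatim: bound the maximum by an $\ell^r$-sum over $\Lambda$, use Jensen (concavity of $x \mapsto x^{p/r}$) to pass to $\sum_{\mbf{n} \in \Lambda}\mb{E}\bigl[|\cdot|^r\bigr]$, apply Lemma \ref{lem:high-moment-estimate} with its $r$-independent constant, and take $r$ of order $\ell(e|\Lambda|)$ so that $|\Lambda|^{p/r}$ stays bounded. The differences are cosmetic: you track the factor $2$ coming from $|\xi_j(\mbf{n})| \le 2$, which the paper silently absorbs into the constant, and you choose $r = \max(2, \ell(e|\Lambda|))$ where the paper takes $r = 2\ell(e|\Lambda|)$.
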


\begin{proof}
Let $r \ge 2$. Since $p, 1/\beta \in (0, 1)$, we see that
\begin{align*}
\mb{E}&\left[ \max_{\mbf{n} \in \Lambda} \left| \sum_j \xi_j(\mbf{n}) \mc{A}_j(\mbf{n}) \right|^p\right]
\le \mb{E}\left[ \left(\sum_{\mbf{n} \in \Lambda} \left| \sum_j \xi_j(\mbf{n}) \mc{A}_j(\mbf{n}) \right|^r\right)^{p/r}\right]\\
&\qquad \le \left( \sum_{\mbf{n} \in \Lambda} \mb{E}\left[ \left| \sum_j \xi_j(\mbf{n}) \mc{A}_j(\mbf{n}) \right|^r\right]\right)^{p/r}
\le C |\Lambda|^{p/r} r^{p(1-1/\beta)} \left(\sum_j m_j^{\beta}\right)^{p/\beta}
\end{align*}

\noindent where the second step follows from H\"older's inequality and the last step is obtained from Lemma \ref{lem:high-moment-estimate}.
Since the constant $C$ in the last bound is independent of $r$,
we can choose $r := 2  \ell(e|\Lambda|)$ so that $|\Lambda|^{1/r} \le 2$ and the moment estimate follows immediately.
\end{proof}

The next result translates the asymptotics of tail probabilities to those of Laplace transforms.
\begin{lem}\label{lem:laplace-estimate}
Let $C \in (0, \infty)$ and $Y$ be a non-negative random variable with the tail asymptotics $\mb{P}(Y > y) \sim C / y$ as $y \to \infty$.
As $q \to 0^+$, we have 
\begin{align*}
\mb{E}\left[1 - e^{- q Y^\beta}\right]
 \sim C \Gamma(1 - \alpha) q^\alpha
\quad \text{and} \quad \mb{E}\left[\min(1, qY^\beta)\right] \sim \frac{C q^\alpha}{1-\alpha}
\end{align*}

\noindent for any fixed $\beta > 1$ and $\alpha := 1/\beta$.\\

If instead $Y$ only satisfies the one-sided bound 
\begin{align*}
  \mb{P}(Y > y) \le \frac{C}{y} \qquad \left(\text{resp.} \quad \mb{P}(Y > y) \ge \frac{C}{\max(1, y)}\right)
\end{align*}
\noindent for all $y > 0$, then
\begin{align*}
\textstyle
\mb{E}\left[1 - e^{- q Y^\beta}\right]\le C \Gamma(1 - \alpha) q^\alpha
\quad &\text{and}\quad \mb{E}\left[\min(1, qY^\beta)\right] \le \frac{C q^\alpha}{1-\alpha}  \quad \forall q \ge 0\\
\bigg(\text{resp.}~\mb{E}\left[1 - e^{- q Y^\beta}\right] \ge \frac{C(1 - 2^{-(1-\alpha)})}{e(1-\alpha)} q^\alpha
\quad&\text{and}\quad\mb{E}\left[\min(1, qY^\beta)\right] \ge \frac{C(1 - 2^{-(1-\alpha)})}{(1-\alpha)} q^\alpha 
\quad \forall q \in [0, \tfrac{1}{2}].\bigg)
\end{align*}
\end{lem}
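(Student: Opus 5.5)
The plan is to write both expectations as integrals of the tail function $y \mapsto \mb{P}(Y > y)$ and then rescale the integration variable so that the $q^\alpha$ factor comes out explicitly. Let $g$ be a non-decreasing, absolutely continuous function with $g(0) = 0$. Then Fubini's theorem gives
\begin{align*}
\mb{E}[g(Y)] = \int_0^\infty g'(y)\, \mb{P}(Y > y)\, \dd y.
\end{align*}
I apply this to $g(y) = 1 - e^{-qy^\beta}$ and to $g(y) = \min(1, qy^\beta)$. I then substitute $y = q^{-\alpha} s$ and use $q \cdot q^{-\alpha(\beta - 1)} = q^{\alpha}$, which holds because $\alpha\beta = 1$. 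This gives
\begin{align*}
\mb{E}\left[1 - e^{-qY^\beta}\right] &= \int_0^\infty \beta s^{\beta-1} e^{-s^\beta}\, \mb{P}(Y > q^{-\alpha}s)\, \dd s,\\
\mb{E}\left[\min(1, qY^\beta)\right] &= \int_0^1 \beta s^{\beta-1}\, \mb{P}(Y > q^{-\alpha}s)\, \dd s.
\end{align*}

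For the asymptotic statement, I multiply both identities by $q^{-\alpha}$. For each fixed $s > 0$, the tail assumption gives $q^{-\alpha}\mb{P}(Y > q^{-\alpha}s) \to C/s$ as $q \to 0^+$. To pass to the limit I need a dominating function. Since $\mb{P}(Y>y) \sim C/y$, there is a constant $K$ with $\sup_{y > 0} y\,\mb{P}(Y > y) \le K$: this holds for large $y$ by the asymptotic, and for bounded $y$ trivially since $\mb{P}(Y>y) \le 1$. Hence the integrands are dominated by $K\beta s^{\beta-2}e^{-s^\beta}$ and $K\beta s^{\beta-2}\ind_{s \le 1}$ respectively. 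Both are integrable near $s = 0$ precisely because $\beta > 1$. Dominated convergence then gives the limits
\begin{align*}
C\int_0^\infty \beta s^{\beta-2}e^{-s^\beta}\,\dd s = C\int_0^\infty u^{-\alpha}e^{-u}\,\dd u = C\,\Gamma(1-\alpha)
\qquad\text{and}\qquad
C\int_0^1 \beta s^{\beta-2}\,\dd s = \frac{C\beta}{\beta-1} = \frac{C}{1-\alpha}.
\end{align*}
The first computation uses the substitution $u = s^\beta$. The only point needing care is this domination step near $s = 0$, and it is handled by the uniform bound $y\,\mb{P}(Y>y) \le K$.

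For the one-sided upper bound, I plug $\mb{P}(Y > q^{-\alpha}s) \le Cq^{\alpha}/s$ directly into the two integral representations. The same computations as above then give exactly $C\Gamma(1-\alpha)q^\alpha$ and $Cq^\alpha/(1-\alpha)$, for every $q \ge 0$.

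For the one-sided lower bound I restrict to $q \in [0, \tfrac12]$ and keep only the range $s^\beta = u \in [\tfrac12, 1]$. On this range $s = u^\alpha \ge 2^{-\alpha} \ge q^\alpha$, so $q^{-\alpha}s \ge 1$. The hypothesis $\mb{P}(Y>y) \ge C/\max(1,y)$ therefore gives $\mb{P}(Y > q^{-\alpha}s) \ge Cq^\alpha/s$ there. After substituting $u = s^\beta$ and using $e^{-u} \ge e^{-1}$, this yields
\begin{align*}
\mb{E}\left[1 - e^{-qY^\beta}\right] \ge \frac{Cq^\alpha}{e}\int_{1/2}^1 u^{-\alpha}\,\dd u = \frac{C(1 - 2^{-(1-\alpha)})}{e(1-\alpha)}\,q^\alpha.
\end{align*}
The bound for $\min(1, qY^\beta)$ follows from the same restriction without the factor $e^{-1}$, since in that integral the weight is $\beta s^{\beta-1}$ with no exponential.
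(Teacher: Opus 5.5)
Your proof is correct and follows essentially the same route as the paper. The paper likewise uses Fubini to write the two expectations as $\int_0^\infty e^{-u}\,\mb{P}(Y \ge (u/q)^\alpha)\,\dd u$ and $\int_0^1 \mb{P}(Y > (u/q)^\alpha)\,\dd u$ (your variable is just $s = u^\alpha$) and then inserts the tail asymptotic or the one-sided tail bounds; you additionally spell out what the paper leaves implicit, namely the domination via $\sup_{y>0} y\,\mb{P}(Y>y) < \infty$ (integrable near $0$ precisely because $\beta > 1$) and the derivation of the one-sided bounds with the window $u \in [\tfrac12, 1]$ that yields the stated constants.
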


\begin{proof}
Applying Fubini's theorem, we have 
\begin{align*}
\mb{E}\left[1 - \exp\left(- q Y^\beta\right)\right]
= \int_0^{\infty} e^{-s} ds\mb{E}\left[\ind_{s \le qY^{\beta}}\right]
&= \int_0^{\infty} e^{-s} \mb{P}(Y \ge (s/q)^\alpha)ds\\
& \overset{q \to 0^+}{\sim} \int_0^{\infty} e^{-s} \frac{C}{(s/q)^{\alpha}}ds 
= C \Gamma(1-\alpha) q^{\alpha}
\end{align*}

\noindent which is our first asymptotic formula.
As for the second estimate, we have 
\begin{align*}
\mb{E}\left[\min(1, qY^\beta)\right] 
= \int_0^{1} \mb{P}(qY^{\beta} > u) du
&= \int_0^{1} \mb{P}\left(Y > (u/q)^\alpha \right) du\\
&\overset{q \to 0^+}{\sim}  \int_0^{1} \frac{C}{(u/q)^\alpha} du
=\frac{Cq^{\alpha}}{1-\alpha}.
\end{align*}

The one-sided bounds follow from similar derivations which we omit here.
\end{proof}

Now let us record a moment bound for the $L^p$-norm of a centred Gaussian field based on its covariance.
\begin{lem}\label{lem:lp-bound-gaussian}
Let $Z(\cdot)$ be a centred Gaussian field on a bounded open domain $D \subset \mb{R}^d$
with a covariance kernel $\mb{E}[Z(x) Z(y)] =: K_Z(x - y)$ that is invariant under translation and rotation.
If $K_Z \in C_c^2(\mb{R}^d)$, 
then for any $p, m \ge 1$ we have 
\begin{align*}
\mb{E} \left[\|\nabla Z\|_{L^{p}(D)}^m\right]^{1/m}
\le C d|D|^{1/p}  \sqrt{\max(p, m)} \sqrt{-\partial^2_1 K_Z(0)}
\end{align*}

\noindent where $C \in (0, \infty)$ is some absolute constant.
\end{lem}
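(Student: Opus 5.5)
We reduce the bound to a single directional derivative and then to a single Gaussian variable, whose moments are known explicitly. The key inputs are the elementary pointwise bound $|\nabla Z(x)| \le \sum_{i=1}^d |\partial_i Z(x)|$ and the observation that each $\partial_i Z(x)$ is a centred Gaussian variable with variance $-\partial_i^2 K_Z(0)$. Since $K_Z\in C_c^2$, the field $Z$ has a mean-square derivative, and we may work with a version having measurable derivatives. We have $\mb{E}[\partial_i Z(x)\partial_i Z(y)] = -\partial_i^2 K_Z(x-y)$. By rotation invariance, $-\partial_i^2K_Z(0) = -\partial_1^2K_Z(0)$ for every $i$.

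First we pass to directional derivatives. By Minkowski's inequality in $L^p(D)$ and then in $L^m(\mb{P})$, we get
\begin{align*}
\mb{E}\left[\|\nabla Z\|_{L^p(D)}^m\right]^{1/m} \le \sum_{i=1}^d \mb{E}\left[\|\partial_i Z\|_{L^p(D)}^m\right]^{1/m}.
\end{align*}
This accounts for the factor $d$, so it suffices to bound a single term uniformly in $i$.

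Next we fix $i$ and set $q := \max(p,m)$. Using Jensen's (or H\"older's) inequality on the probability space, we raise the $m$-th moment to a $q$-th moment:
\begin{align*}
\mb{E}\left[\|\partial_i Z\|_{L^p(D)}^m\right]^{1/m} \le \mb{E}\left[\|\partial_i Z\|_{L^p(D)}^q\right]^{1/q}.
\end{align*}
Since $q\ge p$, Jensen's inequality for the normalised Lebesgue measure on $D$ gives $\|\partial_i Z\|_{L^p(D)}^q \le |D|^{q/p-1}\int_D |\partial_i Z(x)|^q\,\dd x$. Taking expectations and applying Fubini's theorem, we obtain
\begin{align*}
\mb{E}\left[\|\partial_i Z\|_{L^p(D)}^q\right] \le |D|^{q/p-1}\int_D \mb{E}|\partial_i Z(x)|^q\,\dd x = |D|^{q/p}\,\sigma^q\,\mb{E}|N|^q,
\end{align*}
where $\sigma^2 = -\partial_1^2 K_Z(0)$ and $N$ is a standard normal variable. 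Taking $q$-th roots leaves $|D|^{1/p}\,\sigma\,\mb{E}[|N|^q]^{1/q}$.

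Finally, Stirling's formula applied to $\mb{E}|N|^q = 2^{q/2}\Gamma((q+1)/2)/\sqrt{\pi}$ gives $\mb{E}[|N|^q]^{1/q} \le C\sqrt{q}$ for all $q\ge 1$, with $C$ absolute. Summing over the $d$ coordinates yields the claimed bound. The only delicate point is the regularity justification: we need a version of $Z$ that is differentiable, with $\partial_iZ$ Gaussian and jointly measurable. This follows from $K_Z\in C^2$, either via mean-square differentiability or by interpreting $\nabla Z$ as the $L^2$-limit of difference quotients. Everything else is routine.
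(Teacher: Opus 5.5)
Your proof is correct and follows essentially the paper's route: reduce to single coordinate derivatives, interchange expectation and the spatial integral, and use $\mb{E}[|N|^q]^{1/q}\lesssim\sqrt{q}$. The only difference is cosmetic. The paper splits into the cases $m\le p$ (H\"older) and $m\ge p$ (Minkowski's integral inequality), whereas you unify them by first passing to the $q=\max(p,m)$-th moment and then applying Jensen in space. You also invoke rotation invariance only through the pointwise variances $-\partial_i^2K_Z(0)$, which is slightly cleaner than the paper's direct reduction to $\partial_1 Z$.
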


\begin{proof}
By the rotational invariance of the covariance kernel $K_Z$ (and hence the field $Z$),
we have
\begin{align}\label{eq:lpbound-gaussian}
\mb{E} \left[\|\nabla Z\|_{L^{p}(D)}^m\right]^{1/m}
\le d \mb{E} \left[\left( \int_D |\partial_1 Z(x)|^p dx\right)^{m/p}\right]^{1/m}.
\end{align}

\noindent If $m \le p$, we may apply H\"older's inequality which shows that \eqref{eq:lpbound-gaussian} is bounded by
\begin{align*}
d \left( \int_D \mb{E} \left[ |\partial_1 Z(x)|^p \right] dx\right)^{\tfrac{m}{p} \cdot \tfrac{1}{m}}
= d |D|^{\frac{1}{p}} \sqrt{\mr{Var}\left(\partial_1 Z(0)\right)} \mb{E}[|G|^p]^{1/p}
\end{align*}

\noindent where $G$ is a standard Gaussian random variable.
As for the complementary case $m \ge p$, one can apply Minkowski's inequality which upper bounds \eqref{eq:lpbound-gaussian} by
\begin{align*}
d \left( \int_D \mb{E} \left[ |\partial_1 Z(x)|^{p \cdot \tfrac{m}{p}} \right]^{\tfrac{p}{m}} dx\right)^{\tfrac{1}{p}}
= d |D|^{\frac{1}{p}} \sqrt{\mr{Var}\left(\partial_1 Z(0)\right)} \mb{E}[|G|^m]^{1/m}.
\end{align*}

\noindent The proof is now complete by noting that $\mb{E}[|G|^p]^{1/p} \lesssim \sqrt{p}$ and $\mr{Var}\left(\partial_1 Z(0)\right) = -\partial_1^2 K_Z(0)$.
\end{proof}

Finally, we record a classical result that enables one to perform Poisson approximation of sum of random variables with finite-range dependence.
\begin{theo}\label{thm:poisson-approximation}
Let $\mc{J}$ be a finite index set,
and $(Y_j)_{j \in \mc{J}}$ be a collection of $[0,1]$-valued random variables with dependency neighbourhoods $(\mc{B}_j)_{j \in \mc{J}}$, 
i.e. the joint distribution of $(Y_k)_{k \not \in \mc{B}_j}$ is independent of $Y_j$ for each $j \in \mc{J}$.
Writing
\begin{align}
\lambda := \sum_{j \in \mc{J}} \mb{E}[Y_j],
\qquad b_1 := \sum_{j \in \mc{J}} \sum_{k \in \mc{B}_j} \mb{E}[Y_j]\mb{E}[Y_k],
\qquad b_2 := \sum_{j \in \mc{J}} \sum_{j \ne k \in \mc{B}_j} \mb{E}[Y_j Y_k],
\end{align}

\noindent we have
\begin{align}
\left|\mb{E}\left[ \prod_{j \in \mc{J}} (1 - Y_j)\right] - e^{-\lambda}\right|
\le \min(1, \lambda^{-1}) (b_1 + b_2).
\end{align}
\end{theo}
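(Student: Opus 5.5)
The plan is to reduce the statement to the classical Chen--Stein Poisson approximation theorem of Arratia, Goldstein and Gordon, by realising $\prod_j(1-Y_j)$ as a conditional probability of the event that no indicator fires. On an enlarged probability space, take i.i.d.\ uniform random variables $(U_j)_{j \in \mc{J}}$ independent of $(Y_j)_{j\in\mc{J}}$, and set $I_j := \ind_{\{U_j \le Y_j\}}$ and $W := \sum_{j} I_j$. Conditionally on $(Y_j)$, the $I_j$ are independent Bernoulli$(Y_j)$ variables, so
\begin{align*}
\mb{E}\left[\prod_{j\in\mc{J}}(1-Y_j)\right] = \mb{P}(W=0).
\end{align*}
Moreover, $p_j := \mb{E}[I_j] = \mb{E}[Y_j]$, and for $j \neq k$ we have $\mb{E}[I_jI_k] = \mb{E}[Y_jY_k]$. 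Since the $U$'s are independent of everything else, $I_j$ is independent of $(I_k)_{k\notin\mc{B}_j}$, so $(\mc{B}_j)$ remains a dependency neighbourhood structure for the indicators. Therefore $\lambda = \mb{E}[W]$, and $b_1, b_2$ coincide with the usual Chen--Stein quantities (with $k=j$ included in $b_1$ as usual). The third Chen--Stein term $b_3$ vanishes by exact independence outside neighbourhoods.

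It then remains to prove $|\mb{P}(W=0)-e^{-\lambda}| \le \min(1,\lambda^{-1})(b_1+b_2)$. I will sketch Stein's method in case a self-contained proof is wanted.
\begin{itemize}
\item[1.] Let $g$ solve the Stein equation $\lambda g(w+1) - w g(w) = \ind_{\{w=0\}} - e^{-\lambda}$ on $\mb{N}_0$. This gives $\mb{P}(W=0) - e^{-\lambda} = \sum_j \mb{E}[p_j g(W+1) - I_j g(W)]$.
\item[2.] For each $j$, let $V_j := \sum_{k\notin\mc{B}_j} I_k$, which is independent of $I_j$. Then $\mb{E}[I_j g(W)] = \mb{E}[I_j g(W - I_j + 1)]$, since $g(W)$ only matters when $I_j = 1$.
\item[3.] Compare both $g(W+1)$ and $g(W-I_j+1)$ with $g(V_j+1)$. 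Telescoping across the at most $W - V_j$ (respectively $W - I_j - V_j$) unit steps, each costs at most $\|\Delta g\|_\infty$. Using the independence of $I_j$ from $V_j$ to cancel the main terms, this yields
\begin{align*}
|\mb{E}[p_jg(W+1) - I_jg(W)]| \le \|\Delta g\|_\infty\Big(\sum_{k\in\mc{B}_j} p_jp_k + \sum_{j\neq k\in\mc{B}_j}\mb{E}[I_jI_k]\Big).
\end{align*}
\item[4.] Sum over $j$ to obtain $\|\Delta g\|_\infty (b_1+b_2)$.
\end{itemize}

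The main technical step is the Stein-factor bound $\|\Delta g\|_\infty \le (1-e^{-\lambda})/\lambda \le \min(1,\lambda^{-1})$. For the specific test function $\ind_{\{0\}}$ this can be checked directly from the explicit solution $g(w) = \frac{(w-1)!}{\lambda^w}e^{\lambda}\big[\ind_{\{0\}}\text{-Poisson tail difference}\big]$; alternatively I would simply cite the Barbour--Holst--Janson bound for indicator test functions. Since this is entirely classical, in the paper I would present the coupling reduction in full and cite Arratia--Goldstein--Gordon (Theorem 1) for the Chen--Stein estimate.
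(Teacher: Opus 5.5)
Your proposal is correct and is essentially the paper's proof. Both use the same auxiliary uniforms $U_j$ and indicators $\ind_{\{U_j \le Y_j\}}$ to turn $\mb{E}[\prod_j(1-Y_j)]$ into $\mb{P}(W=0)$, and then apply Arratia--Goldstein--Gordon's Theorem 1 with $b_3=0$. Your Stein-method sketch with the factor $(1-e^{-\lambda})/\lambda$ is a correct self-contained derivation of that cited estimate. Note only that transferring the dependency neighbourhoods to the indicators implicitly uses $j \in \mc{B}_j$, which is standard.
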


\begin{proof}
Let $U_j$ be i.i.d. $\mr{Uniform}([0,1])$ random variables independent of $(Y_j)_{j \in \mc{J}}$.
Then $W := \sum_{j \in \mc{J}} \ind_{\{U_j \le Y_j\}}$ satisfies
\begin{align*}
\mb{P}(W = 0) = \mb{E}\left[ \mb{P}\left(\bigcap_{j\in\mc{J}} \{U_j > Y_j\} \bigg| (Y_j)_j\right)\right]
= \mb{E}\left[ \prod_{j\in \mc{J}} (1 - Y_j)\right]
\end{align*}

\noindent and our claim follows immediately from \cite[Theorem 1]{AGG1989}.
\end{proof}

\section{Analysis of $\beta$-variations}\label{sec:analysis-beta-variation}
To keep our notation simple we will only treat constant functions $f(x) \equiv f \in (0, 1]$ on $\overline{Q}$,
but it should be clear from the arguments that the extension to general $f = f_0 \ind_D$ 
where $f_0 \in C(\overline{D})$ and $D \subset Q$ is Jordan measurable\footnote{
The core idea of our proof is to partition the domain into sub-cubes
and analyse each piece using tail estimates for critical GMCs, which extend naturally to Jordan measurable sets.
},
or even positive linear combinations of functions of such form, can be achieved by approximations.
Similarly, we assume for convenience that $R = 1$ for the support of the seed kernel $\rho$ in Definition \ref{def:star-scale-invariant},
even though the proof can be easily adapted to treat more general correlation cutoff.

That being said, we still need to introduce some extra notation before discussing any proofs.
For each $n > 1$, define the rescaled field $H_n(y):= X_{\ell(n)}(n^{-1}y)$ where  $\ell(n) := \log n$.
Thanks to the martingale decomposition of $\star$-scale invariant fields, we can write
\begin{align*}
M(Q_{n, \mbf{j}}) 
&= \int_{Q_{\mbf{j}}} W_{\ell(n)}(n^{-1}y) n^{-d} \widetilde{M}^{(\ell(n))}(\dd y)\\
& =: e^{\gamma_c H_n(\mbf{j}) - \left(\frac{\gamma_c^2}{2}+d\right) \ell(n)}\int_{Q_{\mbf{j}}} e^{\gamma_c \left(H_n(y) - H_n(\mbf{j})\right)} \widetilde{M}^{(\ell(n))}(\dd y)
=: e^{\gamma_c H_n(\mbf{j}) - 2d \ell(n)}\widetilde{M}_{n, \mbf{j}}
\end{align*}

\noindent where $\widetilde{M}^{(\ell(n))}$ is obtained from $M^{(\ell(n))}$ in \eqref{eq:coarse-fine-decomposition} by rescaling the dummy integration variable.
In particular, $\widetilde{M}^{(\ell(n))}$ is independent of $\mc{F}_{\ell(n)}$ and has the same distribution as $M$.
We also write
\begin{align}\label{eq:H-notation}
H_{n, \mbf{j}}(x) := H_n(\mbf{j} + x) - H_n(\mbf{j})
\qquad \text{and} \qquad 
H_{n, \mbf{j}}^* := \sup_{x \in Q} |H_{n, \mbf{j}}(x)|.
\end{align}

Unless otherwise specified, we fix $\alpha := 1/\beta \in (0, 1)$
and suppress the dependence of any absolute constants on these parameters;
the following shorthand is also used:
\begin{align} \label{eq:shorthand}
q_{n, \mbf{j}} := \left[\ell(n)^{1/2} f^{\alpha} e^{\gamma_c H_n(\mbf{j}) - 2d \ell(n)}\right]^\beta,
\qquad 
Y_{n, \mbf{j}} 
:= 1- \exp \left\{-q_{n, \mbf{j}}\widetilde{M}_{n, \mbf{j}}^{\beta}\right\}.
\end{align}

The key to proving Theorem \ref{thm:uniform-integrability} and Theorem \ref{thm:critical-to-supercritical}
is to apply Theorem \ref{thm:poisson-approximation} to $Y_{n, \mbf{j}}$ with conditional expectation given $\mc{F}_{\ell(n)}$.
This is a natural choice since the product $\prod_\mbf{j} \left(1 - Y_{n, \mbf{j}}\right)$ will immediately correspond to $\exp\left(- \widetilde{V}_n^{\beta}(f)\right)$ which we want to study.
Note that since $\mr{supp}(\rho) \subset B(0, 1/2)$,
the dependency neighbourhood of $Y_{n, \mbf{j}}$ is simply given by 
\begin{align*}
\mc{B}_{n, \mbf{j}}
:= \{\mbf{k} \in \mc{J}_n: \|\mbf{k} - \mbf{j}\|_\infty \le 1\}
\end{align*}

\noindent and we have $|\mc{B}_{n, \mbf{j}}|\le 3^d$ for every $\mbf{j} \in \mc{J}_n$.

Our analysis will be performed on events with high probability (see Lemma \ref{lem:bound-acosta} for justification):
for each fixed $a \in (0, \tfrac{3}{2})$, $L > 0$ and $\mbf{j} \in \mc{J}_{n}$ let
\begin{align}\label{eq:good-events}
\begin{split}
m_n(a, L) &:= \gamma_c \ell(n) - \frac{a \log \ell(n)}{\gamma_c} + L,\\
A_{n}(a, L) & := \left\{|X_{\ell(n)}(x)| \le m_n(a, L)\quad \text{for all   $x \in [-2, 2]^d$} \right\}\\
\text{and} \qquad A_{n, \mbf{j}}(a, L) & := \left\{|X_{\ell(n)}(n^{-1}\mbf{j})| \le m_n(a, L) \right\}.
\end{split}
\end{align}

\subsection{Estimate for $b_1$}
Let
\begin{align*}
b_{n, 1} := \sum_{\mbf{j} \in \mc{J}_n} \sum_{\mbf{k} \in \mc{B}_{n,\mbf{j}}} \mb{E} \left[Y_{n, \mbf{j}} | \mc{F}_{\ell(n)}\right]\mb{E} \left[Y_{n, \mbf{k}} | \mc{F}_{\ell(n)}\right].
\end{align*}

\noindent Our goal is to establish the following estimate:
\begin{lem}\label{lem:b1-estimate}
For each $a \in (0, 3/2)$ and $L > 0$, there exists some constant $C = C(a, L) \in (0, \infty)$ such that
\begin{align*}
\mb{E}\left[ b_{n, 1} \ind_{A_{n}(a, L)}\right] \le C f^{2\alpha} \ell(n)^{1/2 - a}
\end{align*}

\noindent uniformly in $n \ge 2$.
\end{lem}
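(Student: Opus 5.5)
Conditionally on $\mc{F}_{\ell(n)}$, the variable $\widetilde{M}_{n,\mbf{j}}$ is $\int_{Q_{\mbf{j}}} e^{\gamma_c H_{n,\mbf{j}}(y-\mbf{j})}\widetilde{M}^{(\ell(n))}(\dd y)$, where $\widetilde{M}^{(\ell(n))}\overset{(d)}{=}M$ is independent of $\mc{F}_{\ell(n)}$. So $\widetilde{M}_{n,\mbf{j}} \le e^{\gamma_c H^*_{n,\mbf{j}}} \widetilde{M}^{(\ell(n))}(Q_{\mbf{j}})$. Since $\mb{P}(M(Q)>y)\le C/y$ for all $y>0$ by Theorem \ref{thm:critical-tail}, the one-sided bound of Lemma \ref{lem:laplace-estimate} gives
\begin{align*}
\mb{E}[Y_{n,\mbf{j}}|\mc{F}_{\ell(n)}] \le C\Gamma(1-\alpha)\, q_{n,\mbf{j}}^{\alpha} e^{\gamma_c H^*_{n,\mbf{j}}}
= C' \ell(n)^{1/2} f^{\alpha} e^{\gamma_c H_n(\mbf{j}) - 2d\ell(n)} e^{\gamma_c H^*_{n,\mbf{j}}}.
\end{align*}
We also keep the trivial bound $\mb{E}[Y_{n,\mbf{k}}|\mc{F}_{\ell(n)}]\le 1$ available. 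Writing $Z_{\mbf{j}}:=\ell(n)^{1/2} e^{\gamma_c H_n(\mbf{j})-2d\ell(n)+\gamma_c H^*_{n,\mbf{j}}}$, this reduces the claim to bounding
\begin{align*}
f^{2\alpha}\sum_{\mbf{j}}\sum_{\mbf{k}\in\mc{B}_{n,\mbf{j}}}\mb{E}[Z_{\mbf{j}}Z_{\mbf{k}}\ind_{A_n(a,L)}].
\end{align*}

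For $\mbf{k}\in\mc{B}_{n,\mbf{j}}$, the value $H_n(\mbf{k})$ differs from $H_n(\mbf{j})$ by at most the oscillation of $H_n$ over a cube of side $\sim 2$ around $\mbf{j}$. Hence $Z_{\mbf{j}}Z_{\mbf{k}} \le \ell(n)\, e^{2\gamma_c H_n(\mbf{j}) - 4d\ell(n)} e^{C\gamma_c \widehat{H}_{n,\mbf{j}}}$, where $\widehat{H}_{n,\mbf{j}} := \sup_{|x|\le 3\sqrt d}|H_n(\mbf{j}+x)-H_n(\mbf{j})|$. On $A_n(a,L)$ we have $|H_n(\mbf{j})|\le m_n(a,L)$. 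We condition on $H_n(\mbf{j})=z$, where $H_n(\mbf{j})\sim N(0,\ell(n))$, and apply Lemma \ref{lem:lgf-oscillations} with $t=\ell(n)$ and spatial scale $Le^{-t}$. This bounds the oscillation factor by $Ce^{C|z|/\ell(n)}\le C''$ uniformly for $|z|\le m_n\lesssim \ell(n)$. Summing over the $n^d$ indices and the at most $3^d$ neighbours, it remains to estimate
\begin{align*}
n^{d}\,\ell(n)\int_{-\infty}^{m_n} e^{2\gamma_c z - 4d\ell(n)} \frac{e^{-z^2/(2\ell(n))}}{\sqrt{2\pi\ell(n)}}\,\dd z .
\end{align*}

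This Gaussian integral is the crux. With $\gamma_c=\sqrt{2d}$, the exponent $2\gamma_c z - z^2/(2\ell)$ increases on $z\le 2\gamma_c\ell$, so the integral is dominated by the endpoint $z=m_n=\gamma_c\ell-\frac{a}{\gamma_c}\log\ell+L$. There the exponent equals
\begin{align*}
2\gamma_c m_n - m_n^2/(2\ell)-4d\ell = -d\ell + a\log\ell + O(1)+o(1).
\end{align*}
The integrand has slope $\approx 2\gamma_c-\gamma_c=\gamma_c$ at the endpoint, so the integral is $\asymp \ell^{-1/2}\, n^{-d}\ell^{a}$. Multiplying by $n^d\ell$ gives $\ell(n)^{1/2+a}$, which is too big: this crude route loses exactly a factor $\ell^{2a}$. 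The fix is to avoid the square, which wastes the truncation. On $A_n$, $e^{\gamma_c H_n(\mbf{k})-2d\ell}\le e^{\gamma_c m_n - 2d\ell}=e^{-d\ell}\ell^{-a}e^{\gamma_c L}$. So I will bound one factor, $Z_{\mbf{k}}\lesssim \ell^{1/2}n^{-d}\ell^{-a}e^{\gamma_c H^*_{n,\mbf{k}}}$, pathwise by the maximum, and take the expectation of the other. Since $\mb{E}[e^{\gamma_c H_n(\mbf{j})-2d\ell}]=n^{-d}$, the remaining term is
\begin{align*}
\sum_{\mbf{j}}\mb{E}[Z_{\mbf{j}}e^{\gamma_cH^*_{n,\mbf{k}}}]\lesssim n^d\ell^{1/2}n^{-d}=\ell^{1/2},
\end{align*}
again using Lemma \ref{lem:lgf-oscillations}; dropping the indicator only helps here. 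The total is then $\lesssim f^{2\alpha}\ell^{1/2}\ell^{1/2}n^{-d}\ell^{-a}n^{d}\cdot$ const. I expect the main obstacle to be the bookkeeping of these powers. Handling $\mbf{k}$ via $A_n$ on the whole box $[-2,2]^d$ should produce exactly $\ell(n)^{1/2-a}$, once I check that the normalisations $\ell^{1/2}$ in $q^\alpha$ appear once from each factor, with the $n^{d}$ counting cancelled by the $n^{-d}$ from the maximum. If the powers do not match, the fallback is to use $\min(1,\cdot)$ for the $\mbf{k}$-factor instead.
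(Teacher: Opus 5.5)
Your proof has a genuine gap, though it is self-inflicted. The route you discard is exactly the paper's proof and is correct. The route you keep does not give the stated bound.

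\textbf{The first route works.} You mis-signed the logarithmic term at the endpoint. Since $\gamma_c^2 = 2d$, one has $2\gamma_c m_n = 4d\ell - 2a\log\ell + 2\gamma_c L$ and $m_n^2/(2\ell) = d\ell - a\log\ell + \gamma_c L + o(1)$. Hence
\begin{align*}
2\gamma_c m_n - \frac{m_n^2}{2\ell} - 4d\ell = -d\ell - a\log\ell + \gamma_c L + o(1),
\end{align*}
not $-d\ell + a\log\ell + O(1)$. The exponent is concave with slope $\approx \gamma_c$ at $m_n$, so the integral is $\lesssim \ell^{-1/2} n^{-d} \ell^{-a}$. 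Multiplying by $n^d \ell$ gives exactly $\ell(n)^{1/2-a}$. This is precisely the paper's argument, including your use of Lemma \ref{lem:lgf-oscillations} with $|z| \le m_n$. The paper phrases the Gaussian step via Cameron--Martin: since $(2\gamma_c)^2/2 = 4d$,
$\mb{E}[e^{2\gamma_c X_{\ell(n)}(\mbf{0}) - 4d\ell(n)}\ind_{\{X_{\ell(n)}(\mbf{0}) \le m_n\}}] = \mb{P}(X_{\ell(n)}(\mbf{0}) \le m_n - 2\gamma_c \ell(n))$,
followed by the Gaussian tail bound.

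\textbf{The replacement route falls short.} It contains a second arithmetic slip: $e^{\gamma_c m_n - 2d\ell} = \ell^{-a} e^{\gamma_c L}$, with no factor $e^{-d\ell}$. So on $A_n(a,L)$ you only get $Z_{\mbf{k}} \lesssim \ell^{1/2-a} e^{\gamma_c H^*_{n,\mbf{k}}}$. The $n^d$ from summing over $\mbf{j}$ is already cancelled inside $\sum_{\mbf{j}} \mb{E}[Z_{\mbf{j}} e^{\gamma_c H^*_{n,\mbf{k}}}] \lesssim \ell^{1/2}$, so there is nothing left for your extra $n^{d}$ to cancel against. The correct total is $\ell(n)^{1/2}\cdot \ell(n)^{1/2-a} = \ell(n)^{1-a}$, short of the claim by a factor $\ell(n)^{1/2}$.

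This loss is structural, not bookkeeping. $Z_{\mbf{k}}$ is comparable to $Z_{\mbf{j}}$, and it is near its cap $\ell^{1/2-a}$ only when $H_n(\mbf{j})$ lies within $O(1)$ of $m_n$. Under the tilt $e^{\gamma_c H_n(\mbf{j}) - d\ell(n)}$, that event has probability $O(\ell^{-1/2})$, and decoupling the two factors discards it. The $\min(1,\cdot)$ fallback cannot recover this either.

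A bound of order $\ell(n)^{1-a}$ would still vanish for $a \in (1, 3/2)$, which suffices for Theorem \ref{thm:critical-to-supercritical}, but it is not the lemma as stated. Keep your first route with the corrected exponent.
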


\begin{proof}
Recall that $\widetilde{M}^{(\ell(n))}$ is independent of $\mc{F}_{\ell(n)}$ and has the same distribution as $M$.
By Theorem \ref{thm:critical-tail}, for any ($\mc{F}_{\ell(n)}$-measurable) non-negative $\varphi \in C(\overline{Q_{\mbf{j}}})$, we have
\begin{align*}
\mb{P}\left(\widetilde{M}^{(\ell(n))}(\varphi) > u \big| \mc{F}_{\ell(n)}\right) \sim \frac{\int_{Q_{\mbf{j}}} \varphi(y) \dd y}{u \sqrt{\pi d}}
\qquad \text{as $u \to \infty$}.
\end{align*}

\noindent Moreover, we also have the uniform estimate
\begin{align*}
\mb{P}\left(\widetilde{M}^{(\ell(n))}(Q_{\mbf{j}}) > u \big| \mc{F}_{\ell(n)}\right)
= \mb{P}\left(M(Q) > u\right) \le \frac{C_M}{u} \qquad \forall u > 0
\end{align*}

\noindent for some absolute $C_M \in (0, \infty)$,
and hence
\begin{align}\label{eq:tail-uniform-bound}
\mb{P}\left(\widetilde{M}_{n, \mbf{j}} > u \big| \mc{F}_{\ell(n)}\right) \le \frac{C_M e^{\gamma_c H_{n, \mbf{j}}^*}}{u} \qquad \forall u > 0
\end{align}

\noindent where $H_{n, \mbf{j}}^*$ is defined in \eqref{eq:H-notation}.
Thus it follows from \eqref{eq:tail-uniform-bound} and Lemma \ref{lem:laplace-estimate} that
\begin{align}\label{eq:conditional-mean-Y}
\begin{split}
\mb{E} \left[Y_{n, \mbf{k}} | \mc{F}_{\ell(n)}\right]
& \le C_M \Gamma(1-\alpha) f^{\alpha}\ell(n)^{1/2} e^{\gamma_c H_n(\mbf{j}) - 2d\ell(n) +2\gamma_c \sup_{|x| \le 2d}  |H_{n, \mbf{j}}(x)|}
\end{split}
\end{align}

\noindent for all $\mbf{k} \in \mc{B}_{n, \mbf{j}}$. 
Substituting this back into the definition of $b_{n, 1}$, we see that 
\begin{align}
\notag 
&\mb{E}\left[ b_{n, 1} \ind_{A_n(a, L)}\right]
\le \sum_{\mbf{j} \in \mc{J}_n} \sum_{\mbf{k} \in \mc{B}_{n, \mbf{j}}} 
\mb{E} \left[\mb{E} \left[Y_{n, \mbf{j}} | \mc{F}_{\ell(n)}\right]\mb{E} \left[Y_{n, \mbf{k}} | \mc{F}_{\ell(n)}\right] \ind_{A_n(a, L)}\right]\\
\notag
& \le C f^{2\alpha} 3^d \ell(n) \sum_{\mbf{j} \in \mc{J}_n} 
\mb{E}\left[e^{2\gamma_c H_n(\mbf{j}) - 4d\ell(n) + 4\gamma_c \sup_{|x| \le 2d}  |H_{n, \mbf{j}}(x)|} \ind_{A_{n, \mbf{j}}(a, L)}\right]\\
\label{eq:b1-bound1}
& = C f^{2\alpha} 3^d n^d \ell(n)
\mb{E}\left[e^{2\gamma_c X_{\ell(n)}(\mbf{0}) - 4d\ell(n)} e^{4\gamma_c \sup_{|x| \le 2d/n}  |X_{\ell(n)}(x) -  X_{\ell(n)}(\mbf{0})|} \ind_{A_{n, \mbf{0}}(a, L)}\right]
\end{align}

\noindent where the last step follows from the translation invariance of the field $X_{\ell(n)}$.
By Lemma \ref{lem:lgf-oscillations},
\begin{align*}
&\mb{E}\left[e^{2\gamma_c X_{\ell(n)}(\mbf{0}) - 4d\ell(n)} e^{4\gamma_c \sup_{|x| \le 2d/n}  |X_{\ell(n)}(x) -  X_{\ell(n)}(\mbf{0})|}  \ind_{A_{n, \mbf{0}}(a, L)}\right]\\
& \le 
\mb{E}\left[e^{2\gamma_c X_{\ell(n)}(\mbf{0}) - 4d\ell(n)} 
\mb{E}\left[e^{4\gamma_c \sup_{|x| \le 2d/n}  |X_{\ell(n)}(x) -  X_{\ell(n)}(\mbf{0})|} \bigg| X_{\ell(n)}(0)\right] 
 \ind_{A_{n, \mbf{0}}(a, L)}\right]\\
& \le C \exp\left(C \frac{|m_n(a, L)|}{\ell(n)}\right)
\mb{E}\left[e^{2\gamma_c X_{\ell(n)}(\mbf{0}) - 4d\ell(n)}  \ind_{\{|X_{\ell(n)}(\mbf{0})| \le m_n(a, L)\}}\right].
\end{align*}

\noindent Noting that $(2\gamma_c)^2 / 2 = 4d$, the Cameron-Martin theorem implies that the above expression is
\begin{align*}
&\lesssim \mb{P} \left(X_{\ell(n)}(\mbf{0}) \le m_n(a, L) - 2\gamma_c \ell(n)\right)\\
& \le \frac{1}{\sqrt{\ell(n)}} \exp\left\{-\frac{1}{2\ell(n)}\left(\gamma_c \ell(n) + \frac{a \log \ell(n)}{\gamma_c} - L  \right)^2\right\}
\lesssim_{a, L} n^{-d} \ell(n)^{-(1/2+a)},
\end{align*}

\noindent and we conclude that \eqref{eq:b1-bound1} satisfies the desired bound in our Lemma.

\end{proof}

\subsection{Estimate for $b_2$}
We now show that
\begin{align*}
b_{n, 2} := \sum_{\mbf{j} \in \mc{J}_n} \sum_{\mbf{j} \ne \mbf{k} \in \mc{B}_{n,\mbf{j}}} 
\mb{E} \left[Y_{n, \mbf{j}} Y_{n, \mbf{k}} | \mc{F}_{\ell(n)}\right]
\end{align*}

\noindent satisfies the following conditional estimate.
\begin{lem}\label{lem:b2-estimate}
For each $a \in (0, 3/2)$, $L > 0$ and $h \in (\tfrac{1}{2}, \tfrac{1}{2} + \tfrac{1}{2\sqrt{d}})$, 
there exists some constant $C = C(a, L, h) \in (0, \infty)$ such that
\begin{align*}
\mb{E}\left[ b_{n, 2} \ind_{A_{n}(a, L)}\right] \le C f^{2h \alpha} \ell(n)^{(2h-1)(\tfrac{1}{2} -a)}
\end{align*}

\noindent uniformly in $n \ge 2$.
\end{lem}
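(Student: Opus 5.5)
Fix $\mbf{j} \ne \mbf{k}$ with $\|\mbf{j}-\mbf{k}\|_\infty \le 1$. The plan is to bound the conditional joint term by a fractional power, average over the coarse field with a Cameron--Martin computation at tilt $2h\gamma_c$, and then sum over $\mbf{j}$, keeping the neighbourhood size $3^d$ and the oscillation factors as constants.

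\textbf{Fractional-power reduction.} Since $Y \in [0,1]$, we have $1-e^{-x} \le \min(1,x) \le x^{h/\beta}$ for $h/\beta \le 1$; this is fine because $h<1<\beta$. Hence
\begin{align*}
Y_{n,\mbf{j}}Y_{n,\mbf{k}} \le \left(q_{n,\mbf{j}}\, q_{n,\mbf{k}}\right)^{h/\beta} \left(\widetilde{M}_{n,\mbf{j}} \widetilde{M}_{n,\mbf{k}}\right)^{h}.
\end{align*}
By \eqref{eq:shorthand}, $q_{n,\mbf{j}}^{h/\beta} = \ell(n)^{h/2} f^{h\alpha} e^{h\gamma_c H_n(\mbf{j}) - 2dh\ell(n)}$.

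\textbf{Conditional expectation given $\mc{F}_{\ell(n)}$.} First bound $\widetilde{M}_{n,\mbf{j}} \le e^{\gamma_c H^*_{n,\mbf{j}}}\widetilde{M}^{(\ell(n))}(Q_{\mbf{j}})$, and similarly for $\mbf{k}$. The cubes $Q_{\mbf{j}}$ and $Q_{\mbf{k}}$ are disjoint and separated by a coordinate hyperplane. Lemma \ref{lem:mom-hyperplane}, applied to the copy $\widetilde{M}^{(\ell(n))} \overset{(d)}{=} M$, which is independent of $\mc{F}_{\ell(n)}$, together with translation invariance, then gives
\begin{align*}
\mb{E}\left[(\widetilde{M}^{(\ell(n))}(Q_{\mbf{j}})\widetilde{M}^{(\ell(n))}(Q_{\mbf{k}}))^h\right] \le C_h.
\end{align*}
Next write $H_n(\mbf{k}) = H_n(\mbf{j}) + H_{n,\mbf{j}}(\mbf{k}-\mbf{j})$ and absorb every local oscillation into $e^{C\sup_{|x|\le 2d}|H_{n,\mbf{j}}(x)|}$. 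This yields
\begin{align*}
\mb{E}[b_{n,2}\ind_{A_n(a,L)}] \lesssim f^{2h\alpha}\,\ell(n)^{h}\, n^d\, \mb{E}\left[e^{2h\gamma_c X_{\ell(n)}(0) - 4dh\ell(n)} e^{C\sup_{|x|\le 2d/n}|X_{\ell(n)}(x)-X_{\ell(n)}(0)|}\ind_{\{|X_{\ell(n)}(0)|\le m_n\}}\right].
\end{align*}
Lemma \ref{lem:lgf-oscillations} removes the oscillation factor at a cost of $e^{C m_n/\ell(n)} = O(1)$.

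\textbf{Gaussian computation (main obstacle).} This is where the exponents must match exactly. Set $t = \ell(n)$ and $\lambda = 2h\gamma_c$. Then $\mb{E}[e^{\lambda X_t}] = e^{\lambda^2 t/2} = e^{4dh^2 t}$, so Cameron--Martin gives
\begin{align*}
e^{4dh^2t - 4dht}\,\mb{P}\left(G\sqrt{t} \le m_n - 2h\gamma_c t\right).
\end{align*}
Because $h>1/2$, we have $2h\gamma_c t > m_n$. Writing $y = (2h-1)\gamma_c t + \tfrac{a\log t}{\gamma_c} - L$, the Gaussian tail bound is $t^{-1/2}\cdot\tfrac{\sqrt t}{y}\cdot e^{-y^2/(2t)}$.

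Expanding $y^2/(2t)$ gives $(2h-1)^2 dt + (2h-1)a\log t + O(1)$. Adding the exponent $4dh^2t - 4dht$ and the factor $n^d = e^{dt}$ leaves
\begin{align*}
d t\left[4h^2 - 4h + 1 - (2h-1)^2\right] = 0,
\end{align*}
so the exponential factors cancel, as required.

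The prefactors are then $t^{h}\cdot t^{-1/2}\cdot t^{-1/2}\cdot t^{-(2h-1)a}$, which combine to $t^{(2h-1)(1/2 - a)}$. This is the claimed bound. The $t^{-1/2}$ coming from the ratio $\sqrt t / y$ is essential here, so I would carry out this step carefully, using the Mills-ratio bound rather than the crude bound used in Lemma \ref{lem:b1-estimate}.
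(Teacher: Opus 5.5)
Your argument follows the paper's proof step for step: the bound $\min(1,u)\le u^{h\alpha}$, Lemma \ref{lem:mom-hyperplane} for the adjacent cubes, translation invariance together with Lemma \ref{lem:lgf-oscillations} for the oscillation factor, and then Cameron--Martin at tilt $2h\gamma_c$. Your cancellation of the exponential factors is also correct.

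The final polynomial bookkeeping, however, is wrong as written, in two places that happen to cancel. First, the Mills-ratio bound gives
\begin{align*}
\mb{P}\left(G\sqrt{t} \ge y\right) \le \frac{\sqrt{t}}{\sqrt{2\pi}\, y}\, e^{-y^2/(2t)} \asymp t^{-1/2}\, e^{-y^2/(2t)},
\end{align*}
since $y \asymp t$, and there is no further factor $t^{-1/2}$. You appear to have mixed up $\frac{t}{y}$ times the $N(0,t)$ density $(2\pi t)^{-1/2}e^{-y^2/(2t)}$ with $\frac{\sqrt{t}}{y}$ times that density. As a result, your stated tail $\frac{1}{y}e^{-y^2/(2t)}$ underestimates the probability by a factor $\sqrt{t}$. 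Second, $t^{h}\cdot t^{-1/2}\cdot t^{-1/2}\cdot t^{-(2h-1)a} = t^{h-1-(2h-1)a}$, which is not $t^{(2h-1)(1/2-a)} = t^{h-1/2-(2h-1)a}$.

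With the single correct factor $t^{-1/2}$ you get $t^{h}\cdot t^{-1/2}\cdot t^{-(2h-1)a} = t^{(2h-1)(1/2-a)}$. This is exactly the claimed bound and exactly the paper's line $\ell(n)^{h-1/2-(2h-1)a}$, so the proof goes through once this is fixed. Relatedly, the bound in Lemma \ref{lem:b1-estimate} is not crude. Its prefactor $1/\sqrt{\ell(n)}$ is precisely the Mills-ratio factor $\sqrt{t}/y$, and the same bound is all that is needed here.
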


\begin{proof}
Since $h \alpha < 1$ and $\min(1, u) \le u^{h\alpha}$ for $u \ge 0$,
we have for any $\mbf{j} \ne \mbf{k} \in \mc{B}_{n, \mbf{j}}$ that
\begin{align*}
\mb{E} \left[Y_{n, \mbf{j}} Y_{n, \mbf{k}} | \mc{F}_{\ell(n)}\right]
& \le \mb{E}\left[\min\left(1, q_{n, \mbf{j}} q_{n, \mbf{k}} \widetilde{M}_{n, \mbf{j}}^{\beta}\widetilde{M}_{n, \mbf{k}}^{\beta}\right)  \big|\mc{F}_{\ell(n)}\right]\\
& \le \mb{E}[ \left(M(Q_\mbf{j})M(Q_\mbf{k})\right)^h] e^{\gamma_ch \left(H_{n, \mbf{j}}^*+ H_{n, \mbf{k}}^*\right)} 
\left(q_{n, \mbf{j}} q_{n, \mbf{k}}\right)^{h \alpha}\\
& \le C \ell(n)^h f^{2h\alpha} e^{2\gamma_c h H_n(\mbf{j}) - 4dh \ell(n)}
e^{4 \gamma_c h \sup_{|x| \le 2d} |H_{n, \mbf{j}}(x)|}
\end{align*}

\noindent where the third line is obtained from Lemma \ref{lem:mom-hyperplane}
and the constant $C \in (0, \infty)$ is independent of $n$ and $\mbf{j} \ne \mbf{k} \in \mc{B}_{n, \mbf{j}}$.
Following the same argument as in the proof of the previous lemma, we have 
\begin{align*}
\mb{E}\left[ b_{n, 2} \ind_{A_{n}(a, L)}\right] 
& \lesssim f^{2h\alpha}\ell(n)^h n^{-4dh + \frac{(2\gamma_c h)^2}{2} + d} 
\mb{E}\left[e^{2\gamma_c h H_n(\mbf{0}) - \frac{(2\gamma_c h)^2}{2}\ell(n)}   \ind_{A_{n, \mbf{0}}(a, L)}\right]\\
& \le f^{2h\alpha} \ell(n)^h n^{-4dh + \frac{(2\gamma_c h)^2}{2} + d} \mb{P}\left(X_{\ell(n)}(0) \le m_n(a, L) - 2\gamma_c h \ell(n)\right)\\
& \lesssim f^{2h\alpha} \ell(n)^{h-1/2 - (2h-1)a} n^{d[-4h(1-h) + 1 - (2h-1)^2]}
\end{align*}

\noindent which leads to our claim.
\end{proof}

\subsection{Proof of Theorem \ref{thm:uniform-integrability}}
\subsubsection{Key cutoff estimate and its consequence}
Define 
\begin{align}\label{eq:poisson-mean}
\lambda_n := \sum_{\mbf{j} \in \mc{J}_n}\mb{E}\left[Y_{n, \mbf{j}}| \mc{F}_{\ell(n)}\right].
\end{align}

Our task here is to provide a suitable upper bound on $\lambda_n$.
To do so we need the following cutoff estimate that controls the size of $H_{n, \mbf{j}}^*$.
\begin{lem}\label{lem:local-fluctuation}
There exists some $c \in (0, \infty)$ such that for any $r \in (0, 1)$, the random variable
\begin{align*}
\mc{E}_n:=\sum_{\mbf{j} \in \mc{J}_n} e^{c \|H_{n, \mbf{j}}\|_{C^{1/2}(Q)}^2} M_{\ell(n)}(Q_{n, \mbf{j}})
\end{align*}

\noindent satisfies $\sup_{n \ge 2} \mb{E} \left[\mc{E}_{n}^r \right] < \infty$.
\end{lem}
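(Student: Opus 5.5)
The idea is to bound a first moment of $\mc{E}_n$ on a barrier event, and then interpolate to the fractional moment $r<1$. This is the standard route to the uniform bound in Lemma \ref{lem:SH-moment-uniform}. The new ingredient is the weight $e^{c\|H_{n,\mbf{j}}\|^2_{C^{1/2}(Q)}}$. It is harmless for two reasons: after rescaling by $n$, the field $H_{n,\mbf{j}}$ is a smooth Gaussian field on a unit cube, and its correlation with the tilted value $H_n(\mbf{j})$ is only through a bounded regression coefficient.

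\textbf{Step 1 (barrier and interpolation).} For $L\in\mb{N}$ let
\[
\Omega_L:=\Big\{X_s(x)\le \gamma_c s+L\ \ \forall s\ge 0,\ x\in[-2,2]^d\Big\}.
\]
I will use the known estimate $\mb{P}(\Omega_L^c)\lesssim e^{-\kappa L}$ for some $\kappa>0$, from the literature on the maximum of the $\star$-scale field (in the spirit of Lemma \ref{lem:bound-acosta}, summed over integer times). By Hölder's inequality on the pieces $\Omega_L\setminus\Omega_{L-1}$,
\[
\mb{E}[\mc{E}_n^r]\le \sum_{L\ge1}\mb{E}\big[\mc{E}_n\ind_{\Omega_L}\big]^r\,\mb{P}(\Omega_{L-1}^c)^{1-r}.
\]
So it suffices to show $\mb{E}[\mc{E}_n\ind_{\Omega_L}]\le C L^{k}$ uniformly in $n$, with some fixed power $k$.

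\textbf{Step 2 (first moment via Girsanov).} Write
\[
M_{\ell(n)}(Q_{n,\mbf{j}})=\sqrt{\ell(n)}\int_{Q_{n,\mbf{j}}}e^{\gamma_cX_{\ell(n)}(y)-d\ell(n)}\,\dd y.
\]
By translation invariance, $\mb{E}[\mc{E}_n\ind_{\Omega_L}]$ is at most $n^d$ times the contribution of a single cube. For each $y$, tilt by $e^{\gamma_cX_{\ell(n)}(y)-d\ell(n)}$. Under the tilted measure the following hold:
\begin{itemize}
\item[(a)] $B_s:=X_s(y)-\gamma_c s$ is a standard Brownian motion.
\item[(b)] The rescaled field $H_{n,\mbf{j}}$ picks up the deterministic shift $\gamma_c\big(K_{\ell(n)}(n^{-1}(\cdot)-y)-K_{\ell(n)}(0)\big)$, which is bounded in $C^{1}(Q)$ uniformly in $n$.
\item[(c)] As in the proof of Lemma \ref{lem:lgf-oscillations}, we can decompose $H_{n,\mbf{j}}=\frac{r(\cdot)}{\ell(n)}B_{\ell(n)}+O(1)+R_n$. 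Here $\|r\|_{C^1}=O(1)$ and $R_n$ is independent of the path $(B_s)$ up to lower-order corrections.
\end{itemize}
Keeping only the barrier at the point $y$, the ballot estimate gives density $\lesssim L(L-z)\ell(n)^{-3/2}$ for the event $\{B_s\le L\ \forall s\le\ell(n),\ B_{\ell(n)}\in\dd z\}$. This factor $\ell(n)^{-3/2}$ (together with the Gaussian-density normalisation) cancels the $\sqrt{\ell(n)}$ prefactor, and $n^d\cdot n^{-d}$ (number of cubes times volume) is $O(1)$. We are left with
\[
\int_{-\infty}^{L} L(L-z)\,e^{-z^2/(2\ell(n))}\,\mb{E}\Big[e^{c'\|R_n\|^2_{C^{1/2}}}\Big]\,e^{c'z^2/\ell(n)^2}\,\frac{\dd z}{\sqrt{\ell(n)}}\;\lesssim\;L^{k}.
\]
The factor $e^{c'z^2/\ell(n)^2}$ is absorbed by the Gaussian density in $z$.

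\textbf{Step 3 (Gaussian integrability of the Hölder norm — the main obstacle).} I need some $c>0$ with $\sup_n\mb{E}[e^{c\|R_n\|^2_{C^{1/2}(Q)}}]<\infty$. Borell–TIS with Dudley's bound on the sup alone only controls $\sup|R_n|$, not the Hölder seminorm. My plan is to use the Sobolev inequality of Theorem \ref{thm:sobolev-inequality},
\[
\|R_n-R_n(0)\|_{C^{1/2}(Q)}\lesssim\|\nabla R_n\|_{L^{p_d}(Q)}.
\]
Lemma \ref{lem:lp-bound-gaussian} then gives
\[
\mb{E}\big[\|\nabla R_n\|^m_{L^{p_d}}\big]^{1/m}\lesssim\sqrt{m}\,\sqrt{-\partial_1^2K(0)}.
\]
The rescaled covariance $K_{\ell(n)}(n^{-1}\cdot)$ has second derivative at $0$ bounded uniformly in $n$, since $\int_0^{\ell(n)}e^{2u-2\ell(n)}\,\dd u\lesssim1$. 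Moment growth of order $\sqrt{m}$ is equivalent to a sub-Gaussian tail, which yields the exponential-square moment for $c$ small. The technical care goes into the regression decomposition: the covariance of $R_n$ is not exactly translation invariant, so I would apply Lemma \ref{lem:lp-bound-gaussian} to $H_{n,\mbf{j}}$ itself and handle the rank-one correction by hand. Once this is done, Steps 1–2 conclude the proof.
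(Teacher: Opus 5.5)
\textbf{There is a genuine gap in Step 2(c), and the whole argument rests on it.}

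Your displayed integral pulls $\mb{E}[e^{c'\|R_n\|^2_{C^{1/2}(Q)}}]$ out of the ballot density. That requires $R_n$ to be independent of the entire path $(B_s)_{s\le\ell(n)}$, not only of $B_{\ell(n)}$. Regressing on the endpoint alone does not achieve this, and the leftover correlation is not lower order.

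Write $a=n^{-1}(\mbf{j}+x)-y$ and $b=n^{-1}\mbf{j}-y$. Then $\mr{Cov}(H_{n,\mbf{j}}(x),X_s(y))=K_s(a)-K_s(b)$, and its $s$-derivative $g_x(s):=\rho(e^sa)-\rho(e^sb)=O(e^{s-\ell(n)})$ is concentrated in the last $O(1)$ units of time. Consequently $\mr{Cov}(R_n(x),B_{\ell(n)-1})$ is of order one. Those final increments are exactly the ones whose law is distorted by conditioning on $\{B_s\le L\ \forall s\le\ell(n),\ B_{\ell(n)}=z\}$, since the path must approach $z$ from below the barrier. So the expectation does not factor.

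You also cannot decouple cheaply. H\"older or Cauchy--Schwarz loses a power of the barrier probability $\asymp L\,\ell(n)^{-1/2}$. That factor is precisely what cancels the prefactor $\sqrt{\ell(n)}$, so you would be left with a positive power of $\ell(n)$.

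To rescue your route you would have to do two things:
\begin{enumerate}
\item Regress on the whole path. Under the tilt, the path-dependent part is, up to a bounded deterministic term, $g_x(0)B_{\ell(n)}+\int_0^{\ell(n)}(B_{\ell(n)}-B_s)\,\partial_sg_x(s)\,\dd s$.
\item Prove an exponential-square moment bound for this functional of the time-reversed path under the barrier/endpoint conditioning. This is an entropic-repulsion type estimate.
\end{enumerate}
That, not Step 3, is the real obstacle. Step 3 is essentially the paper's Lemma \ref{lem:fluctuation-contract} (Cameron--Martin, Theorem \ref{thm:sobolev-inequality} and Lemma \ref{lem:lp-bound-gaussian}) and works as you describe.

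\textbf{How the paper avoids this.} It never combines barriers with the H\"older weight; it uses scale separation instead. It splits $H_{n,\mbf{j}}=H_{n,\mbf{j},1}+H_{n,\mbf{j},2}$ at time $\ell(n)/4$.
\begin{itemize}
\item The coarse part satisfies $\max_{\mbf{j}}\|H_{n,\mbf{j},1}\|_{C^{1/2}(Q)}\lesssim n^{-1/2}\|\nabla X_{\ell(n)/4}\|_{L^{p_d}(Q)}$ (Lemma \ref{lem:H-n-j-1}).
\item The fine part is independent of $\mc{F}_{\ell(n)/4}$ and has bounded tilted exponential-square moments.
\end{itemize}
Hence $\mb{E}[\mc{E}_n\mid\mc{F}_{\ell(n)/4}]\lesssim e^{2c\max_{\mbf{j}}\|H_{n,\mbf{j},1}\|^2_{C^{1/2}(Q)}}M_{\ell(n)/4}(Q)$. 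Conditional Jensen and H\"older then reduce everything to Lemma \ref{lem:SH-moment-uniform}. You can graft this onto your proof by running Steps 1--2 only for the unweighted $M_{\ell(n)/4}(Q)$.

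\textbf{Two smaller slips.}
\begin{itemize}
\item After multiplying the ballot density $\lesssim L(L-z)\ell(n)^{-3/2}e^{-z^2/(2\ell(n))}$ by $\sqrt{\ell(n)}$, the measure is $\dd z/\ell(n)$, not $\dd z/\sqrt{\ell(n)}$. As written, your integral is of order $L\sqrt{\ell(n)}$.
\item Summing Lemma \ref{lem:bound-acosta} over integer times does not give $\mb{P}(\Omega_L^c)\lesssim e^{-\kappa L}$. It only produces a factor $t^{-3/(2\gamma_c C)}$ with an unspecified $C$, and it says nothing between integer times. That tail bound has to be quoted from the literature on barrier estimates, as the paper implicitly does through Lemma \ref{lem:SH-moment-uniform}.
\end{itemize}
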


The proof of Lemma \ref{lem:local-fluctuation} is postponed to Section \ref{subsubsec:cutoff-proof}.
Here we would like to discuss its implication, starting with:
\begin{cor}\label{cor:local-fluctuation}
Let $L > 0$, and define 
\begin{align}\label{eq:holder-compact-set}
  \mc{K}_L := \left\{\varphi \in C(\overline{Q}):\|\varphi\|_{C^{1/2}(Q)}\le L \right\}.
\end{align}

\noindent Following the notation in Lemma \ref{lem:local-fluctuation},
for any $r \in (0, 1)$ there exists some $C = C(r) \in (0, \infty)$ such that the random variable
\begin{align}\label{eq:marked-error}
\widetilde{\mc{E}}_{n}(L, p) :=\sum_{\mbf{j} \in \mc{J}_n} e^{p \|H_{n, \mbf{j}}\|_{C^{1/2}(Q)}} M_{\ell(n)}(Q_{n, \mbf{j}}) \ind_{\{H_{n, \mbf{j}}|_{\overline{Q}} \,\not \in \mc{K}_L\}}
\end{align}

\noindent satisfies
\begin{align*}
\sup_{n \ge 2} \mb{E}\left[\widetilde{\mc{E}}_{n}(L, p)^r\right] \le C \exp\left(\frac{rp^2}{2c} - \frac{rcL^2}{2}\right)
\end{align*}

\noindent for all $p, L \ge 0$.
\end{cor}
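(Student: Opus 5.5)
The bound will follow from Lemma \ref{lem:local-fluctuation} through a pointwise domination of the summands of $\widetilde{\mc{E}}_n(L,p)$ by those of $\mc{E}_n$, with an explicit constant. Write $N_{\mbf{j}} := \|H_{n,\mbf{j}}\|_{C^{1/2}(Q)}$. On the event $\{H_{n,\mbf{j}}|_{\overline{Q}} \notin \mc{K}_L\}$ we have $N_{\mbf{j}} > L$. The aim is
\begin{align*}
e^{pN_{\mbf{j}}}\ind_{\{N_{\mbf{j}} > L\}} \le C_0\, e^{cN_{\mbf{j}}^2}
\end{align*}
with $C_0$ of order $\exp\left(\frac{p^2}{2c} - \frac{cL^2}{2}\right)$.

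To get this, split $cN^2 = \frac{c}{2}N^2 + \frac{c}{2}N^2$.

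For the first half, complete the square: $pN - \frac{c}{2}N^2 \le \frac{p^2}{2c}$ for all $N \ge 0$.

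For the second half, use $N > L$ on the indicator event, which gives $\frac{c}{2}N^2 \ge \frac{cL^2}{2}$.

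Combining the two,
\begin{align*}
e^{pN}\ind_{\{N>L\}} \le e^{\frac{p^2}{2c}} e^{\frac{c}{2}N^2}\ind_{\{N>L\}} \le e^{\frac{p^2}{2c} - \frac{cL^2}{2}}\, e^{cN^2}.
\end{align*}
Here $c$ is the constant from Lemma \ref{lem:local-fluctuation}.

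Summing over $\mbf{j}$ with the non-negative weights $M_{\ell(n)}(Q_{n,\mbf{j}})$ gives the almost-sure domination
\begin{align*}
\widetilde{\mc{E}}_n(L,p) \le \exp\left(\frac{p^2}{2c} - \frac{cL^2}{2}\right)\mc{E}_n.
\end{align*}
Raising to the power $r \in (0,1)$ and taking expectations then yields
\begin{align*}
\mb{E}\left[\widetilde{\mc{E}}_n(L,p)^r\right] \le \exp\left(\frac{rp^2}{2c} - \frac{rcL^2}{2}\right)\sup_{m\ge 2}\mb{E}[\mc{E}_m^r].
\end{align*}
Lemma \ref{lem:local-fluctuation} says this supremum is finite, so we may take $C(r) := \sup_{m \ge 2} \mb{E}[\mc{E}_m^r]$. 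This constant is independent of $p$, $L$ and $n$, as the statement requires.

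There is no genuine obstacle here; all the difficulty sits in Lemma \ref{lem:local-fluctuation}. The only point to check is that the constant $c$ in the exponent of the corollary is the same $c$ as in that lemma. It is, because the splitting of $cN^2$ into two halves was chosen exactly to produce the factors $\frac{1}{2c}$ and $\frac{c}{2}$ appearing in the stated bound.
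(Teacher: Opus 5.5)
Your proposal is correct and follows the paper's proof: the paper uses the same elementary inequality $e^{pm}\ind_{\{m>L\}} \le \exp\left(\frac{p^2}{2c} + \frac{cm^2}{2}\right)\ind_{\{m>L\}} \le \exp\left(\frac{p^2}{2c} - \frac{cL^2}{2}\right)e^{cm^2}$ to dominate $\widetilde{\mc{E}}_n(L,p)$ by $\exp\left(\frac{p^2}{2c} - \frac{cL^2}{2}\right)\mc{E}_n$. It then concludes, as you do, from the uniform fractional moment bound in Lemma \ref{lem:local-fluctuation}.
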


\begin{proof}
Using the elementary inequality
\begin{align*}
e^{pm} \ind_{\{m > L\}} 
\le \exp\left(\frac{p^2}{2c} + \frac{cm^2}{2}\right)\ind_{\{m > L\}}
\le \exp\left(\frac{p^2}{2c} - \frac{cL^2}{2}\right) e^{cm^2} \qquad \forall p, m, L \ge 0,
\end{align*}

\noindent we have
\begin{align*}
\widetilde{\mc{E}}_{n}(L, p)
\le \mc{E}_{n} \exp\left(\frac{p^2}{2c} - \frac{cL^2}{2}\right)
\end{align*}

\noindent and hence the desired moment bound follows from Lemma \ref{lem:local-fluctuation}.
\end{proof}

We are now ready to explain the proof of our Laplace transform estimate.
\begin{proof}[Proof of Theorem \ref{thm:uniform-integrability}]
Let $f \equiv q \in [0, \tfrac{1}{2}]$.
Since $Y_{n, \mbf{j}} \in [0, 1]$, we have
\begin{align*}
1-\exp\left(-q \widetilde{V}_n^\beta(Q)\right)
= 1 - \prod_{\mbf{j} \in \mc{J}_n} (1 - Y_{n, \mbf{j}})
\le \min\left(1, \sum_{\mbf{j} \in \mc{J}_n} Y_{n, \mbf{j}}\right).
\end{align*}

\noindent In particular,
\begin{align*}
\mb{E}\left[1-\exp\left(-q \widetilde{V}_n^\beta(Q)\right)\right]
\le \mb{E}\left[\min\left(1, \sum_{\mbf{j} \in \mc{J}_n} Y_{n, \mbf{j}}\right)\right]
\le \mb{E}\left[\min(1, \lambda_n)\right]
\end{align*}

\noindent where the last step follows from Jensen's inequality conditional on $\mc{F}_{\ell(n)}$.
But using the first inequality in \eqref{eq:conditional-mean-Y}, we see that
\begin{align*}
\lambda_n \le Cf^{\alpha}\sum_{\mbf{j} \in \mc{J}_n} e^{2\gamma_c H_{n, \mbf{j}}^*} M_{\ell(n)}(Q_{n, \mbf{j}})
\le Cf^{\alpha} \left[e^{2\gamma_c}M_{\ell(n)}(Q) + \widetilde{\mc{E}}_n(1, 2\gamma_c)\right].
\end{align*}

\noindent As $r / \alpha \in (0, 1)$, the function $x \mapsto x^{r/\alpha}$ is subadditive for $x \ge 0$ and thus 
\begin{align*}
\mb{E}\left[\min(1, \lambda_n)\right]
\le \mb{E}\left[\lambda_n^{r/\alpha}\right]
\le C^{r/\alpha} f^{r}  
\left\{e^{2\gamma_c r / \alpha}\mb{E}\left[M_{\ell(n)}(Q)^{r/\alpha}\right] 
+ \mb{E}\left[\widetilde{\mc{E}}_n(1, 2\gamma_c)^{r/\alpha}\right]\right\}
= O(f^{r})
\end{align*}

\noindent by Lemma \ref{lem:SH-moment-uniform} and Corollary \ref{cor:local-fluctuation}.
This concludes our proof.
\end{proof}

\subsubsection{Proof of the integrability estimate for $\mc{E}_n$}\label{subsubsec:cutoff-proof}
Let us set up some notation.
For any $0 \le s \le t$, recall $X_{s,t} := X_t - X_s$ and define
\begin{align}\label{eq:X-s-t-j}
X_{s, t, \mbf{j}}(x) := \left(X_t - X_s\right)\left(e^{-t}\left(\mbf{j} + x\right)\right),
\qquad \overline{X}_{s, t, \mbf{j}}(x) := X_{s, t, \mbf{j}}(x) - X_{s, t, \mbf{j}}(0).
\end{align}

\noindent One can easily check that
\begin{align}\label{eq:cov-s-t-j}
\mb{E}\left[X_{s, t, \mbf{j}}(x) X_{s, t, \mbf{j}}(y)\right]
= \int_0^{t-s} \rho(e^{-u} (x-y)) du
\end{align}

\noindent and we claim that:

\begin{lem}\label{lem:fluctuation-contract}
There exists $c \in (0, \infty)$ and $C \in (0, \infty)$ independent of $0 \le s \le t$ such that 
\begin{align*}
\sup_{x_0 \in Q, \, \mbf{j} \in \mb{R}^d} 
\mb{E}\left[e^{\gamma_c X_{s, t, \mbf{j}}(x_0) - d(t-s)} \exp\left(c \|\overline{X}_{s, t, \mbf{j}}\|_{C^{1/2}(Q)}^2\right)\right] \le C.
\end{align*}
\end{lem}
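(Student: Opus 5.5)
The plan is to remove the exponential tilt by a Girsanov/Cameron--Martin change of measure, and then prove a Gaussian-type concentration bound for the Hölder norm of the resulting shifted field.

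\textbf{Reduction by translation and Cameron--Martin.} By stationarity of $X_{s,t}$, the quantity does not depend on $\mbf{j}$. Write $T := t-s$ and $Z(x) := X_{s,t,\mbf{j}}(x)$. By \eqref{eq:cov-s-t-j}, $Z$ is a centred stationary Gaussian field on $Q$ with covariance $k_T(x-y) := \int_0^{T}\rho(e^{-u}(x-y))\,du$ and variance $T$.

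Since $\gamma_c^2/2 = d$, the factor $e^{\gamma_c Z(x_0) - dT}$ is exactly a Radon--Nikodym density. Under the tilted measure, $Z$ has the same covariance and acquires the mean shift $\gamma_c k_T(\cdot - x_0)$. Hence $\overline{Z} := Z - Z(0)$ becomes $\overline{Z}^{\circ} + g_T$, where $\overline{Z}^{\circ}$ is the centred version and
\[
g_T(x) = \gamma_c\left[k_T(x-x_0) - k_T(-x_0)\right].
\]

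\textbf{Bounding the deterministic shift.} Here $\nabla k_T(y) = \int_0^T e^{-u}\nabla\rho(e^{-u}y)\,du$, so $\|\nabla k_T\|_\infty \le \|\nabla \rho\|_\infty$ uniformly in $T$. Since $|x-x_0| \le \sqrt{d}$, it follows that $\|g_T\|_{C^{1/2}(Q)} \le C_0$ uniformly in $T$ and $x_0$. Therefore it suffices to show
\[
\sup_{T \ge 0}\mb{E}\left[\exp\left(2c\|\overline{Z}^{\circ}\|_{C^{1/2}(Q)}^2\right)\right] < \infty
\]
for some small $c>0$, using $\|a+b\|^2 \le 2\|a\|^2 + 2\|b\|^2$.

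\textbf{Controlling the Hölder norm via Sobolev.} I would use Theorem \ref{thm:sobolev-inequality} with $x_0 = 0$, which gives $\|\overline{Z}^{\circ}\|_{C^{1/2}(Q)} \le C\|\nabla Z\|_{L^{p_d}(Q)}$. The covariance $k_T$ is a convolution-type integral of rescalings of $\rho$. Its second derivatives at $0$ satisfy
\[
-\partial_1^2 k_T(0) = -\int_0^T e^{-2u}\,\partial_1^2\rho(0)\,du \le -\tfrac{1}{2}\,\partial_1^2\rho(0),
\]
uniformly in $T$. Note that $k_T$ is not compactly supported: its support grows like $e^T$. However, the proof of Lemma \ref{lem:lp-bound-gaussian} only uses stationarity, rotational invariance and $\mathrm{Var}(\partial_1 Z(0))$, so it applies verbatim. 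It yields
\[
\mb{E}\left[\|\nabla Z\|_{L^{p_d}(Q)}^m\right]^{1/m} \le C\sqrt{m}
\]
for all $m \ge 1$, uniformly in $T$.

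\textbf{Summing the moment series.} Expanding the exponential in a power series gives
\[
\mb{E}\left[e^{c'\|\nabla Z\|^2}\right] = \sum_k \frac{c'^k}{k!}\,\mb{E}\left[\|\nabla Z\|^{2k}\right] \le \sum_k \frac{(c'C^2\cdot 2k)^k}{k!},
\]
which is finite for $c'$ small by Stirling's formula.

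\textbf{Main obstacle.} The delicate point is uniformity in $T$: both the mean shift and the gradient variance must stay bounded even though the variance of $Z$ itself grows like $T$. The computations above show this works precisely because spatial derivatives pick up the decaying factor $e^{-u}$, and subtracting $Z(0)$ removes the growing constant mode.
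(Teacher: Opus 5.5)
Your proposal is correct and follows the paper's proof essentially step for step: Cameron--Martin turns the tilt into a mean shift that is uniformly bounded in $C^{1/2}(Q)$, and then Theorem \ref{thm:sobolev-inequality} together with Lemma \ref{lem:lp-bound-gaussian} (using the $T$-uniform bound on $\mathrm{Var}(\partial_1 Z)$) lets you sum the moment series for small $c$. One small correction: $k_T$ is in fact compactly supported for each finite $T$ (in $B(0, e^{T}R/2)$), and the constant in Lemma \ref{lem:lp-bound-gaussian} does not depend on the support size, so that lemma applies directly without your ``verbatim'' caveat.
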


\begin{proof}
Let us commence with
\begin{align*}
\mb{E}\left[e^{\gamma_c X_{s, t, \mbf{j}}(x_0) - d(t-s)} \exp\left(c \|\overline{X}_{s, t, \mbf{j}}\|_{C^{1/2}(Q)}^2\right)\right]
& = \mb{E}\left[\exp\left(c \|\widetilde{X}_{s, t, \mbf{j}}\|_{C^{1/2}(Q)}^2\right)\right]
\end{align*}

\noindent where
\begin{align*}
\widetilde{X}_{s, t, \mbf{j}}(x) 
& := \overline{X}_{s, t, \mbf{j}}(x) + \gamma_c \mb{E}\left[ X_{s, t, \mbf{j}}(x_0)\overline{X}_{s, t, \mbf{j}}(x) \right]
=: \overline{X}_{s, t, \mbf{j}}(x) + m_{s, t, \mbf{j}}(x)
\end{align*}

\noindent by the Cameron-Martin theorem.
The shift in mean
\begin{align*}
m_{s, t, \mbf{j}}(x)
& = \gamma_c \int_0^{t-s} \left[\rho(e^{-u}(x_0-x)) - \rho(e^{-u}x_0)\right] du 
= O\left(\gamma_c \left(\|\rho'\|_{\infty} \sup_{x \in Q} |x|\right) \int_0^{t-s} e^{-u}du\right)
\end{align*}

\noindent is uniformly bounded in absolute value and so are its first partial derivatives,
i.e. $\|m_{s, t, \mbf{j}}\|_{C^{1/2}(Q)}$ is bounded uniformly in all parameters. 
Meanwhile, by Theorem \ref{thm:sobolev-inequality} we have
\begin{align*}
\|\overline{X}_{s, t, \mbf{j}}\|_{C^{1/2}(Q)}
\le C_1 \|\nabla \overline{X}_{s, t, \mbf{j}}\|_{L^{p_d}(Q)}
= C_1 \|\nabla X_{s, t, \mbf{j}}\|_{L^{p_d}(Q)}
\end{align*}

\noindent for some absolute constant $C_1 \in (0, \infty)$.
Using \eqref{eq:cov-s-t-j}, we see that
\begin{align*}
\mr{Var}(\partial_1 X_{s, t, \mbf{j}}(x)) 
= -\rho''(0) \int_0^{t-s} e^{-2u} du \le -\rho''(0)
\end{align*}

\noindent is bounded uniformly in all variables, 
and it follows from Lemma \ref{lem:lp-bound-gaussian} that
\begin{align*}
\mb{E}\left[\|\nabla \overline{X}_{s, t, \mbf{j}}\|_{L^{p_d}(Q)}^m\right]^{1/m}
\le C_2 \sqrt{\max(m, p_d)} \qquad \forall m \ge 1
\end{align*}

\noindent for some absolute constant $C_2 \in (0, \infty)$.
Therefore,
\begin{align*}
\mb{E}\left[\exp\left(c \|\widetilde{X}_{s, t, \mbf{j}}\|_{C^{1/2}(Q)}^2\right)\right]
&\le \exp\left(2c \|m_{s, t, \mbf{j}}\|_{C^{1/2}(Q)}^2\right) 
\mb{E}\left[\exp\left(2c C_1^2 \|\nabla X_{s, t, \mbf{j}}\|_{L^{p_d}(Q)}^2\right)\right]\\
&\le \exp\left(2c \|m_{s, t, \mbf{j}}\|_{C^{1/2}(Q)}^2\right) 
\sum_{m \ge 0}\frac{(2c C_1^2)^m}{m!}\mb{E}\left[\|\nabla X_{s, t, \mbf{j}}\|_{L^{p_d}(Q)}^{2m}\right]
\end{align*}

\noindent where the last expression is summable for $c > 0$ sufficiently small,
yielding an upper bound that is uniform in $0 \le s \le t$, $x_0 \in Q$ and $\mbf{j} \in \mb{R}^d$, as claimed.
\end{proof}

\begin{lem}\label{lem:H-n-j-1}
Let $H_{n, \mbf{j}, 1}(x) := X_{\ell(n)/4}\left(n^{-1}(\mbf{j} +x)\right) - X_{\ell(n)/4}\left(n^{-1}\mbf{j}\right)$.
There exists some $c > 0$ and $C \in (0, \infty)$ such that
\begin{align*}
\sup_{n \ge 2} \mb{E}\left[\exp\left(c \sqrt{n} \max_{\mbf{j} \in \mc{J}_n}  \|H_{n, \mbf{j}, 1}\|_{C^{1/2}(Q)}^2\right)\right] < \infty.
\end{align*}
\end{lem}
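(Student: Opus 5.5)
The plan is to exploit the fact that $X_{\ell(n)/4}$ only lives on scale $e^{-\ell(n)/4} = n^{-1/4}$, whereas $H_{n,\mbf{j},1}$ samples it on a cube of side $n^{-1}$. After rescaling to the unit cube, the field is therefore extremely smooth, with gradient of order $n^{-3/4}$. This gives a Gaussian tail for $\|H_{n,\mbf{j},1}\|_{C^{1/2}(Q)}$ with variance proxy $\lesssim n^{-3/2}$, and hence room to spare against both the factor $\sqrt{n}$ in the exponent and the union bound over $n^d$ indices.

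\textbf{Pointwise bound.} Since $H_{n,\mbf{j},1}(0)=0$, Theorem \ref{thm:sobolev-inequality} (with $x_0=0$) gives
\begin{align*}
\|H_{n,\mbf{j},1}\|_{C^{1/2}(Q)} \le C_1\|\nabla H_{n,\mbf{j},1}\|_{L^{p_d}(Q)} \le C_1 \sup_{y\in[0,2]^d}\big|n^{-1}\nabla X_{\ell(n)/4}(n^{-1}y)\big|
\end{align*}
uniformly in $\mbf{j}\in\mc{J}_n$, where $n^{-1}y$ ranges over $[0,2]^d$. It therefore suffices to control $G_n := \sup_{z\in[0,2]^d}|\nabla X_{\ell(n)/4}(z)|$ and to show $\sup_n \mb{E}[\exp(c\, n^{-3/2}\sqrt{n}\, C_1^2 G_n^2)]<\infty$. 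Since $n^{-3/2}\sqrt n = n^{-1}$, this is a very weak requirement.

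\textbf{Bound on $G_n$.} Each component $\partial_i X_t$ is a stationary centred Gaussian field with variance
\begin{align*}
-\rho''(0)\int_0^t e^{2u}\,du \lesssim e^{2t},
\end{align*}
which at $t=\ell(n)/4$ is $\lesssim n^{1/2}$. Its canonical metric satisfies $d(z,w)\lesssim e^{2t}|z-w|$, using the second derivative $\int_0^t e^{3u}\,du$ together with the interpolation $\min(a,b)\le\sqrt{ab}$. Dudley's bound \eqref{eq:Dudley-entropy1} over $[0,2]^d$ then gives $\mb{E}[\sup|\partial_i X_t|]\lesssim e^{t}\sqrt{t}$. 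Borell--TIS \eqref{eq:Borell-TIS}, with $\sigma^2\lesssim e^{2t}$, yields
\begin{align*}
\mb{P}(G_n > C e^t\sqrt{t} + u) \le 2d\, e^{-u^2/(Ce^{2t})}.
\end{align*}
Consequently $\mb{E}[\exp(\theta G_n^2)]$ is bounded whenever $\theta e^{2t}\le \epsilon$, up to a factor $\exp(C\theta e^{2t}t)$ coming from the mean. With $\theta = cC_1^2 n^{-1}$ and $e^{2t}=n^{1/2}$, we have $\theta e^{2t}\asymp n^{-1/2}\to 0$ and $\theta e^{2t}t\to 0$, so the supremum over $n$ is finite.

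\textbf{Main obstacle.} The main difficulty is the bookkeeping of the Dudley/entropy estimate for the derivative field uniformly in $t$. If smoothness of $\rho$ makes that step awkward, the fallback is to apply Lemma \ref{lem:lp-bound-gaussian} directly to $\nabla H_{n,\mbf{j},1}$, giving $L^{p_d}$ moments of size $\sqrt{m}\, n^{-3/4}$. One then sums the exponential series as in Lemma \ref{lem:fluctuation-contract} and replaces the maximum over $\mbf{j}$ by the sum over $n^d$ terms. The moments give $\mb{E}\exp(c' n^{3/2}\|\cdot\|^2)\le C$ for each $\mbf{j}$, and Hölder's inequality then loses nothing harmful, since we may take
\begin{align*}
\mb{E}\big[\max_{\mbf{j}}e^{c\sqrt n \|\cdot\|^2}\big] \le \Big(\sum_{\mbf{j}} \mb{E}\big[e^{c\, n^{3/2}\|\cdot\|^2}\big]\Big)^{n^{-1}} \le (Cn^d)^{1/n},
\end{align*}
which is bounded.
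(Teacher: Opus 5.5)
Your argument is correct, and it takes a different route from the paper's.

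\textbf{The paper's route.} The paper never works with $\nabla H_{n,\mbf{j},1}$ on the small cube. It uses $H_{n,\mbf{j},1}(0)=0$ to reduce to the seminorm. It then uses the scaling of the H\"older seminorm, $[H_{n,\mbf{j},1}]_{C^{1/2}(Q)}\le n^{-1/2}[X_{\ell(n)/4}]_{C^{1/2}(Q)}$, which is uniform in $\mbf{j}$. Finally it applies Theorem \ref{thm:sobolev-inequality} on the unit cube and Lemma \ref{lem:lp-bound-gaussian} to the single variable $\|\nabla X_{\ell(n)/4}\|_{L^{p_d}(Q)}$. This needs no supremum of the derivative field, no chaining and no handling of the maximum over $\mbf{j}$. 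The price is that the scaling gains only $n^{-1/2}$, so the bookkeeping is exactly balanced: $\sqrt{n}\cdot n^{-1}\cdot \mr{Var}(\partial_1 X_{\ell(n)/4})\asymp 1$.

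\textbf{Your route and what it buys.} You apply the Sobolev inequality on each small cube and use the full Lipschitz scaling $\nabla H_{n,\mbf{j},1}=n^{-1}(\nabla X_{\ell(n)/4})(n^{-1}(\mbf{j}+\cdot))$. This leaves polynomial room: $n^{-3/4}G_n$ has variance proxy of order $n^{-1}$, so the exponent is typically $O(n^{-1}\log n)$ rather than $O(1)$. The cost is that you must control $\sup_{[0,2]^d}|\nabla X_{\ell(n)/4}|$ by Dudley and Borell--TIS. Your fallback also works: apply Lemma \ref{lem:lp-bound-gaussian} to the stationary field $X_{\ell(n)/4}(n^{-1}(\mbf{j}+\cdot))$ on each cube, then use $\max_{\mbf{j}}Y_{\mbf{j}}\le(\sum_{\mbf{j}}Y_{\mbf{j}}^n)^{1/n}$ together with Jensen (not H\"older) to get $(Cn^d)^{1/n}$.

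\textbf{Two minor slips.} First, your pointwise bound actually gives $\sqrt{n}\,\|H_{n,\mbf{j},1}\|_{C^{1/2}(Q)}^2\le C_1^2 n^{-3/2}G_n^2$, not $C_1^2n^{-1}G_n^2$. This is harmless, since you prove the stronger statement. Second, the canonical-metric bound $d(z,w)\lesssim e^{2t}|z-w|$ follows most directly from $\mr{Var}(\partial_k\partial_i X_t)\lesssim e^{4t}$, not from the interpolation you sketch. In any case, any polynomial-in-$e^t$ H\"older bound suffices for Dudley to give $\mb{E}\sup|\partial_i X_t|\lesssim e^t\sqrt{t}$.
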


\begin{proof}
By definition, $H_{n, \mbf{j}, 1}(0) = 0$.
In particular, we have
\begin{align*}
\max_{\mbf{j} \in \mc{J}_n}  \|H_{n, \mbf{j}, 1}\|_{C^{1/2}(Q)}
&\lesssim \max_{\mbf{j} \in \mc{J}_n} \,[H_{n, \mbf{j}, 1}]_{C^{1/2}(Q)}\\
&\lesssim n^{-1/2} \left[X_{\ell(n)/4}\right]_{C^{1/2}(Q)}
\lesssim n^{-1/2} \|\nabla X_{\ell(n)/4}\|_{L^{p_d}(Q)}
\end{align*}

\noindent where the last step follows from Theorem \ref{thm:sobolev-inequality}
and the implicit constants in all the above inequalities are absolute and deterministic.
Since 
\begin{align*}
\mr{Var}(\partial_1 X_{\ell(n)/4}(x)) = -\rho''(0)\int_0^{\ell(n)/4} e^{2u} du
\le C e^{\ell(n)/2} = C \sqrt{n},
\end{align*}

\noindent we obtain from Lemma \ref{lem:lp-bound-gaussian} that
\begin{align*}
\mb{E}\left[ \left(\sqrt{n} \max_{\mbf{j} \in \mc{J}_n}  \|H_{n, \mbf{j}, 1}\|_{C^{1/2}(Q)}^2\right)^m\right]^{1/m}
\lesssim n^{-1/2}\mb{E}\left[  \|\nabla X_{\ell(n)/4}\|_{L^{p_d}(Q)}^{2m}\right]^{1/m}
\lesssim \max(2m, p_d)
\end{align*}

\noindent for all $m \ge 1$ and hence the uniform estimate for exponential moments follows.
\end{proof}

\begin{proof}[Proof of Lemma \ref{lem:local-fluctuation}]
Let us write $H_{n, \mbf{j}}(x) := H_{n, \mbf{j}, 1}(x) + H_{n, \mbf{j}, 2}(x)$ 
where $H_{n, \mbf{j}, 1}$ is defined in Lemma \ref{lem:H-n-j-1}
and $H_{n, \mbf{j}, 2}:= \overline{X}_{s, t, \mbf{j}}$ in \eqref{eq:X-s-t-j} with $t = \ell(n)$ and $s = \ell(n)/4 = t/4$.
Since 
\begin{align*}
\|H_{n, \mbf{j}}\|_{C^{1/2}(Q)}^2
\le 2 \|H_{n, \mbf{j}, 1}\|_{C^{1/2}(Q)}^2 + 2\|H_{n, \mbf{j}, 2}\|_{C^{1/2}(Q)}^2
\end{align*}

\noindent and 
\begin{align*}
M_{\ell(n)}(Q_{n, \mbf{j}})
= \sqrt{\ell(n)}n^{-d}\int_Q 
\left(e^{\gamma_c X_{\ell(n)/4, \ell(n)}(n^{-1}(\mbf{j}+x)) - \frac{3d}{4}\ell(n)}\right) \left(e^{\gamma_c X_{\ell(n)/4}(n^{-1}(\mbf{j}+x)) - \frac{d}{4}\ell(n)}\right)  \dd x,
\end{align*}

\noindent it follows from Lemma \ref{lem:fluctuation-contract} that 
\begin{align*}
\mb{E}\left[\mc{E}_n | \mc{F}_{\ell(n)/4}\right]
\lesssim \sqrt{\ell(n)} n^{-d} \sum_{j \in \mc{J}_n}
\exp\left(2c \|H_{n, \mbf{j}, 1}\|_{C^{1/2}(Q)}^2\right)
\int_Q  \left(e^{\gamma_c X_{\ell(n)/4}(n^{-1}(\mbf{j}+x)) - \frac{d}{4}\ell(n)}\right)  \dd x
\end{align*}

\noindent provided that $c$ is sufficiently small.
Observe that this last expression is 
\begin{align*}
&\lesssim 
\exp\left(2c \max_{\mbf{j} \in \mc{J}_n}\|H_{n, \mbf{j}, 1}\|_{C^{1/2}(Q)}^2\right) 
\sqrt{\ell(n)} n^{-d} \sum_{j \in \mc{J}_n}
\int_Q  \left(e^{\gamma_c X_{\ell(n)/4}(n^{-1}(\mbf{j}+x)) - \frac{d}{4}\ell(n)}\right)  \dd x\\
&= 2\exp\left(2c \max_{\mbf{j} \in \mc{J}_n}\|H_{n, \mbf{j}, 1}\|_{C^{1/2}(Q)}^2\right) M_{\ell(n)/4}(Q)
\end{align*}

\noindent and this implies that for any $r \in (0, 1)$, we have 
\begin{align*}
\mb{E}\left[\mc{E}_n^r\right]
\le \mb{E}\left[\mb{E}\left[\mc{E}_n | \mc{F}_{\ell(n)/4}\right]^r\right]
&\lesssim \mb{E}\left[\left(\exp\left(2c \max_{\mbf{j} \in \mc{J}_n}\|H_{n, \mbf{j}, 1}\|_{C^{1/2}(Q)}^2\right) M_{\ell(n)/4}(Q)\right)^r\right]\\
& \le \mb{E}\left[\exp\left(2c r p \max_{\mbf{j} \in \mc{J}_n}\|H_{n, \mbf{j}, 1}\|_{C^{1/2}(Q)}^2\right)\right]^{1/p} 
\mb{E}\left[M_{\ell(n)/4}(Q)^{\frac{rp}{p-1}}\right]^{\frac{p-1}{p}}
\end{align*}

\noindent for any $p > 1$ by H\"older's inequality.
Choosing $p$ sufficiently large so that $r_p := rp/(p-1) < 1$,
we see that $\mb{E}\left[M_{\ell(n)/4}(Q)^{r_p}\right]^{\frac{p-1}{p}}$ is uniformly bounded in $n$ by Lemma \ref{lem:SH-moment-uniform}.
Since $2rp \le \sqrt{n}$ for $n$ sufficiently large,
we obtain the desired uniform estimate by an application of Lemma \ref{lem:H-n-j-1}.
\end{proof}

\subsection{Proof of Theorem \ref{thm:critical-to-supercritical}}
We now refine the analysis in the previous subsection with the goal of identifying the limit of $\lambda_n$ as $n \to \infty$.
Let us commence with the following observation.
\begin{lem}\label{lem:moment-criticalGMC-compact}
Let $\mc{K} \subset C(\overline{Q})$ be a non-empty compact subset. We have
\begin{align}\label{eq:moment-criticalGMC-compact1}
\sup_{\varphi \in \mc{K}} 
\left|\frac{\mb{E}\left[1 - \exp\left(-q M(e^{\gamma_c \varphi} \ind_Q)^\beta\right)\right]}{q^\alpha \int_Q e^{\gamma_c \varphi(y)}\dd y}
- \frac{\Gamma(1 - \alpha)}{\sqrt{\pi d}} \right| \xrightarrow[q \to 0^+]{} 0.
\end{align}

\noindent In particular, for any $\eps \in (0, 1)$ there exists $q_0 = q_0(\eps) > 0$ such that 
\begin{align}\label{eq:moment-criticalGMC-compact2}
\frac{\mb{E}\left[1 - \exp\left(-q M(e^{\gamma_c \varphi} \ind_Q)^\beta\right)\right]}{q^\alpha \int_Q e^{\gamma_c \varphi(y)}\dd y}
\in \left[\frac{(1-\eps)\Gamma(1 - \alpha)}{\sqrt{\pi d}}, \frac{(1+\eps)\Gamma(1 - \alpha)}{\sqrt{\pi d}}\right]
\end{align}

\noindent for all $\varphi \in \mc{K}$ and $q \in (0, q_0]$.
\end{lem}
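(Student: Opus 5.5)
The plan is to combine the pointwise tail asymptotics of Theorem \ref{thm:critical-tail} with Lemma \ref{lem:laplace-estimate}, and then upgrade to uniformity over $\mc{K}$ by compactness together with a monotonicity/sandwich argument.

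\emph{Pointwise limit.} Fix $\varphi \in \mc{K}$. Since $e^{\gamma_c\varphi}$ is bounded, continuous and non-negative on $\overline{Q}$, Theorem \ref{thm:critical-tail} gives
\begin{align*}
\mb{P}\left(M(e^{\gamma_c\varphi}\ind_Q) > u\right) \sim \frac{\int_Q e^{\gamma_c\varphi(y)}\dd y}{u\sqrt{\pi d}}.
\end{align*}
Lemma \ref{lem:laplace-estimate} then yields convergence of the ratio in \eqref{eq:moment-criticalGMC-compact1} to $\Gamma(1-\alpha)/\sqrt{\pi d}$ for each fixed $\varphi$.

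\emph{Uniformity.} Given $\delta>0$, cover $\mc{K}$ by finitely many sup-norm balls of radius $\delta$ centred at $\varphi_1,\dots,\varphi_N$. For $\|\varphi-\varphi_i\|_\infty\le\delta$ we have the pathwise sandwich
\begin{align*}
e^{-\gamma_c\delta}M(e^{\gamma_c\varphi_i}\ind_Q) \le M(e^{\gamma_c\varphi}\ind_Q) \le e^{\gamma_c\delta}M(e^{\gamma_c\varphi_i}\ind_Q).
\end{align*}
Because $x\mapsto 1-e^{-qx^\beta}$ is increasing, the numerator for $\varphi$ lies between the numerators for $\varphi_i$ evaluated at $q e^{\mp\beta\gamma_c\delta}$. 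By the pointwise limit at the finitely many $\varphi_i$, each of these is $(1+o(1))\,\frac{\Gamma(1-\alpha)}{\sqrt{\pi d}}\,q^\alpha e^{\pm\gamma_c\delta}\int_Q e^{\gamma_c\varphi_i}$ uniformly in $i$ as $q\to0^+$. The denominators satisfy $\int_Q e^{\gamma_c\varphi} \in e^{\pm\gamma_c\delta}\int_Q e^{\gamma_c\varphi_i}$. Hence
\begin{align*}
\limsup_{q\to0^+}\sup_{\varphi\in\mc{K}}\left|\frac{\mb{E}\left[1-\exp\left(-qM(e^{\gamma_c\varphi}\ind_Q)^\beta\right)\right]}{q^\alpha\int_Q e^{\gamma_c\varphi(y)}\dd y}-\frac{\Gamma(1-\alpha)}{\sqrt{\pi d}}\right| \le \frac{\Gamma(1-\alpha)}{\sqrt{\pi d}}\left(e^{2\gamma_c\delta}-1\right).
\end{align*}
Letting $\delta\to0$ gives \eqref{eq:moment-criticalGMC-compact1}, and \eqref{eq:moment-criticalGMC-compact2} is just a restatement of it.

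\emph{Main obstacle.} The only delicate point is the input from Theorem \ref{thm:critical-tail}: it must apply to the weight $e^{\gamma_c\varphi}$ on the open cube, which it does since this function is bounded and continuous. The $\delta$-sandwich then removes any need for uniformity in the tail theorem itself, so beyond this check the argument is a routine compactness bookkeeping.
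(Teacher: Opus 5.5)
Your proposal is correct and follows essentially the same route as the paper: pointwise asymptotics from Theorem \ref{thm:critical-tail} combined with Lemma \ref{lem:laplace-estimate}, then a finite $\delta$-net of $\mc{K}$ in sup-norm, the pathwise sandwich by $e^{\pm\gamma_c\delta}$ plus monotonicity in $q$ to get uniformity, and finally $\delta \to 0^+$. Your limiting error constant $e^{2\gamma_c\delta}-1$ is slightly sharper than the paper's $e^{2\gamma_c\delta}-e^{-2\gamma_c\delta}$, which makes no difference to the conclusion.
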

\begin{proof}
Let $\delta > 0$ be fixed.
By compactness, we can choose a finite $J = J(\delta)$ number of functions $\varphi_j \in \mc{K}$ such that
$\mc{C}(\varphi_j, \delta):=\{\varphi \in C(\overline{Q}): \|\varphi - \varphi_j\|_\infty < \delta\}_{j \le J}$ forms an open cover of $\mc{K}$.
Then 
\begin{align*}
\sup_{\varphi \in \mc{C}(\varphi_j, \delta)} 
\frac{\mb{E}\left[1 - \exp\left(-q M(e^{\gamma_c \varphi} \ind_Q)^\beta\right)\right]}{q^\alpha \int_Q e^{\gamma_c \varphi(y)}\dd y}
& \le \frac{\mb{E}\left[1 - \exp\left(-q e^{\gamma_c \delta \beta}M(e^{\gamma_c \varphi_j} \ind_Q)^\beta\right)\right]}{q^\alpha e^{-\gamma_c \delta} \int_Q e^{\gamma_c \varphi_j(y)}\dd y}\\
\xrightarrow[q \to 0^+]{} e^{2\gamma_c \delta} \frac{\Gamma(1-\alpha)}{\sqrt{\pi d}}\\
\text{and} \qquad \inf_{\varphi \in \mc{C}(\varphi_j, \delta)} 
\frac{\mb{E}\left[1 - \exp\left(-q M(e^{\gamma_c \varphi} \ind_Q)^\beta\right)\right]}{q^\alpha \int_Q e^{\gamma_c \varphi(y)}\dd y}
& \ge \frac{\mb{E}\left[1 - \exp\left(-q e^{-\gamma_c \delta \beta}M(e^{\gamma_c \varphi_j} \ind_Q)^\beta\right)\right]}{q^\alpha e^{+\gamma_c \delta} \int_Q e^{\gamma_c \varphi_j(y)}\dd y}\\
\xrightarrow[q \to 0^+]{} e^{-2\gamma_c \delta} \frac{\Gamma(1-\alpha)}{\sqrt{\pi d}}
\end{align*}

\noindent by Theorem \ref{thm:critical-tail} and Lemma \ref{lem:laplace-estimate}. Therefore,
\begin{align*}
\limsup_{q \to 0^+} & 
\sup_{\varphi \in \mc{K}} 
\left|\frac{\mb{E}\left[1 - \exp\left(-q M(e^{\gamma_c \varphi} \ind_Q)^\beta\right)\right]}{q^\alpha \int_Q e^{\gamma_c \varphi(y)}\dd y}
- \frac{\Gamma(1 - \alpha)}{\sqrt{\pi d}} \right|\\
& \le \max_{j \le J} \left[ 
\limsup_{q \to 0^+} 
\sup_{\varphi \in \mc{C}(\varphi_j, \delta)} 
\left|\frac{\mb{E}\left[1 - \exp\left(-q M(e^{\gamma_c \varphi} \ind_Q)^\beta\right)\right]}{q^\alpha \int_Q e^{\gamma_c \varphi(y)}\dd y}
- \frac{\Gamma(1 - \alpha)}{\sqrt{\pi d}} \right|
\right]\\
& \le \left(e^{2\gamma_c \delta}-e^{-2\gamma_c \delta}\right) \frac{\Gamma(1 - \alpha)}{\sqrt{\pi d}}.
\end{align*}

\noindent Since $\delta > 0$ is arbitrary, we obtain \eqref{eq:moment-criticalGMC-compact1} by sending $\delta \to 0^+$.
The claim \eqref{eq:moment-criticalGMC-compact2} also follows similarly.
\end{proof}

\begin{proof}[Proof of Theorem \ref{thm:critical-to-supercritical}]
To show stable convergence, 
it suffices to check that \eqref{eq:check-stable} holds for any $\mc{F}_\infty$-measurable random variable $U$ taking values in $[0,1]$.
As mentioned earlier,
we only treat the case where $f(x) \equiv f > 0$ is a non-negative constant here,
even though local approximations would allow one to adapt the arguments below for more general functions.
A further simplification is that one only needs to verify \eqref{eq:check-stable} for $\mc{F}_{t_0}$-measurable random variables for fixed but otherwise arbitrary $t_0 > 0$,
since the general case follows by considering the inequality
\begin{align*}
& \left|\mb{E}\left[U \exp\left(-\widetilde{V}_n^\beta(f)\right)\right]
-\mb{E}\left[U \exp\left(-\frac{\Gamma(1-\alpha)}{\sqrt{\pi d}}\int_Q f(x)^{\alpha} M(\dd x)\right)\right]\right|\\
& \qquad \le 2 \mb{E}|U - U_{t_0}|
+ \left|\mb{E}\left[U_{t_0} \exp\left(-\widetilde{V}_n^\beta(f)\right)\right]
-\mb{E}\left[U_{t_0} \exp\left(-\frac{\Gamma(1-\alpha)}{\sqrt{\pi d}}\int_Q f(x)^{\alpha} M(\dd x)\right)\right]\right|
\end{align*}

\noindent where $U_{t_0} = \mb{E}[U | \mc{F}_{t_0}]$,
and by first taking the limit $n \to \infty$ and then $t_0 \to \infty$ one can obtain the desired conclusion by martingale convergence.

Let us now fix such a $\mc{F}_{t_0}$-random variable $U$,
and suppose $n$ is sufficiently large so that $\ell(n) \ge t_0$.
Then for any $a \in (\tfrac{1}{2}, \tfrac{3}{2})$ and $L_1 > 0$ we have
\begin{align*}
& \left|
\mb{E}\left[U \exp\left(-\widetilde{V}_n^\beta(f)\right)\right]
- \mb{E}\left[U e^{-\lambda_n}\right]
\right|\\
& \qquad \le \mb{E}\left[U \bigg|\mb{E}\left[\exp\left(-\widetilde{V}_n^\beta(f)\right) \bigg|\mc{F}_{\ell(n)}\right] - e^{-\lambda_n}\bigg|\right]\\
& \qquad \le 2\mb{P}(A_n(a, L_1)^c) 
+ \mb{E}\left[\bigg|\mb{E}\left[\exp\left(-\widetilde{V}_n^\beta(f)\right) \bigg|\mc{F}_{\ell(n)}\right] - e^{-\lambda_n}\bigg| \ind_{A_n(a, L_1)}\right]\\
& \qquad \le 2\mb{P}(A_n(a, L_1)^c)  + \mb{E}[b_{n, 1} \ind_{A_n(a, L_1)}]  + \mb{E}[b_{n, 2} \ind_{A_n(a, L_1)}]
\end{align*}

\noindent where the last step follows from Theorem \ref{thm:poisson-approximation}.
This upper bound vanishes as $n \to \infty$ by Lemma \ref{lem:bound-acosta}, Lemma \ref{lem:b1-estimate} and Lemma \ref{lem:b2-estimate}.

To conclude the proof, it suffices to show that 
\begin{align}\label{eq:compare-poisson-cond-mean}
\left|\lambda_n - \frac{\Gamma(1-\alpha)}{\sqrt{\pi d}} M_{\ell(n)}(f^\alpha)\right| \xrightarrow[n \to \infty]{} 0
\qquad \text{in probability}.
\end{align}

\noindent Without loss of generality,
we analyse this absolute difference on the high-probability event $A_{n}(a, L_1)$ on which we have 
\begin{align*}
q_{n, \mbf{j}} \le e^{\gamma_c \beta L_1} f\ell(n)^{(1/2 - a)\beta} \qquad \forall \mbf{j} \in \mc{J}_n
\end{align*}

\noindent which is uniformly small and vanishing as $n \to \infty$.
Let us consider
\begin{align*}
\lambda_n := 
\sum_{\mbf{j} \in \mc{J}_n}\mb{E}\left[Y_{n, \mbf{j}}| \mc{F}_{\ell(n)}\right] \ind_{\{H_{n, \mbf{j}}|_{\overline{Q}} \in \mc{K}_{L_2}\}}
+ \sum_{\mbf{j} \in \mc{J}_n}\mb{E}\left[Y_{n, \mbf{j}}| \mc{F}_{\ell(n)}\right] \ind_{\{H_{n, \mbf{j}}|_{\overline{Q}} \not \in \mc{K}_{L_2}\}}
=: \lambda_{n, 1}(L_2) + \lambda_{n, 2}(L_2)
\end{align*}

\noindent where $H_{n, \mbf{j}}$ is defined in \eqref{eq:H-notation},
$L_2 > 0$ and $\mc{K}_{L_2}$ (defined in \eqref{eq:holder-compact-set}) is a compact subset of $C(\overline{Q})$ by Arzel\`a-Ascoli.
Using the first inequality in \eqref{eq:conditional-mean-Y} again, we see that 
\begin{align*}
\lambda_{n, 2} \le C f^{\alpha} \widetilde{\mc{E}}_n(L_2, 2\gamma_c).
\end{align*}

\noindent where $\widetilde{\mc{E}}_n(L, p)$ is defined in \eqref{eq:marked-error}.
Meanwhile, for any $\eps > 0$ and $n$ sufficiently large,
Lemma \ref{lem:moment-criticalGMC-compact} implies that
\begin{align*}
\frac{\mb{E}\left[Y_{n, \mbf{j}} \big| \mc{F}_{\ell(n)}\right]}{q_{n, \mbf{j}}^\alpha \int_Q e^{\gamma_c H_{n, \mbf{j}}(y)}\dd y}
\in \left[\frac{(1-\eps)\Gamma(1 - \alpha)}{\sqrt{\pi d}}, \frac{(1+\eps)\Gamma(1 - \alpha)}{\sqrt{\pi d}} \right]
\end{align*}

\noindent simultaneously for all $\mbf{j}\in\mc{J}_n$ satisfying $H_{n, \mbf{j}}|_{\overline{Q}} \in \mc{K}_{L_2}$.
In particular, we obtain
\begin{align*}
\lambda_{n, 1} \le  
\frac{(1+\eps)\Gamma(1 - \alpha)}{\sqrt{\pi d}} 
\sum_{\mbf{j}\in\mc{J}_n} f^{\alpha}M_{\ell(n)}(Q_{n, \mbf{j}}) \ind_{\{H_{n, \mbf{j}}|_{\overline{Q}} \in \mc{K}_{L_2}\}}
&\le \frac{(1+\eps)\Gamma(1 - \alpha)}{\sqrt{\pi d}} M_{\ell(n)}(f^{\alpha})
\end{align*}

\noindent and similarly
\begin{align*}
\lambda_{n, 1} 
&\ge \frac{(1-\eps)\Gamma(1 - \alpha)}{\sqrt{\pi d}} \left[M_{\ell(n)}(f^{\alpha}) - f^{\alpha}\widetilde{\mc{E}}_n(L_2, 0)\right].
\end{align*}

Collecting all the estimates for $\lambda_{n, 1}$ and $\lambda_{n,2}$,
the LHS of \eqref{eq:compare-poisson-cond-mean} is upper bounded by
\begin{align*}
\eps\frac{\Gamma(1-\alpha)}{\sqrt{\pi d}} M_{\ell(n)}(f^{\alpha}) 
+ Cf^{\alpha} \left[\widetilde{\mc{E}}_n(L_2, 2\gamma_c) + \widetilde{\mc{E}}_n(L_2, 0)\right]
\end{align*}

\noindent which converges to $0$ in probability
as we send $n \to \infty$ (using the Seneta-Heyde construction of critical chaos $M_{\ell(n)} \to M$ in probability), 
then $L_2 \to \infty$ (by Corollary \ref{cor:local-fluctuation} for the vanishing of $\widetilde{\mc{E}}_n(L_2, p)$) and finally $\eps \to 0^+$.
\end{proof}

\section{The essential spectrum of critical LQG surfaces}\label{sec:essential-spectrum}

\subsection{Optimality of modulus of continuity}
Before we discuss our result about critical LQG surfaces, 
let us first explain the optimal modulus of continuity for critical GMCs below.
\begin{proof}[Proof of Theorem \ref{thm:optimal-mod-cont}]
For simplicity let us take $D = Q = (0, 1)^d$ in the proof below,
even though the general case follows by a similar argument or rescaling the domain.

For any fixed $t \ge 0$, 
the field $X_t(\cdot)$ is almost surely continuous and we have $\sup_{x \in Q} |X_t(x)| < \infty$ by uniform continuity on $\overline{Q}$.
Given the martingale decomposition of our $\star$-scale invariant field and in particular \eqref{eq:coarse-fine-decomposition},
we have 
\begin{align*}
\left(\inf_{y \in Q} W_t(y)\right) M^{(t)}(B) \le M(B) \le \left(\sup_{y \in Q} W_t(y)\right) M^{(t)}(B)
\end{align*}

\noindent for any fixed $t \ge 0$ and cube $B \subset D$.
In particular, we see for
\begin{align*}
    Z_n := \sqrt{n}\sup\left\{M(B): \text{$B \subset [0,1]^d$ and $|B| \in [2^{-d(n+1)}, 2^{-dn}]$}\right\},
\end{align*}

\noindent the event $ E:=\{\limsup_{n \to \infty} Z_n > 0\}$ is in the tail $\sigma$-algebra $\bigcap_{t > 0} \sigma(X_{t, \infty})$,
and thus $\mb{P}(E) \in \{0, 1\}$ by Kolmogorov's zero-one law.

On the other hand, consider
\begin{align}
\notag 
\widetilde{V}_{2^n}^3(Q)
= \sum_{\mbf{j} \in \mc{J}_{2^n}} \left[\sqrt{\log(2^n)}M(Q_{2^n, \mbf{j}})\right]^{3}
&\le \sqrt{\log 2} Z_n \sum_{\mbf{j} \in \mc{J}_{2^n}} \left[\sqrt{\log(2^n)}M(Q_{2^n, \mbf{j}})\right]^{2}\\
\label{eq:pre-slutsky}
&= \sqrt{\log 2} Z_n \widetilde{V}_{2^n}^2(Q).
\end{align}

\noindent By Theorem \ref{thm:critical-to-supercritical},
$\widetilde{V}_{2^n}^{\beta}(Q)$ converges in distribution to a non-trivial limit as $n \to \infty$ for every fixed $\beta > 1$.
Suppose we now assume to the contrary that $\limsup_{n \to \infty} Z_n = 0$ almost surely,
and in particular $Z_n \to 0$ in the sense of convergence in probability.
Then the upper bound in \eqref{eq:pre-slutsky} necessarily converges to $0$ in probability by Slutsky's theorem,
and this would imply $\widetilde{V}^3_{2^n}(Q)$ also vanishes in probability, which is a contradiction.
\end{proof}

The optimal modulus of continuity extends to more general log-correlated Gaussian fields:
\begin{cor}\label{cor:optimal-mod-cont}
Let $G(\cdot)$ be a centred Gaussian field on some bounded open domain $U \subset \mb{R}^d$ with covariance kernel
\begin{align*}
\mb{E}[G(x) G(y)] = -\log|x-y| + g(x,y) \qquad \forall x, y \in U
\end{align*}

\noindent where $g \in H^{s}_{\mr{loc}}(U \times U)$ for some $s > d$.
If $M_G(\dd x)$ is the critical GMC measure associated to $G$, then 
\begin{align}\label{eq:general-modulus}
\mb{P}\left(\limsup_{|B| \to 0} \sqrt{\log \frac{1}{|B|}} M_G(B) > 0\right) = 1
\end{align}

\noindent where the lim sup is taken over cubes $B$ inside any non-empty open $D$ satisfying $\overline{D} \subset U$.
\end{cor}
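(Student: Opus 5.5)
The plan is to reduce to the $\star$-scale invariant case treated in Theorem \ref{thm:optimal-mod-cont}, using the decomposition Theorem \ref{thm:field-decomposition}.

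\textbf{Step 1: couple the two fields.} Fix a non-empty open $D$ with $\overline{D} \subset U$, and pick a small cube $D'$ with $\overline{D'} \subset D$. It suffices to prove \eqref{eq:general-modulus} for cubes $B \subset D'$, since the $\limsup$ over cubes in $D$ dominates the one over cubes in $D'$. Let $X$ be a $\star$-scale invariant field. Its covariance is $-\log|x-y| + g_X(x,y)$ with $g_X$ smooth near the diagonal, and in particular $g_X \in H^{s}_{\mr{loc}}$ on a neighbourhood $U'$ of $\overline{D'}$. Apply Theorem \ref{thm:field-decomposition} on $U' \cap U$ with $C_1$ the covariance of $G$ and $C_2$ that of $X$. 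This produces, on one probability space, copies $G$ and $X$ such that $G - X =: Z$ is almost surely H\"older-continuous, hence bounded, on $\overline{D'}$.

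\textbf{Step 2: compare the chaoses.} I will argue that almost surely
\begin{align*}
M_G(\dd x) = c(x)\, e^{\gamma_c Z(x)} M_X(\dd x) \qquad \text{on } D',
\end{align*}
where $c(x)$ is a deterministic continuous positive factor from the variance renormalisation, $e^{-\frac{\gamma_c^2}{2}(\mb{E}[G(x)^2] - \mb{E}[X(x)^2])}$ in the regularised sense. This is the standard locality and robustness of critical GMC under addition of a continuous field (see e.g. \cite{Pow2021}). The identity holds since the critical chaos is a measurable function of the field, independent of the regularisation, and the exponential of a continuous perturbation factors out, possibly with an adjusted deterministic density. Since $Z$ is continuous on the compact $\overline{D'}$, we obtain $M_G(B) \ge \kappa\, M_X(B)$ for all cubes $B \subset D'$, with $\kappa := \inf_{\overline{D'}} c\, e^{\gamma_c Z} > 0$ a random variable that is almost surely positive.

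\textbf{Step 3: conclude.} By Theorem \ref{thm:optimal-mod-cont} applied to $D'$, almost surely $\limsup_{|B|\to0} \sqrt{\log(1/|B|)}\, M_X(B) > 0$. Multiplying by $\kappa > 0$ gives the same statement for $M_G$ on $D'$, and hence on $D$. The event is a property of the law of $M_G$ alone, so it holds for any realisation of the field $G$.

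The main obstacle is Step 2. One must justify that the critical chaos of $G$, defined through whatever regularisation, coincides with the Radon--Nikodym reweighting of $M_X$ by $e^{\gamma_c Z}$, given that $Z$ is not independent of $X$. I would first try to invoke a known universality or locality statement for critical GMC. If none applies directly, I would regularise both $G$ and $X$ by the same mollifier and check that $Z_\eps \to Z$ uniformly on $\overline{D'}$, using H\"older continuity. Then $\sqrt{\log(1/\eps)}\, e^{\gamma_c G_\eps - \frac{\gamma_c^2}{2}\mb{E}[G_\eps^2]}$ differs from the $X$-version by the factor $e^{\gamma_c Z_\eps}$ times a deterministic ratio converging uniformly. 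Passing to the limit in probability then yields the identity.
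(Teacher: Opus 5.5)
Your proposal is correct and follows the paper's proof. You couple $G$ with a $\star$-scale invariant field $X$ via Theorem \ref{thm:field-decomposition} so that $G-X$ is H\"older on $\overline{D}$, bound $M_G(B)$ below by $\exp\bigl(\min_{\overline{D}}[\gamma_c(G-X)+\tfrac{\gamma_c^2}{2}g_\Delta(x,x)]\bigr)M(B)$ (your $\kappa$), and then invoke Theorem \ref{thm:optimal-mod-cont}. The paper simply asserts the reweighting identity of your Step 2, so your mollification sketch adds detail that the paper leaves implicit.
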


\begin{proof}
Take any $\star$-scale invariant field $X$ with covariance kernel satisfying Definition \ref{def:star-scale-invariant}, 
and let $M$ be the associated critical GMC measure.
By the regularity assumptions on $g$ and $\rho$,
it is easy to check that
\begin{align*}
g_{\Delta}(x, y):= \mb{E}\left[X(x)X (y)\right] - \mb{E}\left[G(x)G(y)\right] \in H_{\mr{loc}}^{d + \eps}(U \times U) \qquad \text{for some $\eps > 0$}.
\end{align*}

Thus it follows from Theorem \ref{thm:field-decomposition} that
the fields $X$ and $G$ could be coupled in a way such that $X-G$ is H\"older continuous on $\overline{D} \subset U$.
In particular,
\begin{align*}
M_G(B) \ge \exp\left(\min_{x \in \overline{D} } \left[\gamma_c (G-X)(x) + \frac{\gamma_c^2}{2} g_{\Delta}(x,x)\right]\right) M(B)
\end{align*}

\noindent and therefore our desired claim follows from the analogous result for $M$ in Theorem \ref{thm:optimal-mod-cont}.
\end{proof}

\begin{rem}
The lim sup in \eqref{eq:general-modulus} can be taken over all balls in the domain instead,
an equivalent formulation that we shall take for granted below.
\end{rem}

\subsection{Non-compactness of critical LQG resolvent}
We now explain:
\begin{proof}[Proof of Theorem \ref{thm:cLQG-spec}]
Recall the notation in Section \ref{sssec:spectral-lqg} 
(but replace $\mu_{\gamma}$ with the critical LQG measure $M_{\mr{LQG}}$ in the definition of the Dirichlet form constructed in \cite{RV2015}).
It is a standard result in functional analysis (see e.g. \cite[Exercise 4.2]{Dav1995})
that the non-negative self-adjoint operator $-\tfrac{1}{2}\Delta_{\gamma_c}$ has compact resolvent if and only if 
the embedding of the Dirichlet form $\left(\mc{D}, \|\cdot\|_{\mc{D}}\right) \hookrightarrow L^2(M_{\mr{LQG}})$ is compact.
We shall show that the latter condition is false by constructing a sequence of test functions $\varphi_n$ 
that are bounded with respect to the form norm $\| \cdot \|_{\mc{D}}$.

By translating and rescaling the domain $D$,
we may assume without loss of generality that $[-3,3]^2 \subset D$.
Since the Dirichlet Green's function restricted to $(-2,2)^2$ satisfies the regularity condition in Corollary \ref{cor:optimal-mod-cont},
we have
\begin{align}
\mb{P}\left(\limsup_{|B| \to 0} \sqrt{\log \frac{1}{|B|}} M_{\mr{LQG}}(B) > 0\right) = 1
\end{align}

\noindent where the lim sup may be taken over all balls $B$ inside any non-empty bounded open $U \subset Q = (0,1)^2$.
In particular, we have 
\begin{align}\label{eq:LQG-rescaled-blowup}
\sup\left\{ M_{\mr{LQG}}(B) \log \left(|B|^{-1}\right): \text{$B$ is an open ball with $\overline{B} \subset U$}\right\} = \infty \quad a.s.
\end{align}

To proceed, choose a sequence of pairwise disjoint cubes $B_n = \{x \in \mb{R}^2: \|x - x_n\|_\infty < r_n\} \subset Q$:
such a sequence can be constructed by selecting pairwise disjoint sub-cubes at successive levels of the dyadic decomposition of $Q$.
For each $n \ge 1$, let
\begin{align*}
B_{n, 1} := \{x \in \mb{R}^2: \|x - x_n\|_\infty < r_n / 2\},
\qquad B_{n, 2}:= \{x \in \mb{R}^2: \|x - x_n\|_\infty < r_n / 4\}.
\end{align*}

Let $C_n \in (0, \infty)$ be the constant in Lemma \ref{lem:key-capacity-estimate}
with $D_1, D_2, D_3$ corresponding to $B_{n,2}, B_{n, 1}, B_n$ respectively.
Combining this lemma with \eqref{eq:LQG-rescaled-blowup},
we choose for each $n \ge 1$ a sufficiently small ball $B_{n, 3}$ satisfying $\overline{B_{n, 3}} \subset B_{n, 2}$
and a function $\eta_n \in C_c^\infty(B_{n, 1})$ with the properties
\begin{align*}
0 \le \eta_n \le 1,
\qquad \eta_n|_{B_{n, 3}} \equiv 1,
\end{align*}

\noindent such that
\begin{align*}
\frac{1}{2}\int |\nabla \eta_{n}|^2 dx \le \frac{C_n}{\log\left(1+|B_{n,3}|^{-1}\right)}
\qquad \text{and} \qquad 
M_{\mr{LQG}}(B_{n, 3}) \log\left(1+|B_{n, 3}|^{-1}\right) \ge n C_n.
\end{align*}

\noindent If we set $\varphi_n := \eta_n / \|\eta_{n}\|_{L^2(M_{\mr{LQG}})}$,
then  
\begin{align*}
\|\eta_{n}\|_{L^2(M_{\mr{LQG}})}^2 = \int_D \eta_n^2 dM_{\mr{LQG}} \ge M_{\mr{LQG}}(B_{n,3})
\quad \Rightarrow \quad 
\mc{E}(\varphi_n, \varphi_n) 
= \frac{1}{\|\eta_{n}\|_{L^2(M_{\mr{LQG}})}^2} \mc{E}(\eta_n, \eta_n) 
\le \frac{1}{n} 
\end{align*}

\noindent and hence $\|\varphi_n\|_{\mc{D}}^2 = \mc{E}(\varphi_n, \varphi_n) + \|\varphi_n\|_{L^2(M_{\mr{LQG}})}^2 \le 2$.
This means that $(\varphi_n)_{n \ge 1}$ is bounded with respect to $\|\cdot\|_{\mc{D}}$ 
and at the same time forms an orthonormal sequence in $L^2(M_{\mr{LQG}})$,
and thus it cannot have any strongly convergent subsequence,
i.e. the embedding $\left(\mc{D}, \|\cdot\|_{\mc{D}}\right) \hookrightarrow L^2(M_{\mr{LQG}})$ is not compact almost surely.

To conclude our proof, 
let us recall the variational perspective  (see e.g. \cite[Theorem 4.5.3]{Dav1995}) and consider
\begin{align*}
\lambda_m
& := \inf_{\substack{\mc{S} \subset \mc{D} \\ \mr{dim}(\mc{S}) = m}} \sup_{\substack{f \in \mc{S}\\ \|f\|_{L^2(M_{\mr{LQG}})} = 1}} \mc{E}(f, f).
\end{align*}

\noindent By \eqref{eq:LQG-rescaled-blowup} and Theorem \ref{thm:equivalence-poincare-capacity},
we see that almost surely, 
for each $n \ge 1$ there exists a sequence of functions $u_{n, k} \in C_c^\infty(B_n)$
such that $\|u_{n, k}\|_{L^2(M_{\mr{LQG}})} = 1$ and $\mc{E}(u_{n, k}, u_{n, k}) \to 0$ as $k \to \infty$.
In particular, $\lambda_m = 0$ for all $m \ge 1$ almost surely,
and we conclude by \cite[Theorem 4.5.2]{Dav1995} that  $0 \in \sigma_{\mr{ess}}(-\tfrac{1}{2}\Delta_{\gamma_c})$ almost surely.
\end{proof}

\section{Quantitative Fourier decay of critical GMCs}\label{sec:Fourier}
The goal of Section \ref{sec:Fourier} is to establish the following key moment estimate.
\begin{lem}\label{lem:fourier-moment-bound}
For any finite non-empty $\Lambda \subset \mb{Z}^d$, set
\begin{align}\label{eq:fourier-moment-bound-variables}
\widehat{M}_{\Lambda}:= \max_{\mbf{n} \in \Lambda}|\widehat{M}(\mbf{n})|,
\qquad  \text{and} \qquad  
N_{\Lambda} := \min_{\mbf{n} \in \Lambda} \log \|\mbf{n}\|_\infty.
\end{align}

\noindent
Then for any $\beta \in (1, 2]$, $\alpha:= 1/\beta \in [\tfrac{1}{2}, 1)$ and $r \in (0, 1)$,
there exists $C = C(\beta, r) \in (0, \infty)$ such that
\begin{align}\label{eq:key-moment-estimate}
\mb{E}\left[\widehat{M}_{\Lambda}^r\right]
\le C \ell(e|\Lambda|)^{r(1-\alpha)}  N_{\Lambda}^{-r/2}
\end{align}

\noindent uniformly for all finite $\Lambda \subset \mb{Z}^d$ with sufficiently large $N_{\Lambda}$.
\end{lem}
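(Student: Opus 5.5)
The plan is to pick a scale $t$ with $e^{t} \approx \min_{\mbf{n}\in\Lambda}\|\mbf{n}\|_\infty^{\theta}$ for a small fixed $\theta\in(0,1)$, so that $t \asymp N_\Lambda$. We partition $Q$ into cubes $Q_{m,\mbf{j}}$ of side $1/m$ with $m \approx e^{t}$, and use the decomposition \eqref{eq:coarse-fine-decomposition}, $M = W_t M^{(t)}$. Write
\begin{align*}
\widehat{M}(\mbf{n}) = \sum_{\mbf{j} \in \mc{J}_m} \int_{Q_{m,\mbf{j}}} e^{2\pi i \mbf{n}\cdot x} W_t(x) M^{(t)}(\dd x).
\end{align*}
The local randomisation trick (cf.\ Remark \ref{rem:independence}) runs as follows. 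Since $M^{(t)}$ is stationary, independent of $\mc{F}_t$, and has range of dependence $\asymp e^{-t}$, we may replace the restriction of $M^{(t)}$ to each cell by a copy shifted by an independent random vector $U_{\mbf{j}}$, uniform on a box of side $\asymp 1/m$, with the cells of each colour class in a $2^d$-colouring arranged so that distinct cells stay far apart. Equivalently, we compare $M$ with a measure $\widetilde M$ having the same law up to boundary errors. The shift multiplies the $\mbf{j}$-th contribution by the phase $e^{2\pi i \mbf{n}\cdot U_{\mbf{j}}}$. The mismatch between $W_t(x)$ and $W_t(x+U_{\mbf{j}})$ and the mass lost near the cell boundaries are controlled by the H\"older regularity of $X_t$ at scale $e^{-t}$ (Lemma \ref{lem:lgf-oscillations}) and by boundary-layer estimates in the spirit of Corollary \ref{cor:boundary-momentGMC}. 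This is the error term of Lemma \ref{lem:error-random-shift}, whose fractional moment we expect to be $O(t^{-r/2})$ or smaller.

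After the shift, $\widehat{M}(\mbf{n}) = \sum_{\mbf{j}} e^{2\pi i \mbf{n}\cdot U_{\mbf{j}}}\mc{A}_{\mbf{j}}(\mbf{n})$, where $\mc{A}_{\mbf{j}}(\mbf{n})$ is measurable with respect to everything except the $U$'s and satisfies $|\mc{A}_{\mbf{j}}(\mbf{n})|\le M(Q_{m,\mbf{j}})$ up to harmless constants. We split $e^{2\pi i \mbf{n}\cdot U_{\mbf{j}}} = \xi_{\mbf{j}}(\mbf{n}) + \mb{E}[e^{2\pi i \mbf{n}\cdot U_{\mbf{j}}}]$. The mean part is $|\widehat{\mathrm{Unif}}(\mbf{n})|\cdot M(Q)$, and the characteristic function of a uniform law on a box of side $1/m$ decays like $(\|\mbf{n}\|_\infty/m)^{-1}$. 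For suitable $\mbf{n}$ this is polynomially small in $\|\mbf{n}\|_\infty$, which beats $N_\Lambda^{-1/2}$. To handle every direction $\mbf{n}$, we would smooth the uniform law, or use a product of uniforms in each coordinate together with the fact that $\|\mbf{n}\|_\infty$ is large in at least one coordinate.

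For the fluctuating part we apply Corollary \ref{cor:high-moment-estimate} conditionally on everything but the $U$'s, taking $m_{\mbf{j}} = M(Q_{m,\mbf{j}})$. This gives the conditional bound
\begin{align*}
C\,\ell(e|\Lambda|)^{r(1-\alpha)}\Big(\sum_{\mbf{j}} M(Q_{m,\mbf{j}})^\beta\Big)^{r/\beta} = C\,\ell(e|\Lambda|)^{r(1-\alpha)}\, \ell(m)^{-r/2}\,\widetilde V_m^\beta(Q)^{r/\beta}.
\end{align*}
We then take expectations. Since $r/\beta < 1/\beta$, Corollary \ref{cor:uniform-integrability} bounds $\sup_m \mb{E}[\widetilde V_m^\beta(Q)^{r/\beta}]$. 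Finally $\ell(m)\asymp t \asymp N_\Lambda$, which yields \eqref{eq:key-moment-estimate}, with subadditivity of $x\mapsto x^r$ used to combine the three pieces.

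The main obstacle is making the randomisation rigorous. We must produce a coupling in which $M$ equals, or is comparable in law to, a sum of independently shifted cell contributions, with an error whose $r$-th moment is $O(N_\Lambda^{-r/2})$ uniformly in $\Lambda$. The delicate points are the finite-range dependence across neighbouring cells, the boundary mass of critical chaos (only a weak power $h^{1-d(1-q)^2}$ is available), and the variation of $W_t$ over a shift. Our first attempt would be to use a buffer-free colouring of the cells and shifts of size $\delta/m$ with $\delta$ a small power of $t^{-1}$, so that the boundary and $W_t$-oscillation errors are polynomially small in $t$.
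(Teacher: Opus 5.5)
Your outline has the same architecture as the paper: the split $M=W_tM^{(t)}$, colour classes giving conditional independence given $\mc F_t$, i.i.d.\ product-uniform shifts, centred phases with $\mc F_\infty$-measurable amplitudes plus a mean term plus a shift error, and then Corollary \ref{cor:high-moment-estimate} with Corollary \ref{cor:uniform-integrability}. That part is fine. The gap is in the step you defer as ``the main obstacle'': neither regime you propose closes it.

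\begin{itemize}
\item \textbf{Shift box of side $\asymp 1/m$.} A constant fraction of each cell's $M^{(t)}$-mass crosses the cell boundary, and $W_t$ changes by an $O(1)$ factor over distance $e^{-t}$. The shift error is then as large as $\widehat M(\mbf n)$ itself.
\item \textbf{Fallback $\delta=t^{-c}$ with $c$ small, and sharp cells.} This fails on three counts.
  \begin{itemize}
  \item The $W_t$-mismatch, controlled as in Lemma \ref{lem:control-coarse-field}, is of order $\sqrt{\delta t}$ (at best $\delta\sqrt t$ using smoothness of $\rho$), which does not vanish.
  \item The $\delta/m$-neighbourhood of all interior cell faces cannot be handled by the covering argument of Corollary \ref{cor:boundary-momentGMC}. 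That argument gives only $m^{d(1-r)^2}\delta^{1-d(1-r)^2}\to\infty$.
  \item Even errors of size $t^{-cr}$ with $c<1/2$ would not beat the target $N_\Lambda^{-r/2}$.
  \end{itemize}
\item \textbf{Identity in law must be class by class.} Shifting all cells at once does not give a measure with the law of $M$ ``up to boundary errors''. Adjacent cells are correlated through $M^{(t)}$ across a layer of width comparable to the cell. You must first write $\widehat M=\sum_{\mbf c}S_{\mbf c}$, use subadditivity of $x\mapsto x^r$, and apply the identity in law to each class separately.
\end{itemize}

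The missing idea is to make the shift \emph{exponentially} small relative to the block scale. Phase randomisation only needs $\|\mbf n\|_\infty h\ge 1$. The paper does not make the mean term small. It only uses $|m_\Lambda(\mbf n)|\le 1/(2\pi)$ and absorbs $(2\pi)^{-r}\mb E[\max_{\mbf n}|S_{k,\mbf c}(\mbf n)|^r\ind_{\mc G_\Lambda}]$ into the left-hand side. Your own route of making $m(\mbf n)$ small would equally allow $\delta$ exponentially small in $t$, since $\min_{\mbf n\in\Lambda}\|\mbf n\|_\infty\approx e^{t/\theta}$.

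Concretely, the paper takes $k=\lfloor N_\Lambda/2\rfloor$ and $h=2^{-N_\Lambda}\asymp b_k^2$. It replaces sharp cells by the Lipschitz partition of unity $\psi_{k,z}$, with classes mod $5$. As a result:
\begin{itemize}
\item interior boundaries cost only $(h/b_k)M(Q)$;
\item the only genuine boundary layer is $\partial_hQ$;
\item all three terms of Lemma \ref{lem:error-random-shift} are exponentially small in $N_\Lambda$, by Corollary \ref{cor:boundary-momentGMC} and Lemma \ref{lem:control-coarse-field}, off an event of probability $O(e^{-N_\Lambda/C})$.
\end{itemize}
If you prefer to keep sharp cells, take $\delta$ a negative power of $m$. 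The covering bound for the interior face layer then also becomes exponentially small for $r$ close to $1$.
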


Let us postpone the proof of the above result, and instead explain how this implies our quantitative Fourier decay.

\begin{proof}[Proof of Theorem \ref{thm:fourier-decay}, assuming Lemma \ref{lem:fourier-moment-bound}]
For each $m \ge 1$, let $H_m := 2^m$
and $\Lambda_m := \{\mbf{n} \in \mb{Z}^d: 2^{H_m} \le \|\mbf{n}\|_{\infty} < 2^{H_{m+1}}\}$ so that $\ell(e|\Lambda_m|) \lesssim_d H_m$ and $N_{\Lambda_m} \asymp H_m$ as $m \to \infty$.
We also fix some $r \in (0,1)$, 
and for any given $\eps > 0$ choose $\beta > 1$ sufficiently close to $1$ such that $\alpha - 1 + \eps > 0$. 
Then by Markov's inequality, we have 
\begin{align*}
\mb{P}\left( \max_{\mbf{n} \in \Lambda_m} \left(\ell(\|\mbf{n}\|_\infty)^{1/2-\eps} |\widehat{M}(\mbf{n})|\right) > u \right)
&\le u^{-r}H_m^{r(1/2-\eps)}\mb{E}\left[\widehat{M}_{\Lambda_m}^r\right]
\lesssim u^{-r}H_m^{-r(\alpha - 1+\eps)}
\end{align*}

\noindent by Lemma \ref{lem:fourier-moment-bound}, and this upper bound is summable over $m$.
By Borel-Cantelli, we deduce that
\begin{align*}
\mb{P}\left( \max_{\mbf{n} \in \Lambda_m} \left(\ell(\|\mbf{n}\|_\infty)^{1/2-\eps} |\widehat{M}(\mbf{n})|\right) > u \quad \mr{i.o.}\right) = 0 \qquad \forall u > 0
\end{align*}

\noindent and hence $\lim_{m \to \infty} \max_{\mbf{n} \in \Lambda_m}\ell(\|\mbf{n}\|_\infty)^{1/2-\eps} |\widehat{M}(\mbf{n})|= 0$ almost surely.

As for the tightness result, 
simply choose the singleton set $\Lambda := \{\mbf{n}\}$ and apply Markov's inequality with Lemma \ref{lem:fourier-moment-bound} in a similar way to conclude the proof.
\end{proof}
\subsection{Overview of strategy}
We now outline the proof of Lemma \ref{lem:fourier-moment-bound},
which ultimately relies on our Corollary \ref{cor:uniform-integrability}.
The high-level idea is that we split the Fourier integral into many different pieces which we call `local blocks'
and show how the absolute moments of their sum (i.e. the original Fourier coefficient) can be controlled by those of $\beta$-variations,
thanks to a local randomisation trick that introduces independent Fourier phases with a controlled error.

In order to explain this strategy more precisely, we need to introduce a bit more notation.
In the following, we will write $b_k := 2^{-k}$ and $t_k := k \log 2$. 
We commence with a Lipschitz partition of unity:
let $\eta: \mb{R} \to \mb{R}$ be the cutoff function $\eta(t) := \left(1 - |t|\right)_+$,
and for each $k \ge 1$ and $z \in \mb{Z}^d$ define 
\begin{align*}
\psi_{k, z}(x) := \prod_{r = 1}^d \eta \left(\frac{x_r}{b_k} - z_r\right), \qquad x \in \mb{R}^d.
\end{align*}

\noindent It is straightforward to see that these functions satisfy the following properties:
\begin{itemize}
\item $0 \le \psi_{k, z}(x) \le 1$ everywhere and $\sum_{z \in \mb{Z}^d} \psi_{k, z}(x) = 1$.
\item $\mr{supp}(\psi_{k, z}) \subset b_k \left(z + [-1, 1]^d\right)$,
and thus for each $k \ge 1$, there exists a finite set $\mc{I}_k \subset \mb{Z}^d$ 
such that $\psi_{k, z}|_{Q}$ is a non-trivial function if and only if $z \in \mc{I}_k$.
\item There exists some $C \in (0, \infty)$ independent of $k$ and $z$ such that $\|\nabla \psi_{k, z}\|_{\infty} \le C/ b_k$.
\end{itemize}

\noindent Then our Fourier coefficients can be decomposed as
\begin{align*}
\widehat{M}(\mbf{n}) = \sum_{z \in \mc{I}_k} 
\int_{Q}  \psi_{k, z}(x) e^{2\pi i \mbf{n} \cdot x} M(\dd x)
=: \sum_{z \in \mc{I}_k} M_{k, z}^{\psi}(\mbf{n})
\end{align*}

\noindent and we refer to $M_{k, z}^{\psi}(\mbf{n})$ as the local blocks.

Next we introduce the shift operator $\tau_a$: for any measure $\mu$ and test function $\varphi$, we have
\begin{align*}
\left(\tau_a \mu\right)(A) := \mu(a + A),
\qquad \int \varphi(x) \left(\tau_a \mu\right)(\dd x) := \int \varphi(x-a) \mu(\dd x).
\end{align*}

\noindent Recalling $M(\dd x) = W_s(x) M^{(s)}(\dd x)$ for any $s \ge 0$,
we now define the shifted local blocks, for each $u \in \mb{R}^d$, by
\begin{align}
\notag 
M_{k, z, u}^{\psi}(\mbf{n}) 
&= \int_{Q} \psi_{k, z}(x)  e^{2\pi i \mbf{n} \cdot x} W_{t_k}(x) \left(\tau_u M^{(t_k)}\right)(\dd x)
=: e^{-2\pi i \mbf{n} \cdot u}\left[ M_{k, z}^{\psi}(\mbf{n}) +  M^{(t_k)}(\Delta_{k, z, u}e_{\mbf{n}})\right]
\end{align}

\noindent where $e_{\mbf{n}}(x) := \exp\left(2 \pi i \mbf{n} \cdot x\right)$ and
\begin{align}
\Delta_{k, z, u}(x) := 
\ind_{Q}(x-u) \psi_{k, z}(x-u) W_{t_k}(x-u) 
- \ind_{Q}(x) \psi_{k, z}(x) W_{t_k}(x).
\end{align}

To randomise our shift,
consider a collection of i.i.d. $U_z \sim \mr{Uniform}([-h, h]^d)$ random variables that are independent of $\mc{F}_\infty$ 
(and in particular the underlying field $X$; the choice of $h > 0$ will be specified later), and write 
\begin{align*}
\widetilde{M}_{k, z}^{\psi}(\mbf{n}) := M_{k, z, U_z}^{\psi}(\mbf{n})
\qquad \text{and} \qquad E_{k, z}(\mbf{n}) := M^{(t_k)}(\Delta_{k, z, U_z}e_{\mbf{n}}).
\end{align*}

\noindent Note that by considering the support property it is straightforward to check that 
\begin{align}\label{eq:sum-support-psi}
\sum_{z \in \mb{Z}^d} \psi_{k, z}(x-U_z) \le 9^d \qquad \text{if} \qquad \sup_{z \in \mb{Z}^d} \|U_z\|_{\infty} \le b_k
\end{align}

\noindent which is satisfied if $h \le b_k$, a condition that will be assumed or satisfied for the rest of Section \ref{sec:Fourier}.

As in Section \ref{sec:analysis-beta-variation},
we assume for simplicity that $R = 1$ for the support condition on the seed kernel $\rho$ in Definition \ref{def:star-scale-invariant}
even though the adaptation to the general case is straightforward.
The collection of `relevant indices' $\mc{I}_k$ for the partition of unity is now decomposed into disjoint modulo classes
\begin{align*}
\mc{I}_k:= \bigcup_{\mbf{c} \in \{0, 1, 2, 3, 4\}^d} \mc{I}_k(\mbf{c})
\qquad \text{where} \qquad 
\mc{I}_k(\mbf{c}) := \{z \in \mc{I}_k: z_r \equiv c_r ~\text{(mod $5$)} \quad \forall r \le d\}
\end{align*}

\noindent such that for any $\mbf{c} \in \{0, 1, 2, 3, 4\}^d$ and conditional on $\mc{F}_{t_k}$: 
\begin{itemize}
\item $\left(M_{k, z}^{\psi}(\cdot)\right)_{z \in \mc{I}_k(\mbf{c})}$ is a collection of independent random variables
(since any pair correspond to two effective integration regions separated by a distance at least $2 e^{-t_k} > \frac{1}{2} e^{-t_k}$); and 
\item we have
\begin{align*}
\left(\widetilde{M}_{k, z}^{\psi}(\mbf{n})\right)_{z \in \mc{I}_k(\mbf{c}), \mbf{n} \in \mb{Z}^d} 
\overset{(d)}{=} 
\left(M_{k, z}^{\psi}(\mbf{n})\right)_{z \in \mc{I}_k(\mbf{c}), \mbf{n} \in \mb{Z}^d}.
\end{align*}

\noindent by the translation invariance of $M^{(t_k)}$ (since $h \le b_k$ so that the random shift does not break conditional independence).
\end{itemize}

The purpose of these local random shifts is to expose Fourier cancellation without the need to replace the limiting chaos by a regularised approximation:
this is one distinction between our approach and previous works in the literature.
We choose the shift scale $h$ much smaller than the block scale $b_k$ but large enough for the Fourier phases to oscillate.
By performing the analysis on each modulo class separately,
we can relate the shifted local blocks back to the original ones up to boundary errors
and eventually reduce our problem back to the integrability of power variations of the underlying critical GMC measure.

\subsection{Error arising from random shifts}
Let us begin with the following simple deterministic observation.
\begin{lem}\label{lem:error-random-shift}
Suppose $h \in [0, b_k]$.
There exists some deterministic constant $C \in (0, \infty)$ depending only on $d$ 
such that $\sum_{z \in \mc{I}_k} |E_{k, z}(\mbf{n})| \le C \mc{E}_{k, h}$ uniformly in $\mbf{n} \in \mb{Z}^d$ with
\begin{align}\label{eq:error-shift}
\mc{E}_{k, h} 
:= e^{\omega_k(2h)} M(\partial_{h}Q)
+ \left(\frac{h}{b_k} +  e^{\omega_k(2h)} - 1\right) M(Q)
\end{align}

\noindent where 
\begin{align*}
\partial_{h}Q &:= \{x \in [-2, 2]^d: \|x -  \partial Q\|_\infty \le h\},\\
\qquad Q^{+h} &:= \{x \in [-2, 2]^d: \|x - Q\|_{\infty} \le h\},\\
\text{and} \qquad
\omega_k(h) &:= \sup \left\{|\log W_{t_k}(x) - \log W_{t_k}(y)|: x, y \in Q^{+h}, \|x - y\|_\infty \le h  \right\}.
\end{align*}

\end{lem}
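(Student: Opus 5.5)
We bound each $|E_{k,z}(\mbf{n})|$ pathwise. Since $|e_{\mbf{n}}| = 1$ and $M^{(t_k)}$ is a positive measure, we have $|E_{k,z}(\mbf{n})| \le M^{(t_k)}(|\Delta_{k,z,U_z}|)$ uniformly in $\mbf{n}$. The plan is to split $\Delta_{k,z,u}$ into three differences, each changing one factor at a time:
\begin{align*}
\Delta_{k,z,u}(x)
&= \left[\ind_Q(x-u) - \ind_Q(x)\right]\psi_{k,z}(x-u)W_{t_k}(x-u)
+ \ind_Q(x)\left[\psi_{k,z}(x-u) - \psi_{k,z}(x)\right]W_{t_k}(x-u)\\
&\qquad + \ind_Q(x)\psi_{k,z}(x)\left[W_{t_k}(x-u) - W_{t_k}(x)\right].
\end{align*}

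\noindent Each term is then converted back into an integral against $M(\dd x) = W_{t_k}(x)M^{(t_k)}(\dd x)$. The basic input is that $W_{t_k}(x-u) \le e^{\omega_k(2h)}W_{t_k}(x)$ whenever $\|u\|_\infty \le h$ and $x, x-u$ lie in $Q^{+h}$. Here $\omega_k(h) \le \omega_k(2h)$ is monotone in $h$, and the slack $2h$ covers the fact that points near $\partial Q$ may leave $Q$ by up to $h$.

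\emph{First term.} The difference of indicators is supported on the symmetric difference $Q \triangle (Q+u)$, which is contained in $\partial_h Q$. Bounding $W_{t_k}(x-u) \le e^{\omega_k(2h)}W_{t_k}(x)$ there gives a contribution at most $e^{\omega_k(2h)}\int \ind_{\partial_hQ}(x)\psi_{k,z}(x-U_z)\,M(\dd x)$. Summing over $z$ and using \eqref{eq:sum-support-psi}, which requires $h \le b_k$, yields at most $9^d e^{\omega_k(2h)}M(\partial_hQ)$.

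\emph{Second term.} $\psi_{k,z}$ is Lipschitz with $\|\nabla\psi_{k,z}\|_\infty \lesssim 1/b_k$, so $|\psi_{k,z}(x-u) - \psi_{k,z}(x)| \lesssim (h/b_k)\,\ind\{x \in b_k(z + [-2,2]^d)\}$. For $x \in Q$, replacing $W_{t_k}(x-u)$ by $e^{\omega_k(2h)}W_{t_k}(x)$ and summing over $z$ (each $x$ lies in at most $5^d$ enlarged supports) gives $\lesssim (h/b_k)e^{\omega_k(2h)}M(Q)$. The factor $e^{\omega_k(2h)}$ here is harmless: $(h/b_k)e^{\omega_k(2h)} = (h/b_k) + (h/b_k)(e^{\omega_k(2h)}-1) \le (h/b_k) + (e^{\omega_k(2h)}-1)$ since $h/b_k \le 1$. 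This is absorbed into the form \eqref{eq:error-shift}.

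\emph{Third term.} For $x \in Q$ we have $|W_{t_k}(x-u) - W_{t_k}(x)| = W_{t_k}(x)\,|e^{\log W_{t_k}(x-u)-\log W_{t_k}(x)} - 1| \le (e^{\omega_k(2h)}-1)W_{t_k}(x)$, using $|e^y - 1| \le e^{|y|}-1$. Summing over $z$ with $\sum_z\psi_{k,z} = 1$ gives $(e^{\omega_k(2h)}-1)M(Q)$.

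Adding the three contributions gives the claim with $C$ depending only on $d$ (through the overlap constants $9^d$ and $5^d$ and the Lipschitz constant). The only delicate point is the bookkeeping in the first term: checking that all relevant points, namely $x$ and $x-u$ with $x \in Q \cup (Q+u)$, lie in $Q^{+h}$ within distance $h \le 2h$, so that $\omega_k(2h)$ applies. Everything else is routine; in particular no probabilistic input is needed, since the bound is deterministic given the field.
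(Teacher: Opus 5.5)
Your proposal is correct and is essentially the paper's proof. It uses the same three-term telescoping of $\Delta_{k,z,U_z}$: the indicator term is controlled on $\partial_h Q$ via \eqref{eq:sum-support-psi}, and the other two terms via the Lipschitz bound on $\psi_{k,z}$ and $|W_{t_k}(x-u)-W_{t_k}(x)| \le (e^{\omega_k(2h)}-1)W_{t_k}(x)$. The only difference is the order of the telescoping: you swap $\psi_{k,z}$ before $W_{t_k}$, whereas the paper does the reverse and applies \eqref{eq:sum-support-psi} to the $W$-difference term. Your ordering leaves an extra factor $e^{\omega_k(2h)}$ on the Lipschitz term, which you correctly absorb using $h/b_k \le 1$.
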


\begin{proof}
Let us start by the telescoping bound
\begin{align}\label{eq:Delta-telescope}
\begin{split}
&|\Delta_{k, z,u}(x)|
\le 
\psi_{k, z}(x-u) W_{t_k}(x-u)  |\ind_{Q}(x-u) - \ind_{Q}(x)|\\
& \quad 
+ \ind_{Q}(x) \psi_{k, z}(x - u)|W_{t_k}(x-u) -  W_{t_k}(x)|
+ \ind_{Q}(x)  W_{t_k}(x)  | \psi_{k, z}(x-u) - \psi_{k, z}(x)|.
\end{split}
\end{align}

For the first term on the RHS, we have
\begin{align*}
\sum_{z \in \mb{Z}^d}&
\int_{\mb{R}^d} \psi_{k, z}(x-U_z) W_{t_k}(x-U_z)  |\ind_{Q}(x-U_z) - \ind_{Q}(x)| M^{(t_k)}(\dd x)\\
& \qquad \le e^{\omega_k(2h)} \int_{\partial_h Q} \left[ \sum_{z \in \mb{Z}^d} \psi_{k, z}(x-U_z)\right] M(\dd x)
\le 9^d e^{\omega_k(2h)} M(\partial_h Q)
\end{align*}

\noindent where the last step follows from \eqref{eq:sum-support-psi}.
Similarly, the second term is responsible for the error
\begin{align*}
&\sum_{z \in \mb{Z}^d}
\int_{\mb{R}^d}  \ind_{Q}(x) \psi_{k, z}(x - U_z)|W_{t_k}(x-U_z) -  W_{t_k}(x)| M^{(t_k)}(\dd x)
\le 9^d \left(e^{\omega_k(2h)} - 1\right) M(Q).
\end{align*}

As for the third error term in \eqref{eq:Delta-telescope}, we have
\begin{align}
\notag
\sum_{z \in \mb{Z}^d}
&\int_{\mb{R}^d} \ind_{Q}(x)  W_{t_k}(x)  | \psi_{k, z}(x-U_z) - \psi_{k, z}(x)|M^{(t_k)}(\dd x)\\
\label{eq:Delta-error3}
& \qquad \qquad = \int_{Q}
\left(\sum_{z \in \mb{Z}^d}| \psi_{k, z}(x-U_z) - \psi_{k, z}(x)|\right) M(\dd x).
\end{align}

\noindent For each fixed $x$, the number of $z \in \mb{Z}^d$ such that at least one of $\psi_{k, z}(x-U_z)$ or $\psi_{k, z}(x)$ is non-zero is bounded by $9^d$.
Using the Lipschitz estimate $\sup_x|\psi_{k, z}(x) - \psi_{k, z}(x - u)| \lesssim \|u\|_{\infty} / b_k$,
we see that \eqref{eq:Delta-error3} is $\lesssim M(Q) h / b_k$.
This concludes our proof.
\end{proof}

Our next task is to control the size of $\omega_k(h)$.
\begin{lem}\label{lem:control-coarse-field}
There exists some constant $C \in (0, \infty)$ such that
\begin{align*}
\mb{P}(\omega_k(h) > C\sqrt{hk 2^k}) \le C e^{-k/C} \qquad \forall h \in \left[ 2^{-2(k+1)}, 1\right], ~k \ge 1.
\end{align*}
\end{lem}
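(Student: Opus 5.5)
Since $\log W_{t_k}(x) = \gamma_c X_{t_k}(x) - d t_k$, the deterministic drift cancels in differences, and
\begin{align*}
\omega_k(h) = \gamma_c \sup\left\{|X_{t_k}(x) - X_{t_k}(y)| : x, y \in Q^{+h}, \|x-y\|_\infty \le h\right\}.
\end{align*}
The plan is therefore to control the $h$-modulus of continuity of the smooth Gaussian field $X_{t_k}$ on the compact set $[-2,2]^d \supset Q^{+h}$. I would split the argument into an expectation bound from Dudley's entropy integral and a concentration step from Borell-TIS, both taken from Lemma \ref{lem:Gaussian-analysis}.

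\textbf{Step 1: canonical metric.} As in the proof of Lemma \ref{lem:bound-acosta}, we have $\mb{E}[|X_{t_k}(x) - X_{t_k}(y)|^2] = 2[K_{t_k}(0) - K_{t_k}(x-y)] \lesssim e^{t_k}|x-y| = 2^k |x-y|$. Using $\rho \in C_c^\infty$ one also gets the smooth bound $\lesssim e^{2t_k}|x-y|^2$, but the first bound suffices. Consider the Gaussian process $\mc{G}_{(x,y)} := X_{t_k}(x) - X_{t_k}(y)$ indexed by $\mc{T}_h := \{(x,y) \in ([-2,2]^d)^2 : \|x-y\|_\infty \le h\}$. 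Its variance is bounded by $\sigma^2 \lesssim 2^k h$, and its canonical distance satisfies $d((x,y),(x',y'))^2 \lesssim 2^k(|x-x'| + |y-y'|)$. The covering numbers therefore obey $N(\mc{T}_h, d, r) \lesssim \max\left(1, (2^k/r^2)^{2d}\right)$.

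\textbf{Step 2: Dudley.} Applying \eqref{eq:Dudley-entropy1} to $\mc{T}_h$ (rather than \eqref{eq:Dudley-entropy2}), the integrand vanishes for $r \gtrsim \sigma$. Hence
\begin{align*}
\mb{E}\left[\sup \mc{G}\right] \lesssim \int_0^{C\sqrt{2^k h}} \sqrt{\log(2^k/r^2)}\,dr \lesssim \sqrt{2^k h}\,\sqrt{\log\left(2^k/(2^k h)\right) + 1} \lesssim \sqrt{2^k h}\,\sqrt{\log(1/h)+1}.
\end{align*}
Since $h \ge 2^{-2(k+1)}$, we have $\log(1/h) \lesssim k$, so $\mb{E}[\omega_k(h)] \lesssim \sqrt{hk2^k}$. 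By symmetry of $\mc{G}$, the absolute value in the supremum costs only a factor of two.

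\textbf{Step 3: Borell-TIS.} By \eqref{eq:Borell-TIS} with $u = \sqrt{hk2^k}$,
\begin{align*}
\mb{P}\left(\sup \mc{G} > \mb{E}\sup \mc{G} + \sqrt{hk2^k}\right) \le \exp\left(-\frac{hk2^k}{2C2^k h}\right) = e^{-k/C'}.
\end{align*}
Combining this with Step 2, and doing the same for $-\mc{G}$, gives the claim after enlarging $C$.

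\textbf{Main obstacle.} The expected difficulty is the entropy computation. The canonical metric must be handled with the crude $e^{t}|x-y|$ bound uniformly in $k$, and the covering of the constrained index set $\mc{T}_h$ has to be checked so that the logarithmic factor is $\log(1/h)$ and not worse; that is exactly where the lower bound $h \ge 2^{-2(k+1)}$ enters. The separability hypothesis of Lemma \ref{lem:Gaussian-analysis} holds by continuity of $X_{t_k}$, and the sub-Gaussian increment condition holds trivially for a Gaussian process.
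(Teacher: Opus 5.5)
Your proposal is correct and follows the paper's proof. You bound $\omega_k(h)$ by the $h$-modulus of $\gamma_c X_{t_k}$ on $[-2,2]^d$, and use $\mb{E}[|X_{t_k}(x)-X_{t_k}(y)|^2]\lesssim 2^k|x-y|$ with Dudley to get an expected size $O(\sqrt{hk2^k})$; the lower bound $h \ge 2^{-2(k+1)}$ keeps the logarithm $O(k)$. You then conclude with Borell--TIS at variance scale $2^k h$, exactly as the paper does.

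The only cosmetic difference is in the Dudley step. You apply the plain entropy bound \eqref{eq:Dudley-entropy1} to the increment process indexed by pairs, which also makes the Borell--TIS step slightly cleaner. The paper instead applies the modulus-of-continuity form \eqref{eq:Dudley-entropy2} directly to $X_{t_k}$ and evaluates the entropy integral via Cauchy--Schwarz.
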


\begin{proof}
Let us consider 
\begin{align*}
\omega_k(h) 
\le \widetilde{\omega}_k(h) 
&:= \sup_{x, y \in [-2, 2]^d: \|x - y\|_\infty \le h} |\log W_{t_k}(x) - \log W_{t_k}(y)|\\
& =  \gamma_c \sup_{x, y \in [-2, 2]^d: \|x - y\|_\infty \le h} |X_{t_k}(x) - X_{t_k}(y)|
\end{align*}

\noindent and instead prove the analogous tail probability bound for $\widetilde{\omega}_k(h)$ for $h \ge 2^{-2(k+1)}$.
Based on a calculation similar to \eqref{eq:natural-distance} and the explanation right afterwards,
we see that
\begin{align*}
  \mb{E}[|X_{t_k}(x) - X_{t_k}(y)|^2] \lesssim e^{t_k}|x-y| \lesssim e^{t_k} h \qquad \text{if} \qquad \|x - y\|_\infty \le h.
\end{align*}

\noindent By Dudley's entropy bound \eqref{eq:Dudley-entropy2},
there exists some absolute constant $C_1 \in (0, \infty)$ such that 
\begin{align*}
\mb{E}\left[\widetilde{\omega}_k(h)\right]
&\le C_1 \int_0^{C_1\sqrt{e^{t_k} h}} \sqrt{\log \left(1 + \frac{1}{(r^2 e^{-t_k})^d} \right)}  dr\\
&\le C_1 \left( C_1 \sqrt{e^{t_k} h}  \int_0^{C_1\sqrt{e^{t_k} h}} \log \left(1 + \frac{1}{(r^2 e^{-t_k})^d} \right) dr\right)^{1/2}
= O(\sqrt{h k 2^k})
\end{align*}

\noindent where the second step follows from Cauchy-Schawrz.
By the Borell-TIS inequality  \eqref{eq:Borell-TIS},
there exists some constant $C_2 \in (0, \infty)$ independent of $h$ and $k$ such that
\begin{align*}
\mb{P}\left(\widetilde{\omega}_k(h) \ge \left(C_2 + u\right)\sqrt{h k 2^k}\right) 
\le C_2 \exp\left(-\frac{\left(u\sqrt{h k 2^k}\right)^2}{C_2 2^k h}\right)
\le C_2\exp\left(-\frac{u^2}{C_2}k\right) 
\qquad \forall u > 0
\end{align*}

\noindent which gives the desired estimate.
\end{proof}

\subsection{Proof of key moment estimate}
\begin{proof}[Proof of Lemma \ref{lem:fourier-moment-bound}]
By H\"older's inequality, it suffices to prove the claim for $r \in (0, 1)$ sufficiently close to $1$
so that $1 - d(1-r)^2 > 0$ for application of Corollary \ref{cor:boundary-momentGMC} at the end of the proof.

Recall the notation in \eqref{eq:fourier-moment-bound-variables}, and set
\begin{align*}
k = k_{\Lambda} := \lfloor N_{\Lambda} / 2 \rfloor
\qquad \text{and} \qquad h = h_{\Lambda} := 2^{-N_{\Lambda}}
\end{align*}

\noindent so that $h \asymp b_{k}^2$.

For each modulo class $\mbf{c} \in \{0, \dots, 4\}^d$, 
let  $S_{k, \mbf{c}}(\mbf{n}) := \sum_{z \in \mc{I}_k(\mbf{c})} M_{k, z}^{\psi}(\mbf{n})$.
Then $\widehat{M}(\mbf{n}) = \sum_{\mbf{c} \in \{0, \dots, 4\}^d} S_{k, \mbf{c}}(\mbf{n})$,
and by the concavity of $x \mapsto x^r$ we have 
\begin{align}\label{eq:pf-fourier-moment}
\mb{E}\left[\widehat{M}_{\Lambda}^r\right]
\le \mb{E}\left[\widehat{M}_{\Lambda}^r \ind_{\mc{G}_{\Lambda}^c}\right]
+ \sum_{\mbf{c}\in\{0, \dots, 4\}^d} 
\mb{E}\left[\max_{\mbf{n} \in \Lambda}\big|S_{k, \mbf{c}}(\mbf{n})\big|^r \ind_{\mc{G}_{\Lambda}}\right]
\end{align}

\noindent where $\mc{G}_{\Lambda} := \{\omega_{k}(2h) \le C\sqrt{h k2^{k}}\}$ is a $\mc{F}_{t_k}$-measurable event.
Using H\"older's inequality with $p \in (1, 1/r)$, the first term on the RHS of \eqref{eq:pf-fourier-moment} is bounded by
$\mb{E}\left[M(Q)^{rp}\right]^{1/p} \mb{P}\left(\mc{G}_{\Lambda}^c\right)^{(p-1)/p} = O\left(e^{-N_{\Lambda} / C}\right)$
by Lemma \ref{lem:control-coarse-field} and Lemma \ref{lem:SH-moment-uniform},
and this error term can thus be safely absorbed.

To proceed, we would like to find a bound on the summand in \eqref{eq:pf-fourier-moment} uniformly over the modulo class $\mbf{c} \in \{0, \dots, 4\}^d$.
Let us recall that our i.i.d. random shifts satisfy $U_z \sim \mr{Uniform}([-h, h]^d)$ and
\begin{align*}
m_{\Lambda}(\mbf{n}) := \mb{E}\left[e^{2\pi i \mbf{n} \cdot U_z}\right] = \prod_{\ell = 1}^d \frac{\sin(2\pi h n_\ell)}{2\pi h n_\ell}.
\end{align*}

\noindent Since there exists some $\ell_* = \ell_*(\mbf{n}) \le d$ such that $|n_{\ell_*}| h = \|\mbf{n}\|_{\infty} h \ge 2^{N_{\Lambda}} h = 1$,
we have
\begin{align*}
|m_{\Lambda}(\mbf{n})| \le \frac{|\sin(2 \pi n_{\ell_*} h)|}{|2 \pi n_{\ell_*} h|} \le \frac{1}{2\pi} \qquad \forall \mbf{n} \in \Lambda.
\end{align*}

Let us define $\widetilde{S}_{k, \mbf{c}}(\mbf{n}) := \sum_{z \in \mc{I}_k(\mbf{c})} \widetilde{M}_{k, z}^{\psi}(\mbf{n})$ as well as
\begin{align*}
\widehat{S}_{k, \mbf{c}}(\mbf{n}) &:= \sum_{z \in \mc{I}_k(\mbf{c})} \left[e^{-2\pi i \mbf{n} \cdot U_z} - m_{\Lambda}(\mbf{n})\right]M_{k, z}^{\psi}(\mbf{n}),
\qquad R_{k, \mbf{c}}(\mbf{n}) := \sum_{z \in \mc{I}_k(\mbf{c})} e^{-2\pi i \mbf{n} \cdot U_z}E_{k, z}(\mbf{n})
\end{align*}

\noindent so that  $\widetilde{S}_{k, \mbf{c}}(\mbf{n}) = \widehat{S}_{k, \mbf{c}}(\mbf{n}) + m_{\Lambda}(\mbf{n})S_{k, \mbf{c}}(\mbf{n}) + R_{k, \mbf{c}}(\mbf{n})$.
By the conditional distributional equality,
\begin{align*}
\mb{E}&\left[\max_{\mbf{n} \in \Lambda}\big|S_{k, \mbf{c}}(\mbf{n})\big|^r \ind_{\mc{G}_{\Lambda}}\right]
= \mb{E}\left[\max_{\mbf{n} \in \Lambda}\big|\widetilde{S}_{k, \mbf{c}}(\mbf{n})\big|^r \ind_{\mc{G}_{\Lambda}}\right]\\
& \le \mb{E}\left[\max_{\mbf{n} \in \Lambda}\big|\widehat{S}_{k, \mbf{c}}(\mbf{n})\big|^r \ind_{\mc{G}_{\Lambda}}\right]
+ \mb{E}\left[\max_{\mbf{n} \in \Lambda}\left|R_{k, \mbf{c}}(\mbf{n})\right|^r \ind_{\mc{G}_{\Lambda}}\right]
+ \left(\max_{\mbf{n} \in \Lambda} |m_{\Lambda}(\mbf{n})|\right)^r 
\mb{E}\left[\max_{\mbf{n} \in \Lambda}\big|S_{k, \mbf{c}}(\mbf{n})\big|^r \ind_{\mc{G}_{\Lambda}}\right]\\
& \le \frac{1}{1- (2\pi)^{-r}}
\left\{
\mb{E}\left[\max_{\mbf{n} \in \Lambda}\big|\widehat{S}_{k, \mbf{c}}(\mbf{n})\big|^r \ind_{\mc{G}_{\Lambda}}\right]
+ \mb{E}\left[\max_{\mbf{n} \in \Lambda}\left|R_{k, \mbf{c}}(\mbf{n})\right|^r \ind_{\mc{G}_{\Lambda}}\right]
\right\}.
\end{align*}

\noindent It is straightforward to check that
\begin{align*}
\sup_{\mbf{n} \in \Lambda} |M_{k, z}^{\psi}(\mbf{n})| \le M(Q \cap \mr{supp}(\psi_{k, z}))
\qquad \text{ and }\qquad 
\sum_{z \in \mc{I}_k(\mbf{c})} M(Q \cap \mr{supp}(\psi_{k, z}))^\beta \lesssim_{d, \beta} V_{2^k}^\beta(Q)
\end{align*}

\noindent where the implicit constants are deterministic.
Using Corollary \ref{cor:high-moment-estimate},
\begin{align*}
& \mb{E}\left[\max_{\mbf{n} \in \Lambda}\big|\widehat{S}_{k, \mbf{c}}(\mbf{n})\big|^r \ind_{\mc{G}_{\Lambda}}\right]
\le \mb{E}\left[\mb{E}\left[\max_{\mbf{n} \in \Lambda}\big|\widehat{S}_{k, \mbf{c}}(\mbf{n})\big|^r \bigg| \mc{F}_{\infty}\right]\right]\\
& \quad \lesssim \ell(e|\Lambda|)^{r(1-\alpha)}\mb{E}\left[\left(\sum_{z \in \mc{I}_k(\mbf{c})} M(Q \cap \mr{supp}(\psi_{k, z}))^\beta \right)^{r\alpha}\right]
\lesssim \ell(e|\Lambda|)^{r(1-\alpha)}\mb{E}\left[V_{2^k}^{\beta}(Q)^{r\alpha}\right]
\end{align*}

\noindent for $r \in (0, 1)$, and this is $O\left(\ell(e|\Lambda|)^{r(1-\alpha)}N_{\Lambda}^{-r/2}\right)$ by Corollary \ref{cor:uniform-integrability}.
Meanwhile, 
\begin{align*}
\mb{E}\left[\max_{\mbf{n} \in \Lambda}\left|R_{k, \mbf{c}}(\mbf{n})\right|^r \ind_{\mc{G}_{\Lambda}}\right]
&\lesssim \mb{E}\left[\mc{E}_{k, h}^r \ind_{\mc{G}_{\Lambda}}\right]
\end{align*}

\noindent by Lemma \ref{lem:error-random-shift}, and this is bounded by
\begin{align}\label{eq:last-bound}
\exp\left(Cr\sqrt{hk2^k}\right) \mb{E}\left[M(\partial_h Q)^r\right] 
+ \left[\frac{h}{b_k} + \exp\left(C\sqrt{hk2^k}\right) - 1\right]^r \mb{E}\left[M(Q)^r\right].
\end{align}

\noindent Combining the estimate for $\mb{E}\left[M(\partial_h Q)^r\right]$ in Corollary \ref{cor:boundary-momentGMC}
with the fact that $hk2^k = O(N_{\Lambda}2^{-N_{\Lambda} / 2})$ and $h/b_k = O(2^{-N_{\Lambda} / 2})$,
we see that \eqref{eq:last-bound} is exponentially small in $N_{\Lambda}$.
Since all the bounds are uniform in $\mbf{c}$ and there are only $5^d$ modulo classes,
we can substitute all the estimates back into \eqref{eq:pf-fourier-moment} and conclude our proof.
\end{proof}



\bibliographystyle{abbrv}
\bibliography{ref}

\end{document}